\keywords{\textsc{free-group, subgroups, lattice of subgroups, Stallings automata, complement, rank, corank, join}}
\begin{document}

\title{On the lattice of subgroups of a free group: complements and rank}

\author[J.~Delgado]{Jordi Delgado}	%required
\address{Centro de Matemática, Universidade do Porto, Portugal}	%required
\email{jdelgado@crm.cat}  %optional
%\thanks{thanks 1, optional.}	%optional

\author[P.~V.~Silva]{Pedro~V.~Silva}	%optional
\address{Centro de Matemática, Universidade do Porto, Portugal}	%optional
\email{pvsilva@fc.up.pt}  %optional
%\thanks{thanks 2, optional.}	%optional

%\author[C.~Name3]{Carla Name3}	%optional
%\address{address 3}	%optional
%\urladdr{name3@url3\quad\rm{(optionally, a web-page can be specified)}}  %optional
%\thanks{thanks 3, optional.}	%optional

%% etc.

%% required for running head on odd and even pages, use suitable
%% abbreviations in case of long titles and many authors:

%%%%%%%%%%%%%%%%%%%%%%%%%%%%%%%%%%%%%%%%%%%%%%%%%%%%%%%%%%%%%%%%%%%%%%%%%%%

%% the abstract has to PRECEDE the command \maketitle:
%% be sure not to issue the \maketitle command twice!

\begin{abstract}
  %\noindent
  A $\join$-complement of a subgroup $H \leqslant \Fn$ is a subgroup $K \leqslant \Fn$ such that $H \join K = \Fn$.
    If we also ask $K$ to have trivial intersection with $H$, then we say that $K$ is a \mbox{$\oplus$-complement} of $H$.
    %The minimum possible ranks for complements are called coranks and denoted by $\jcrk(H)$ and $\dcrk(H)$ respectively.
    The minimum possible rank of a $\join$-complement (\resp $\oplus$-complement) of $H$ is called the $\join$-corank (\resp $\oplus$-corank) of $H$.
    We use Stallings automata to study these notions and the relations between them. In particular, we characterize when complements exist, compute the $\join$-corank, and provide language-theoretical descriptions of the sets of cyclic complements.
    Finally, we prove that the two notions of corank coincide on subgroups that admit cyclic complements of both kinds.
\end{abstract}

\maketitle

%% start the paper here:
\section{Introduction}

Subgroups of free groups are complicated. Of course not the structure of the subgroups themselves (which are always free, a classic result by Nielsen and Schreier) but the relations between them, or more precisely, the lattice they constitute. A first hint in this direction is the fact that (free) subgroups of any countable rank appear as subgroups of the free group of rank $2$
%$\Free[2]$
(and hence of any of its noncyclic subgroups) giving rise to a self-similar structure.

This scenario quickly provides challenging questions involving ranks. Classical examples include intersections
of finitely generated subgroups, and subgroups of fixed points of automorphisms; both proved to be finitely generated in the second half of last century (see~\cite{HowsonIntersectionFinitelyGenerated1954} and
\cite{GerstenFixedPointsAutomorphisms1987} respectively),
and both having a long and rich subsequent history in the quest for bounds for those finite ranks (see
\cite{FriedmanSheavesGraphsTheir2015,
MineyevSubmultiplicativityHannaNeumann2012}
and~\cite{BestvinaTrainTracksAutomorphisms1992} repectively).

% Also, since the rank of a subgroup can be both increased or decreased by adding new generators, it makes sense to ask how far can we go in either direction. Namely, which are the (upper and lower) bounds for the rank of the subgroups one can reach by extending a given set of generators.

In this paper we shall be mainly concerned with a kind of dual of the previous problem: given a finitely generated subgroup $H$ of $\Fn$, what is the minimum number of generators that must be added to $H$ in order to generate the full group~$\Fn$?
What happens if the added subgroup is also required to intersect trivially with $H$?
These numbers, called respectively the \emph{join} ($\join$) and \defin{direct ($\oplus$) coranks} of $H$, shall be investigated using Stallings automata.

Although previously studied using other techniques, maybe the most enlightening approach to subgroups of the free group $\Fn$ was their geometric interpretation as covering spaces of the bouquet of $n$ circles.
It soon became clear that the (mainly topological) original viewpoint by Serre and Stallings
(see~\cite{SerreTrees1980,
StallingsTopologyFiniteGraphs1983})
admitted an appealing restatement in terms of automata
(see~\cite{KapovichStallingsFoldingsSubgroups2002, BartholdiRationalSubsetsGroups2010}).
We briefly summarize this modern approach
%to Stallings theory
in Section~\ref{sec: Stallings automata}.
Furthermore, similarly to the strategy followed in \cite{SilvaAlgorithmDecideWhether2008}
and
\cite{PuderPrimitiveWordsFree2014}
to deal with free factors, we highlight the role played by identifications of vertices in the Stallings automaton.

In Section~\ref{sec: complements and coranks}, we introduce the various notions of complement and corank studied in this paper.

Section~\ref{sec: join} is devoted to join complements and join corank. We discuss
%\R{all}
the possible combinations between rank and join corank, show that the latter is always computable
(a result previously proved by Puder in \cite{PuderPrimitiveWordsFree2014}),
and prove that the set of join cocycles is always rational. We note that, from a language theoretical viewpoint, proving that a set of solutions is rational is highly desirable: in view of all the closure properties satisfied by rational languages, this allows for an efficient search for solutions of particular types.

In Section~\ref{sec: meet}, the simpler case of
meet complements
is discussed. Existence is essentially determined by the subgroup having finite or infinite index, and this will be useful for the discussion of direct complements.

Section~\ref{sec: direct} contains the main results of the paper, devoted to the harder case of direct complements and coranks. Existence is shown to depend on the index of the subgroup only, and the possible combinations between rank and direct corank turn out to be the same as in the join case. The set of cyclically reduced direct cocycles is also rational, but the set of direct cocycles needs not to be: in general, it is only context-free (the next level above rational, in the classical Chomsky's hierarchy from language theory). We also prove that the concepts of join cocyclic and direct cocyclic coincide, and raise the natural question: do join corank and direct corank coincide in the general case?

We finally point out that Stallings' geometric interpretation converts the corank problems into problems about equations in automata
(that, is equations between automata that include arcs labelled by variable strings). We note that this is a very appealing general problem that, in particular, includes that of equations in the free group which has become one of the main topics in modern group theory
(see~\cite{
MakaninEquationsFreeGroups1982,
RazborovSystemsEquationsFree1984,
JezRecompressionSimplePowerful2013}).

\section{Preliminaries}

Throughout the paper we assume that $A = \set{a_1,\ldots,a_n}$ is a finite set of \defin{letters} that we call an \defin{alphabet},
and we denote by $A^{\!*}$ the \defin{free monoid} on $A$ (consisting of all finite words on $A$ including the \defin{empty word},
which is denoted by $\emptyword$).

The subsets of $A^{\!*}$ are called languages (over $A$), or $A$-languages.
An $A$-language is said to be \defin{rational} if it
can be  obtained from the letters of $A$ using the operators union, product and star (submonoid generated by a language) finitely many times. A direct consequence of a fundamental theorem of Kleene (see for example \cite[Theorem 2.1]{SakarovitchElementsAutomataTheory2009})
is that the set of rational $A$-languages is closed under finite intersection and complement.

We also denote by $\Fn$ the \defin{free group} with basis $A$; that is, $\Fn = \Free[A] =  \pres{A}{-}$.
More precisely, we denote by~$A^{-1}$ the \defin{set of formal inverses} of~$A$. Formally, $A^{-1}$ can be defined as a set $A'$ equipotent and disjoint with $A$, together with a bijection $\varphi \colon A \to A'$; then, for every $a \in A$, we call $a \varphi$ the \defin{formal inverse} of $a$, and we write~$a^{-1} = a \varphi$. So $A^{-1} = \{ a^{-1} \st a \in A \}$, and $A \cap A^{-1} = \varnothing$.

The set $A^{\pm} = A \sqcup A^{-1}$,
called the \defin{involutive closure} of $A$,
is then equipped with an involution $^{-1} %\colon A^{\pm} \to A^{\pm}
$ (where $(a^{-1})^{-1} = a$), which can be extended to
%the free involutive monoid
$(A^{\pm})^*$
in the natural way: $(a_1\ldots a_n)^{-1} = a_n^{-1}\ldots a_1^{-1}$ for all $a_1, \ldots,a_n \in A^{\pm}$. An alphabet is called \defin{involutive} if it is the involutive closure of some other alphabet.

A word in $(A^{\pm})^{*}$ is said to be \defin{(freely) reduced} if it contains no consecutive mutually inverse letters (\ie it has no factor of the form $a a^{-1}$,
where $a \in A^{\pm}$).
It is well known that
the word obtained from $w \in (A^{\pm})^{*}$ by successively removing pairs of consecutive inverse letters is unique; we call it the \defin{free reduction} of $w$, and we denote it by $\red{w}$.
Similarly, we write $\red{S} = \set{\red{w} \st w \in S}$, for any subset $S \subseteq (A^{\pm})^{*}$,
and we denote by $\Red_A$ (or $\Red_n$) the set of reduced words in $(A^{\pm})^{*}$, that is the rational language
\begin{equation*}
\Red_A
\,=\,
\red{(A^{\pm})^{*}}
\,=\,
(A^{\pm})^{*}
\setmin
\textstyle{
\bigcup_{a \in A^{\pm}} (A^{\pm})^{*} a a^{-1} (A^{\pm})^{*}.
}
\end{equation*}
In a similar vein, a word in $A^{\pm}$ is said to be \defin{cyclically reduced} if all of its cyclic permutations are reduced
(that is, if it is reduced and its first and last letters are not inverses of each other). The \defin{cyclic reduction} of a word $w \in \Fn$,
denoted by $\cred{w}$,
is obtained after iteratively removing from $\red{w}$ the first and last letters if they are inverses of each other.
We also extend this notation to subsets, and denote by $\Cred_A$ (or $\Cred_n$) the set of all cyclically reduced words in $(A^{\pm})^{*}$, that is the rational language
\begin{equation*}
\Cred_A
\,=\,
\cred{(A^{\pm})^{*}}
\,=\,
(A^{\pm})^{*}
\setmin
\textstyle{
\bigcup_{a \in A^{\pm}}
\left(
(A^{\pm})^{*} a a^{-1} (A^{\pm})^{*}
\cup
a (A^{\pm})^{*} a^{-1}
\right)}.
\end{equation*}
The free group $\Free[A]$ (with basis $A$) can then be thought as the set
%$\red{(A^{\pm})^{*}}$
of reduced words in $A^{\pm}$ with the operation consisting of ``concatenation followed by reduction''. We recall that Benois' Theorem (see~\cite{BenoisPartiesRationnellesGroupe1969}) allows us to understand the rational subsets of $\Free[A]$ as reductions of rational $A^{\pm}$-languages.

\begin{defi}
    The \defin{rank of a group} $G$, denoted by~$\rk (G)$, is the smallest cardinality of a generating set for $G$.
\end{defi}

It is well known that the free group $\Fn$ has rank $n$, and that every subgroup of $\Fn$ is again free and can have any countable rank if $n\geq 2$.
We will see in Theorem~\ref{thm: Stallings bijection} that one can biunivocally assign to every subgroup $H \leqslant \Fn$ a geometric object
--- called the \emph{Stallings automaton} $\stallings{H}$ of $H$ --- which provides a lot of useful information about the subgroup. What is more, if $H$ is given by a finite family of generators, then $\stallings{H}$ is fastly computable (see~\cite{TouikanFastAlgorithmStallings2006}), and many algorithmic results regarding subgroups of the free group follow smoothly from the Stallings construction.

For example,
we shall see that
the rank of a subgroup $H$ of $\Fn$ is precisely the (graph) rank of $\stallings{H}$; hence one can always compute the rank of a finitely generated subgroup $H$ of $\Fn$ from a finite family of generators.

One of the goals in this paper is to to obtain an analogous result for the join corank of $H$, that is, the minimum number of elements that must be added to $H$ in order to generate the whole group~$\Fn$
(see Theorem~\ref{thm: join-corank is computable}).

\subsection{Graphs, digraphs, and automata}

As we have already mentioned, we will use automata (that is essentially labelled digraphs) to describe subgroups of the free group. Since parallel arcs and loops are allowed, we shall define digraphs in the sense of Serre.

\begin{defi} \label{def: digraph}
  A \defin{directed multigraph} (or \defin{digraph} for short) is a tuple $\dGri = (\Verts,\Edgi, \init, \term$), where
  %\begin{enumerate*}%[ind]
     $\Verts$ is a nonempty set
     (called the set of \defin{vertices} of $\dGri$),
     $\Edgi$ is a set
     (called the set of \defin{arcs} or \defin{directed edges} of $\dGri$), and
     $\init,\term \colon \Edgi \to \Verts$ are (resp.~initial and final) \defin{incidence functions}.
  %\end{enumerate*}
Then, for each arc $\edgi \in \Edgi$, we say that $\edgi$ is \defin{incident} to $\init(\edgi)$ and $\term(\edgi)$, which are called the \defin{origin} (or \defin{initial vertex}), and \defin{end} (or \defin{final vertex}) of $\edgi$, respectively. Two vertices are said to be \defin{adjacent} if there exists an edge incident to both of them, and
two edges are said to be
\defin{incident} if some vertex is incident to both of them.

We denote by $\Verts \dGri$ and $\Edgi \dGri$ the respective sets of vertices and arcs of a digraph~$\dGri$.
A digraph $\dGri
%= (\Verts,\Edgi, \init, \term)
$ is called \defin{finite}
%(resp.~\emph{countable})
if the cardinal
$\card{(\Verts \dGri \sqcup \Edgi \dGri)}$
is finite.
%(resp.~countable).

Note that no incidence restrictions have been applied; in particular
we are allowing both the possibility of arcs having the same vertex as origin and end (called \defin{directed loops}), and of different arcs sharing the same origin and end (called \defin{parallel arcs}).
\end{defi}

\begin{defi}
   A \defin{walk} in a digraph $\dGri$ is a finite alternating sequence
   $\walki = \verti_0 \edgi_1 \verti_1 \ldots \edgi_{n} \verti_{n}$ of successively incident
   vertices and arcs
   in $\dGri$
   (more precisely
   $\init{\edgi_i} = \verti_{i-1}$ and $\term{\edgi_i} =
   \verti_{i}$ for $i = 1,\dots,n$).
   We then say that $\walki$ goes from $\verti_0$ to $\verti_n$
   --- or that $\walki$ is a \defin{$(\verti_0,\verti_n)$-walk} --- and we write $\walki \colon \verti_0 \xleadsto{} \verti_n$. If the first and last vertices of $\walki$ coincide then we say that $\walki$ is a \defin{closed walk}.
   A closed walk from $\verti$ to $\verti$
   %(that is a $(\verti,\verti)$-walk)
   is called a \defin{$\verti$-walk}.
   The \defin{length of a walk} is the number of arcs in the sequence. The walks of length $0$, called \defin{trivial walks}, correspond precisely to the vertices in $\dGri$.
\end{defi}

\begin{defi} \label{def: pointed automaton}
  Given an alphabet $A$,
  a
  \defin{(pointed)
  $A$-automaton}
  $\Ati = (\Verti,\Edgi, \init, \term, \lab, \bp)$  %= (\Gamma,\vertexb)$
  is a
  digraph $\dGri = (\Verti,\Edgi, \init, \term)$
  called the \defin{underlying digraph}
  %or \defin{directed support}
  of $\Ati$,
  together with a labelling on the arcs $\lab\colon \Edgi \to A$, and a distinguished vertex~\bp\ called the \defin{base vertex} or \defin{basepoint} of $\Ati$.
\end{defi}

\begin{rem}
    In the general automata setting, pointed automata correspond to automata having a unique \emph{coincident} initial and final state.
\end{rem}

A vertex $\verti$ in an $A$-automaton is said to be \defin{complete} if for every letter $a \in A$ there is (at least) one $a$-arc with origin $\verti$.
An automaton is \defin{complete} if all its vertices are complete.
Otherwise
(if there exists a vertex $\verti$ and a letter $a$ such that there is no $a$-arc with origin $\verti$), we say that both the vertex $\verti$ and the automaton are $a$-\defin{deficient}.
A vertex is said to be an
\defin{$a$-source}
if it is the origin of one single arc, and this arc is labelled by $a$. An automaton whose basepoint is an $a$-source is also called an $a$-source.

An $A$-automaton
is said to be \defin{deterministic}
if no two arcs with the same label depart from the same vertex.
Hence, a deterministic automaton is complete if and only if
every letter induces a total transformation on the vertex set.

\begin{defi} \label{def: label of a walk}
The \defin{label of a walk} $\walki$ in an $A$-automaton $\Ati
= (\Verts,\Edgi, \init, \term, \lab, \bp)
$ is defined to be
\begin{equation*}
  \lab(\walki) =
  \left\{\!
  \begin{array}{lll}
       \emptyword \, , & \text{if } \walki = \verti \, , &\text{(an empty walk  in $\Ati$)},\\
       \lab(\edgi_1) \cdots \lab(\edgi_k) \, , & \text{if } \walki = \verti_0 \edgi_1 \verti_1 \ldots \edgi_k \verti_k &\text{(a nonempty walk  in $\Ati$)\,.}
  \end{array}\right.
\end{equation*}
We then say that $\walki$ \defin{reads} or \defin{spells} the word $\lab(\walki) \in A^{*}$, and that the word $\lab(\walki)$ \defin{labels} the walk $\walki$.
We also write
$\lab(\Ati) = \set{\lab(\edgi) \st \edgi \in \Edgi} \subseteq A$
(the subset of letters appearing as labels of arcs in $\Ati$).
If $\Verti$ and $\Vertii$ are subsets of vertices in a labelled digraph $\Ati$, then we denote by $\lang[\Verti \Vertii]{\Ati}$
the set of words read by walks from vertices in $\Verti$ to vertices in $\Vertii$.
In particular, if $\verti,\vertii$ are vertices of $\Ati$, then
$\lang[\verti \vertii]{\Ati} = \set{\,\lab(\walki) \st \walki \colon \verti \xleadsto{\ } \vertii \text{ in } \Ati \,}$.
In view of the well-known Kleene's Theorem the languages $\lang[\Verti\Vertii]{\Ati}$ are rational if $\Ati$ is finite.

The set of words read by $\bp$-walks in an $A$-automaton $\Ati$ is called the \defin{language recognized} by $\Ati$, and is denoted by $\lang{\Ati}$; that is $\lang{\Ati} = \lang[\bp\bp]{\Ati}$. It is clear that $\lang{\Ati}$ is a submonoid of the free monoid $A^{\!*}$.
\end{defi}

\begin{defi} \label{def: involutive automaton}
 An \defin{involutive $A$-automaton}
 is an
  $A^{\pm}$-automaton
  together with an involution
  %$^{-1}\colon \Edgi \to \Edgi$,
  $\edgi \mapsto \edgi^{-1}$ on its arcs (called \defin{inversion of arcs}) such that:
  \begin{enumerate}[ind]
    \item No arc is the inverse of itself (\ie $\edgi^{-1} \neq \edgi$, for every $\edgi \in \Edgi$).
    \item Inverse arcs are reversed (\ie $\init{\edgi^{-1}} = \term{\edgi}$, for every $\edgi \in \Edgi$).
    \item Arc inversion is compatible with label inversion (\ie $\lab(\edgi^{-1}) = \lab(\edgi)^{-1}$, for every~${\edgi \in \Edgi}$).
  \end{enumerate}
  % Labelled graph in the sense of Serre.
\end{defi}
Thus, in an involutive automaton, for every labelled arc $\edgi \equiv \verti \xarc{\ } \vertii $ reading $a \in A$, there always exists a  reversed arc
  %with inverse label
  $\edgi^{-1} \equiv \verti \xcra{\ } \vertii$ reading $a^{-1}$ (called the \defin{inverse} of $\edgi$), so that arcs appear by pairs as shown in Figure~\ref{fig: involutive arc}.

\begin{figure}[!htb]
\centering
\begin{tikzpicture}[shorten >=3pt, node distance=.3cm and 2cm, on grid,auto,>=stealth']
   \node[state] (0) {};
   \node[state] (1) [right = of 0] {};
   %\node[] (.) [right = 0.3 of 1] {.};
   \path[->]
        ([yshift=0.3ex]0.east) edge[]
            node[pos=0.5,above=-.2mm] {$a$}
            ([yshift=0.3ex]1)
   ([yshift=-0.3ex]1.west) edge[dashed]
            node[pos=0.5,below=-.2mm] {$a^{-1}$}
            ([yshift=-0.3ex]0);
\end{tikzpicture}
\vspace{-5pt}
\caption{Arcs in an involutive automaton}
\label{fig: involutive arc}
\end{figure}

A walk in an involutive automaton is said to be \defin{reduced} if it has no two consecutive inverse arcs.

An arc in an involutive $A$-automaton is said to be \defin{positive} (resp.\ \defin{negative})
if it is labelled with a letter in~$A$ (resp.~$A^{-1}$).
We respectively denote by~$\Edgi^{+} \Ati$ and $\Edgi^{-} \Ati$
the sets of positive
and negative
arcs in an involutive automaton~$\Ati$.
The \defin{positive representation} of an $A$-involutive automaton $\Ati$ is the $A$-automaton obtained after removing all the negative arcs from $\Ati$.

\begin{rem}
  An involutive automaton is fully characterized by its positive representation
  (with the tacit assumption that every positive arc,
  say reading $a\in A$,
  is allowed to be crossed backwards reading the inverse label $a^{-1}$).
\end{rem}

  \begin{defi}
  The \defin{%(undirected)
  underlying graph}
%   (or \defin{undirected support})
  of an involutive automaton $\Ati$ is the undirected multigraph, denoted by $\Gamma$\,, obtained by
  identifying all the pairs of respectively inverse arcs in the underlying digraph of $\Ati$. Note that this is the same as `forgetting the labels and direction' in the positive %(or negative)
  representation of $\Ati$.
\end{defi}

\begin{rem}
  Any undirected multigraph $\Gri$ can be seen as the underlying graph of some involutive automaton; an edge in $\Gamma$ is then an unordered pair $\set{\edgi,\edgi^{-1}}$.
  We shall refer to undirected multigraphs simply as graphs (in contraposition to digraphs, introduced in Definition~\ref{def: digraph}).
\end{rem}

\begin{conv}
If not stated otherwise, the automata appearing in this paper hereinafter will be assumed to be pointed and involutive,
and we shall represent them by their positive part.
It is clear that the language recognized by such an automaton is the same as the language recognized by the connected component containing the basepoint.
With this convention, an (involutive) $A$-automaton $\Ati$ is complete if and only if (in the positive representation of~$\Ati$), for every vertex $\verti$ and every positive letter $a \in A$, there is an $a$-arc starting at $\verti$ and an $a$-arc ending at $\verti$.
\end{conv}

 \begin{defi} \label{def: graph rank}
The \defin{(cycle) rank} of a
%undirected
graph $\Gri$, denoted by~$\rk(\Gri)$, is the minimum number of edges that must be removed from $\Gri$ to obtain a forest (\ie to break all the cycles in~$\Gri$).
\end{defi}
It is well known (see for example~\cite{BergeTheoryGraphs2001}) that, if $\Gri$ is finite, then
\begin{equation} \label{eq: graph rank}
    \rk (\Gri)
    \,=\,
    e - v + c \,,
\end{equation}
where $e$,$v$, and $c$ are respectively the number of edges, vertices, and connected components in $\Gri$.

The previous considerations make it possible (and convenient) to extrapolate graph-theoretical notions to involutive automata from their underlying graphs.
For example, the \defin{rank}, \defin{connectivity}, or \defin{vertex degree} of an automaton $\Ati$ are defined in terms of the homonymous notions in its underlying graph.
In the same vein, we will call an involutive automaton a \defin{path}, a \defin{cycle}, a \defin{tree}, or a \defin{spanning tree} if their underlying graph is~so.

\begin{rem}
If an (involutive) automaton $\Ati$ is deterministic, then a walk $\walki$ in $\Ati$ is reduced if and only if its
label $\ell(\walki)$ is reduced.
\end{rem}

\begin{defi}
The \defin{reduced label} of a walk $\walki$ in an involutive $A$-automaton is
defined to be
$\red{\lab}(\walki) = \red{\lab(\walki)} \in \Free[A]$.
We write
$\rlang[\Verti \Vertii]{\Ati} =
\red{\lang[\Verti \Vertii]{\Ati}}$.
The set of reduced labels of $\bp$-walks in $\Ati$ is a subgroup of $\Free[A]$ called the \defin{subgroup recognized} by $\Ati$, and is denoted by $\gen{\Ati}$, so that
$\gen{\Ati} = \rlang{\Ati} \leqslant \Free[A]$.
\end{defi}

\begin{defi}
    An
    %(pointed and involutive)
    automaton $\Ati$ is said to be  \defin{core} if every vertex appears in some \mbox{$\bp$-walk} with reduced label.
\end{defi}
    Note that, if $\Ati$ is deterministic, this is the same as $\Ati$ being connected and not having ``hanging trees'' not containing the basepoint (we speak of {\em hanging trees} when we have a tree adjoined to a vertex of degree strictly greater than $1$ in a graph). Accordingly, we define the \defin{core} of a deterministic automaton~$\Ati$, denoted by~$\core(\Ati)$, to be the
    %\defin{pruned} $\bp$-component of $\Ati$ (\ie the
    automaton obtained after taking the basepoint component of $\Ati$ and removing from it all the hanging trees not containing the basepoint. Note that we then have that $\gen{\core (\Ati)} = \gen{\Ati}$.

\begin{defi}
    An
    involutive pointed
    automaton is said to be \defin{reduced} if it is deterministic and core (and hence connected).
\end{defi}

\begin{defi}
A \defin{subautomaton} of an
%(pointed and involutive)
automaton $\Ati$ is any
%(pointed and involutive)
automaton
obtained from $\Ati$ through restriction
(of vertices or arcs).
%$\Atii$ obtained after successively removing  from $\Ati$, either arcs or isolated vertices different from the basepoint.
We then write $\Atii \leqslant \Ati$
(or $\Atii < \Ati$, if $\Atii \neq \Ati$,
that is if $\Atii$ is a \defin{strict subautomaton} of $\Ati$).
\end{defi}

\subsection{Operations on automata}

Throughout this paper, two main kinds of operations on automata appear, namely vertex and arc identification.

\begin{defi}
Let $\Ati$ be an $A$-automaton
and let $\Verti$ be a nonempty subset of vertices of~$\Ati$.
The
\defin{quotient of $\Ati$ by $\Verti$},
denoted by $\qt{\Ati}{\Verti}$,
is
defined to be
the automaton obtained after identifying
in $\Ati$
the vertices in $\Verti$
(and inheriting the adjacencies,
the labelling,
and the initial and terminal vertices from $\Ati$).
\end{defi}

\begin{rem}
Note that
$\card {\Verts(\qt{\Ati}{\Verti})} = \card {\Verts\Ati} - {\card \Verti} + 1$,
and
$\card {\Edgs (\qt{\Ati}{\Verti})} = \card {\Edgs\Ati}$. Hence,
\begin{equation} \label{eq: rank graph quotient}
    \rk \Ati
    \,\leq\,
    \rk \qt{\Ati}{\Verti}
    \,\leq\,
    \rk \Ati + \card {\Verti} - 1 \,,
\end{equation}
where the lower (\resp upper) bound in \eqref{eq: rank graph quotient} corresponds to the case where all the vertices in $\Verti$ belong to different (\resp the same) connected components of~$\Ati$. In particular, identification of vertices can never decrease the rank of an automaton.

If $\Verti$ is finite, the exact rank of $\qt{\Ati}{\Verti}$ is immediately obtained from the case of two vertices
(denoted by
$\qt{\Ati}{\verti \shorteq \vertii}
=
\qt{\Ati}{\set{\verti,\vertii}}$),
which is detailed below.
\end{rem}

\begin{lem}
Let $\verti,\vertii$ be two different vertices in $\Ati$, then:
\begin{equation*} \label{eq: rk Ati + w}
    \rk \qt{\Ati}{\verti \shorteq \vertii}
    \,=\,
    \left\{ \!
    \begin{array}{ll}
            \rk \Ati +1 &  \text{if $\verti$ and $\vertii$ are connected in $\Ati$,} \\
            \rk \Ati  &  \text{if $\verti$ and $\vertii$ are not connected in $\Ati$.}
    \end{array}
    \right.
\end{equation*}
\end{lem}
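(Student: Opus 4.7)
The plan is to apply the graph rank formula \eqref{eq: graph rank}, namely $\rk(\Gri) = e - v + c$, to both $\Ati$ and $\qt{\Ati}{\verti \shorteq \vertii}$ and compare the two expressions. This reduces the whole statement to accounting for how the three parameters $e$ (edges), $v$ (vertices) and $c$ (connected components) behave under the identification of two distinct vertices.

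First I would observe that identifying $\verti$ and $\vertii$ does not modify the multiset of arcs, so the number of edges of the underlying graph is preserved: $e(\qt{\Ati}{\verti \shorteq \vertii}) = e(\Ati)$. The vertex set, however, loses exactly one element, giving $v(\qt{\Ati}{\verti \shorteq \vertii}) = v(\Ati) - 1$. The only quantity whose behaviour is sensitive to the connectivity hypothesis is~$c$.

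For the number of connected components I would split into two cases. If $\verti$ and $\vertii$ lie in the same component of $\Ati$, then fusing them merges a component with itself, so $c(\qt{\Ati}{\verti \shorteq \vertii}) = c(\Ati)$; plugging into~\eqref{eq: graph rank} gives
\begin{equation*}
    \rk \qt{\Ati}{\verti \shorteq \vertii} \,=\, e - (v-1) + c \,=\, \rk \Ati + 1.
\end{equation*}
If instead $\verti$ and $\vertii$ lie in different components, the identification glues those two components into one and leaves the rest untouched, so $c(\qt{\Ati}{\verti \shorteq \vertii}) = c(\Ati) - 1$, whence
\begin{equation*}
    \rk \qt{\Ati}{\verti \shorteq \vertii} \,=\, e - (v-1) + (c-1) \,=\, \rk \Ati.
\end{equation*}

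There is essentially no obstacle here: once \eqref{eq: graph rank} is available, the argument is just bookkeeping. The only point deserving a brief justification is that the underlying graph of $\qt{\Ati}{\verti \shorteq \vertii}$ is indeed obtained from that of $\Ati$ by the usual graph-theoretic vertex identification (so that \eqref{eq: graph rank} applies), which follows immediately from the definition of quotient automaton and the remark that vertex identification preserves both the edge set and the incidences up to the fusion $\verti \shorteq \vertii$. Consistency with the general bounds in~\eqref{eq: rank graph quotient} (for $\card{\Verti} = 2$) provides a sanity check on both cases.
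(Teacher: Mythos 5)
Your proof is correct and takes exactly the route the paper intends: the lemma is stated there without proof, but the remark immediately preceding it already records that vertex identification preserves the number of edges and drops the number of vertices by one, so everything reduces to the same case analysis on the number of connected components via \eqref{eq: graph rank} that you carry out. The only caveat worth noting is that \eqref{eq: graph rank} is stated for finite graphs, so strictly speaking your computation covers the finite case (the one the paper actually uses); for infinite $\Ati$ one would argue instead with maximal forests, where the same dichotomy appears as creating one new independent cycle versus merely merging two trees.
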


Hence, if $\Ati$ is finite, the identification of two vertices either increases the rank exactly by one (if they are connected) or keeps the rank equal (if they are disconnected).

\begin{defi}
 Let $\Ati$ and $\Atii$ be $A$-automata. The
 \defin{sum}
 of $\Ati$ and $\Atii$, denoted by $\Ati + \Atii$,
 is then the automaton obtained after identifying the basepoints of $\Ati$ and $\Atii$.
\end{defi}

It is clear that the sum of automata is associative and commutative.
We write $\sum_{i=1}^{n} \Ati_{\!i} = \Ati_{\!1} + \ldots + \Ati_{\!n} $.
Then,
it is also clear that
$\rk (\sum_i \Ati_{\!i}) = \sum_i \rk(\Ati_{\!i})$,
and that
$\Gen{\sum_i \Ati_{\!i}} =
\Gen{\bigcup_i \,\gen{\Ati_{\!i}}} =
%\R{\sum_i \gen{\Ati_{\!i}}}
\bigvee_{\!i} \gen{\Ati_{\!i}}$.

Regarding arcs, we shall only be interested in a very specific kind of identification.
\begin{defi}
A \defin{folding} in an automaton
$\Ati$
is the identification of two different arcs
in $\Ati$
with the same origin and label
(inheriting the adjacencies,
the labelling,
and the basepoint from $\Ati$).
A folding is said to be
\defin{closed} if the identified arcs are parallel
and \defin{open} otherwise.
\end{defi}

Recall that our automata are always involutive and we use the convention of only representing its positive part,
that is, we are always implicitly assuming that every time a folding is performed (in the positive part) the corresponding folding between the respectively inverse arcs is also performed. Hence every folding (of arcs) induces an identification between the corresponding incident edges in the underlying graph.

\begin{rem} \label{rem: effect of foldings}
Note that, since foldings do not change the number of connected components in an automaton, the following statements are equivalent:
\begin{enumerate}[dep]
    \item a folding is closed (\resp open);
%    \item \R{a folding identifies parallel (\resp non-parallel) arcs;}
    \item a folding does not produce (\resp produces) an
    identification of vertices;
    \item a folding reduces by one (\resp does not change) the rank of the automaton.
\end{enumerate}
Therefore,
if $\widetilde{\Ati}$ is obtained from $\Ati$ after a sequence of foldings
(\ie if $\widetilde{\Ati}$ is a \defin{reduction} of~$\Ati$),
then
\begin{enumerate}[ind]
    \item $\card {\Edgs \widetilde{\Ati}} < \card {\Edgs \Ati}$
    \quad
    (precisely, $\card {\Edgs^+ \widetilde{\Ati}} = \card {\Edgs^+ \Ati} - \card{ \text{foldings}}$);
    \item $\card \Verts \widetilde{\Ati} \leq \card \Verts \Ati$
    \quad
    (precisely, $\card {\Verts \widetilde{\Ati}} = \card {\Verts \Ati} - \card {\text{open foldings}}$);
    \item $\rk \widetilde{\Ati} \leq \rk \Ati$
    \quad
    (precisely, $\rk \widetilde{\Ati} = \rk \Ati - \card {\text{closed foldings}}$);
    \item $\gen{\widetilde{\Ati}} = \gen{\Ati}$.
   \end{enumerate}
\end{rem}

\section{Subgroups of free groups and Stallings automata} \label{sec: Stallings automata}

We have seen that one can naturally assign a subgroup of $\Fn =\pres{A}{-}$
%(namely, the recognized subgroup $\Ati$)
to every $A$-automaton $\Ati$. We shall see that every subgroup of $\Fn$ admits such a description,
which
can be made unique after adding some natural restrictions to the used automata.

% \R{
% All the automata appearing in this section must be assumed to be connected, pointed and involutive (over an alphabet $A = \set{a_1,\ldots,a_n}$). We shall refer to them simply as automata in order to lighten the terminology.}

\begin{nota}
If $\verti$ and $\vertii$ are vertices in a tree $T \leqslant \Ati$,
then we denote  by $\verti \xleadsto{\scriptscriptstyle{T}} \vertii$ the unique reduced walk in $T$ from $\verti$ to $\vertii$.
\end{nota}

At the core of the alluded unicity is the following well-known fact which we state without a proof.

\begin{prop}
\label{prop: generators from tree}
Let $\Ati$ be a connected $A$-automaton and let $T$ be a spanning tree of $\Ati$. Then
the set
\begin{equation*} \label{eq: generating set}
S_T
\,=\,
\big{\{}
\,
\red{\lab}(\bp \xleadsto{\scriptscriptstyle{T}} \! \bullet \! \arc{\edgi} \!\bullet\! \xleadsto{\scriptscriptstyle{T}} \bp)\st \edgi\in E^+\Ati \setmin ET \,
\big{\}}
\,\subseteq\, \Free[A]
\end{equation*}
is a generating set for
$\gen{\Ati}$. Furthermore, if $\Ati$ is reduced then
$S_T$ is a free basis for~$\gen{\Ati}$. \qed
\end{prop}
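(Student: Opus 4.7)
The plan is to argue via the classical tree-cotree decomposition of walks. For the generating claim, I would fix a $\bp$-walk $\walki = \verti_0 \edgi_1 \verti_1 \cdots \edgi_k \verti_k$ with $\verti_0 = \verti_k = \bp$, and insert at every intermediate vertex $\verti_i$ the trivial detour $(\verti_i \xleadsto{\scriptscriptstyle{T}} \bp)(\bp \xleadsto{\scriptscriptstyle{T}} \verti_i)$, whose label freely reduces to the empty word. This rewrites $\red{\lab}(\walki)$ as the product of the reduced labels of the \emph{elementary loops}
\[
e_i \,=\, \big(\bp \xleadsto{\scriptscriptstyle{T}} \verti_{i-1}\big) \, \edgi_i \, \big(\verti_i \xleadsto{\scriptscriptstyle{T}} \bp\big).
\]
If $\edgi_i \in ET$, then $e_i$ is a walk in a tree and its reduced label is trivial in $\Fn$; otherwise, its reduced label is precisely the element of $S_T$ attached to the positive form of $\edgi_i$, up to inversion. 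Hence $\red{\lab}(\walki)$ is a product of elements of $S_T^{\pm 1}$, giving $\gen{\Ati} \leqslant \gen{S_T}$. The opposite inclusion is immediate, since each element of $S_T$ is by construction the reduced label of a $\bp$-walk in $\Ati$.

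For the freeness claim, assume $\Ati$ is reduced, and show that the natural map $\phi \colon \Free[S_T] \to \gen{\Ati}$ is injective by tracking cancellations at each junction of a product. A nontrivial reduced word $g_{\edgi_1}^{\varepsilon_1} \cdots g_{\edgi_k}^{\varepsilon_k} \in \Free[S_T]$ (with $\edgi_i \in E^+\Ati \setmin ET$ and $\varepsilon_i \in \{\pm 1\}$) expands into a $\bp$-walk of $\Ati$ consisting of alternating $T$-walks and ``central'' arcs $\edgi_i^{\varepsilon_i}$; at each junction the tree portion $(\verti \xleadsto{\scriptscriptstyle{T}} \bp)(\bp \xleadsto{\scriptscriptstyle{T}} \verti')$ collapses to the unique $T$-walk from $\verti$ to $\verti'$. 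I would then verify that no central arc is absorbed by its adjacent tree segment: when the collapsed segment is nonempty, its first (resp.\ last) arc cannot invert $\edgi_i^{\varepsilon_i}$ (resp.\ $\edgi_{i+1}^{\varepsilon_{i+1}}$), because by determinism the only candidate would be $\edgi_i^{-\varepsilon_i}$ (resp.\ $\edgi_{i+1}^{-\varepsilon_{i+1}}$), which cannot lie in $T$; when the segment is empty, the two central arcs meet directly and again by determinism cancel only in the single case $\edgi_i = \edgi_{i+1}$, $\varepsilon_i = -\varepsilon_{i+1}$ forbidden by reducedness in $\Free[S_T]$. Hence the concatenated walk is already reduced, and by the earlier remark its label is a nonempty reduced word in $\Fn$, proving injectivity of~$\phi$.

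The main obstacle is precisely this junction analysis: combining determinism of $\Ati$ with the condition $\edgi_i \in E^+\Ati \setmin ET$ to rule out unintended cancellations at the boundaries between central arcs and tree segments. Everything else is bookkeeping on tree walks. The \emph{core} hypothesis is not used for the freeness argument itself; it serves only to guarantee that the indexing of $S_T$ by $E^+\Ati \setmin ET$ is unambiguous (no spurious edges in hanging trees and no redundant generators), which is what allows the bijection to translate cleanly into a free basis.
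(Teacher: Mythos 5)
Your argument is correct and is the standard tree--cotree proof of this fact; note that the paper deliberately states this proposition \emph{without} proof (it is introduced as ``a well-known fact which we state without a proof''), so there is no in-paper argument to compare against. Both halves of your proof are sound: the detour-insertion decomposition into elementary loops for the generating claim, and the junction analysis for freeness, where the key observations --- that the inverse arc $\edgi_i^{-\varepsilon_i}$ of a cotree arc cannot lie in $T$, and that determinism upgrades walk-reducedness to label-reducedness --- are exactly what is needed. Your closing remark is also accurate: the \emph{core} hypothesis plays no role in the freeness argument (bridges, hence all edges of hanging trees, lie in every spanning tree and contribute no generators), and only determinism is actually used.
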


On the other hand, given a reduced word $w
%= w(a_1, \ldots ,a_n)
\in \Free[A]$, we can always consider the
\defin{petal automaton}
$\flower{w}$, \ie
the cyclic $A$-automaton spelling $w$ (or $w^{-1}$ if read in the opposite direction).
Then, given a subset $S
=\set{w_i}_i
\subseteq \Free[A]$, we define the \defin{flower automaton} of~$S$, denoted by~$\flower{S}$, to be the automaton obtained after identifying the basepoints of the petals of the elements in $S$; that is,
$\flower{S} = \sum_i \flower{w_i}$.
\vspace{-10pt}
\begin{figure}[H]
\centering
\begin{tikzpicture}[shorten >=1pt, node distance=2cm and 2cm, on grid,auto,>=stealth',
decoration={snake, segment length=2mm, amplitude=0.5mm,post length=1.5mm}]
  \node[state,accepting] (1) {};
  \path[->,thick]
        (1) edge[loop,out=160,in=200,looseness=8,min distance=25mm,snake it]
            node[left=0.2] {$w_1$}
            (1);
            (1);
  \path[->,thick]
        (1) edge[loop,out=140,in=100,looseness=8,min distance=25mm,snake it]
            node[above=0.15] {$w_2$}
            (1);
  \path[->,thick]
        (1) edge[loop,out=20,in=-20,looseness=8,min distance=25mm,snake it]
            node[right=0.2] {$w_p$}
            (1);
\foreach \n [count=\count from 0] in {1,...,3}{
      \node[dot] (1\n) at ($(1)+(45+\count*15:0.75cm)$) {};}
\end{tikzpicture}
\vspace{-20pt}
\caption{The flower automaton $\flower{w_1,w_2,\ldots,w_p}$}
\label{fig:  flower automaton}
%\vspace{-5pt}
\end{figure}
It is clear that
$\gen{\flower{S}} = \gen{S} \leqslant \Free[A]$.
Hence, every subgroup $H$ of $\Free[A]$ is recognized by some
(clearly not unique)
$A$-automaton.
Note however that, if the words in $S$ are reduced, then
$\flower{S}$ is core, and deterministic
\emph{except maybe at the basepoint}.
A key result due to
J. R. Stallings
is that determinism is the only missing condition in order to make this representation unique.

\begin{defi} \label{def: Schreier automaton}
The
(right)
\defin{Schreier $A$-automaton} of a subgroup $H \leqslant G = \gen{A}$,
denoted by $\schreier{H}$,
is the automaton with vertices the (right)
cosets $Hg$ $(g \in G)$, arcs
\smash{$Hg \xarc{a\,} Hga$}
for every $a \in A^{\pm}$, and basepoint $H$.
Note that $\schreier{H}$ is always deterministic (but not necessarily core).
%Note that if $H = \Trivial$, this is just the Cayley graph of $\Fn$ (with respect to~$A$).
\end{defi}

\begin{defi} \label{def: Stallings automaton}
The \defin{Stallings $A$-automaton} of a subgroup
$H$ of $\Free[A]$
is the core of the
right
Schreier automaton
$\Schreier{H}$.
We denote it by $\Stallings{H,A}$
(or simply by $\Stallings{H}$ if the generating set is clear).
\end{defi}

Note that $\Stallings{\Free_A,A}$ has one single vertex and arcs labelled by each $a \in A$. Such an automaton is called a \emph{bouquet}. We denote by $\bouquet{n}$ a bouquet with $n$ positive arcs.

By construction, $\stallings{\Sgpi,A}$ is deterministic, core (\ie it is a reduced $A$-automaton) and recognizes $H$. Below, we see that every reduced $A$-automaton is the Stallings automaton of some subgroup of $\Fn$, making ``reduced'' and ``Stallings'' automata equivalent notions.

\begin{thm}[\cite{StallingsTopologyFiniteGraphs1983}]\label{thm: Stallings bijection}
Let $\Fn$ be the free group on $A = \{a_1, \ldots ,a_n\}$; then the map
 \begin{equation}\label{eq: Stallings bijection}
\begin{array}{rcl}
\Set{\text{Subgroups of } \Fn} & \to & \Set{\text{Reduced $A$-automata}} \\
H \ \ & \mapsto & \ \Stallings{H} \\
\gen{\Ati} \  & \mapsfrom &  \  \Ati
\end{array}
 \end{equation}
is a bijection. Furthermore, finitely generated subgroups correspond precisely to finite Stallings automata, and in this case the bijection is algorithmic. \qed
\end{thm}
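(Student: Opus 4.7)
The plan is to construct explicit inverses for the two directions of the alleged bijection. First I would check well-definedness: the right Schreier automaton $\Schreier{H}$ is deterministic and complete by construction (the coset $Hg$ admits exactly one $a$-arc, going to $Hga$, for each $a \in A^{\pm}$), so its core $\Stallings{H}$ is a reduced $A$-automaton; conversely, any reduced automaton $\Ati$ defines a subgroup $\gen{\Ati} = \rlang{\Ati} \leqslant \Fn$ by definition.

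One composition is immediate: the $\bp$-walks of $\Schreier{H}$ read precisely the elements of $H$ (since $Hw = H$ iff $w \in H$), and passing to the core preserves the recognised subgroup, so $\gen{\Stallings{H}} = H$.

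The substantive step is the opposite composition: for a reduced $A$-automaton $\Ati$ with $H = \gen{\Ati}$, show $\Ati \cong \Stallings{H}$. I would define a vertex map $\phi\colon \Verts \Ati \to \Verts \Schreier{H}$ by sending $\verti$ to $H \cdot \red{\lab}(\walki)$ where $\walki$ is any walk from $\bp$ to $\verti$ in $\Ati$ (such a walk exists since reduced automata are connected). This is well-defined because any two choices $\walki_1, \walki_2$ of such walks, composed as $\walki_1$ followed by $\walki_2$ reversed, form a closed $\bp$-walk with reduced label $u_1 u_2^{-1} \in \gen{\Ati} = H$, where $u_i = \red{\lab}(\walki_i)$. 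I expect injectivity to be the main technical obstacle: if $Hu_1 = Hu_2$, write $u_1 = xz$ and $u_2 = yz$ with $z$ the longest common suffix, so that the reduced word $xy^{-1} \in H$ labels a reduced $\bp$-walk in $\Ati$. Determinism forces reading $x$ and reading $y$ from $\bp$ to terminate at the same vertex; applying determinism again to the common suffix $z$ then yields $\verti_1 = \verti_2$. That $\phi$ is label-preserving is clear, and its image lies in $\Stallings{H} = \core(\Schreier{H})$ because $\Ati$ is core; surjectivity onto $\Stallings{H}$ follows by lifting each arc $Hg \xarc{a} Hga$ through a reduced $uav \in H$ witnessing it, whose unique walk in $\Ati$ provides the required preimage.

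For the finiteness and algorithmic claims, given a finite reduced generating set $\{w_1,\ldots,w_p\}$ of $H$ the flower automaton $\flower{w_1,\ldots,w_p}$ is finite, and iteratively performing Stallings foldings (each strictly decreasing $\card{\Edgs^+}$ by Remark~\ref{rem: effect of foldings}) terminates after finitely many steps in a finite deterministic automaton recognising $H$; removing the finitely many hanging trees outside the basepoint component produces $\Stallings{H}$. Conversely, any finite $\Stallings{H}$ supplies a finite free basis of $H$ via Proposition~\ref{prop: generators from tree}. Each of these constructions (flower, folding, core) is manifestly effective, proving the algorithmic statement.
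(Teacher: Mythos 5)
The paper does not actually prove this theorem: it is stated with a \qed and attributed to \cite{StallingsTopologyFiniteGraphs1983}, so there is no internal proof to compare against --- only the post-theorem discussion of the algorithmic content (spanning-tree bases in one direction, flower automata plus folding in the other), which your final paragraph reproduces faithfully. Your argument for the bijection itself is the standard one and is essentially correct: $\gen{\Stallings{H}}=H$ is immediate from the definition of $\Schreier{H}$, and the isomorphism $\Ati \cong \Stallings{\gen{\Ati}}$ via the coset map $\phi$ is the right construction, with the longest-common-suffix trick correctly handling injectivity. One point you use repeatedly but never state: in a deterministic involutive automaton, if a word $w$ labels a walk from $\verti$ to $\vertii$, then so does its free reduction $\red{w}$ (reading $aa^{-1}$ returns to the starting vertex). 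This is what lets you pass from ``$xy^{-1}$ is the \emph{reduced label} of some $\bp$-walk'' to ``$xy^{-1}$ itself labels a reduced $\bp$-walk'', and likewise what makes the walks reading $u_1=xz$ and $u_2=yz$ from $\bp$ available in the injectivity and surjectivity steps; it deserves to be isolated as a one-line lemma. A second cosmetic slip: the core is obtained by taking the basepoint component \emph{and then} deleting hanging trees not containing $\bp$, rather than ``removing hanging trees outside the basepoint component''. Neither issue affects the correctness of the argument.
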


As stated, if we restrict our attention to finitely generated subgroups,
then the above bijection is algorithmic.
Given a finite Stallings $A$-automata~$\Ati$, one can always compute a maximal tree $T$ of $\Ati$,
and then use
Proposition \ref{prop: generators from tree}
to compute a free basis for $\gen{\Ati}$.
In particular,
$\rk \gen{\Ati} =
{\card \Edgs^+\Ati} - \card \Verts\Ati+ 1 = \rk \Ati$.

For the other direction, suppose that we are given a finite set of generators
$S
%= \set{w_1,\ldots,w_p}
$ for~$H$.
We have seen that $\flower{S}$
is a finite core $A$-automaton recognizing $H$, although it might be not deterministic.
To fix this, one can apply successive foldings on
possible arcs breaking determinism. Of course, a folding can
provide new opportunities for folding, but, since the number of arcs in the graph is finite and decreases with each folding, after a
finite number of steps, we will obtain an $A$-automaton with no available foldings (\ie deterministic, and hence a reduced $A$-automaton).
Theorem~\ref{thm: Stallings bijection} states that the resulting automaton must be precisely $\Stallings{H,A}$.
Note that the bijectivity of
\eqref{eq: Stallings bijection}
implies that the result of the folding process
depends neither on the order in which we perform the foldings
nor on the starting (finite) set of generators for $H$, but only on the subgroup~$H$ itself.

The bijection~\eqref{eq: Stallings bijection} has proven to be extremely fruitful and has provided natural proofs for many results on the free group,
especially from the algorithmic point of view (see~\cite{
KapovichStallingsFoldingsSubgroups2002,
MiasnikovAlgebraicExtensionsFree2007,
BartholdiRationalSubsetsGroups2010}).
In particular,
the Nielsen-Schreier theorem,
the solvability of the subgroup membership problem,
and the computability of rank and basis for finitely generated subgroups can be easily derived from this geometric interpretation.
Concretely, given a finite subset $S$ of $\Fn$, one can decide whether a given element $w \in \Fn$ belongs to $\gen{S}$ just by checking whether $w$ reads a \bp-walk in $\Stallings{S}$. In the same vein, one can use Theorem~\ref{thm: Stallings bijection} together with Proposition~\ref{prop: generators from tree} to compute a basis, and hence the rank, of the finitely generated subgroup~$\gen{S}$.

Many other algebraic properties of subgroups become transparent from this geometric viewpoint.
For example the Stallings automata of the conjugacy classes of a subgroup $H$ of $\Fn$ correspond exactly to the automata obtained after changing the basepoint in $\schreier{H}$ and taking the corresponding core automaton.

\begin{rem} \label{rem: fi iff finite and complete}
Note that $\Stallings{H}$ is complete if and only if $\Stallings{H} = \Schreier{H}$, and otherwise $\Schreier{H}$ is necessarily infinite. Hence a subgroup $H$ has finite index in $\Fn$ if and only if $\Stallings{H}$ is finite and complete.
\end{rem}
From this,
one can easily decide finite index in $\Fn$, and obtain other classical results involving subgroups of finite index, such as the Schreier index formula
for finite index subgroups
($\rk H = {\ind{\Fn}{H}} (n-1) + 1$).

With some extra work, one can also study intersections and extensions of subgroups of~$\Fn$
(we say that $G$ is an extension of $H$ if $H$ is a subgroup of $G$).
For example, a neat proof for an algorithmic version of the classical theorem of Takahasi on free extensions is easily obtained using Stallings automata.

\begin{thm}[\cite{TakahasiNoteChainConditions1951}] \label{thm: Takahasi theorem}
Every extension of a given finitely generated subgroup $H$ of $\Fn$ is a free multiple of an element of a computable finite family of extensions of $H$. \qed
\end{thm}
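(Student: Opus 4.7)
The plan is to make algorithmic use of the correspondence between subgroups of $\Fn$ and reduced $A$-automata (Theorem~\ref{thm: Stallings bijection}), together with the finiteness of $\Stallings{H}$. The key geometric observation is that any inclusion $H \leqslant K$ induces a morphism of pointed deterministic $A$-automata $\phi \colon \Stallings{H} \to \Stallings{K}$: the coset assignment $Hg \mapsto Kg$ extends to a label- and basepoint-preserving morphism between the Schreier automata that sends reduced $\bp$-walks to reduced $\bp$-walks, and hence restricts to the cores. The image $\Delta = \phi(\Stallings{H})$ is then a connected, deterministic, core sub-automaton of $\Stallings{K}$, hence a reduced $A$-automaton, and so $\Delta = \Stallings{H_\pi}$ for some extension $H_\pi \geqslant H$ completely determined by the partition $\pi$ of $V(\Stallings{H})$ that identifies vertices having a common image in $\Delta$.

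This at once suggests the natural candidate family: let $\pi$ range over all partitions of $V(\Stallings{H})$ and, for each $\pi$, let $H_\pi$ be the subgroup recognised by the reduced automaton obtained after folding $\qt{\Stallings{H}}{\pi}$ to completion (which terminates and preserves the recognised subgroup by Remark~\ref{rem: effect of foldings}). Since $\Stallings{H}$ is finite, so is $\mathcal{F}(H) = \{\,H_\pi\,\}_\pi$; and since both the quotient and the folding processes are effective, $\mathcal{F}(H)$ is computable from any finite generating set of $H$.

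It remains to prove that every extension $K \geqslant H$ is a free multiple of some $H_\pi \in \mathcal{F}(H)$. Given $K$, let $\pi$ be the partition induced by $\phi$ on $V(\Stallings{H})$, so that $\Delta = \Stallings{H_\pi}$ embeds as a sub-automaton of $\Stallings{K}$. Choose a spanning tree $T_0$ of $\Delta$ and extend it to a spanning tree $T$ of $\Stallings{K}$; since $T_0$ already spans $V(\Delta)$, any further edge of $\Delta$ would close a cycle in $T$, so $T \cap E(\Delta) = E(T_0)$. Applying Proposition~\ref{prop: generators from tree} to $\Stallings{K}$ with the tree $T$, the resulting free basis of $K$ splits as a disjoint union $S_T = S_{T_0} \sqcup S'$, where $S_{T_0}$ arises from the $T_0$-complementary arcs of $\Delta$ and $S'$ from the remaining $T$-complementary arcs (none of which lies in $\Delta$). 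Since $S_T$ is a \emph{free} basis of $K$, the decomposition $S_T = S_{T_0} \sqcup S'$ forces the free product $K = \gen{S_{T_0}} * \gen{S'} = H_\pi * F$, exhibiting $K$ as a free multiple of $H_\pi$.

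The step I would single out as the main obstacle is the identification of $S_{T_0}$ with the canonical basis of $H_\pi$ given by Proposition~\ref{prop: generators from tree} applied to $\Delta$ with $T_0$. This requires checking that, for every $T_0$-complementary arc $\edgi$ of $\Delta$, the reduced $T$-walks from $\bp$ to $\init(\edgi)$ and $\term(\edgi)$ appearing in the formula for $S_T$ stay entirely inside $T_0$. This is however immediate from the uniqueness of reduced walks in a tree: both endpoints lie in the subtree $T_0 \subseteq T$, so the unique reduced $T$-walk between them must coincide with the $T_0$-walk between them. Once this is verified, the free product decomposition follows and the theorem is proved.
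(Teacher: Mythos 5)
Your proof is correct, and it is precisely the standard Stallings-automata argument that the paper alludes to but does not spell out (the theorem is stated with a citation and no proof; the paper points to the literature, e.g.\ \cite{MiasnikovAlgebraicExtensionsFree2007}, for the details). The two key steps --- that an inclusion $H \leqslant K$ induces a basepoint- and label-preserving morphism $\Stallings{H} \to \Stallings{K}$ whose image is a reduced subautomaton $\Delta$ determined by a vertex partition of $\Stallings{H}$, and that a subautomaton of a reduced automaton recognizes a free factor via a compatible choice of spanning trees and Proposition~\ref{prop: generators from tree} --- are exactly the intended ones, and your verification that the $T$-geodesics relevant to $S_{T_0}$ stay inside $T_0$ closes the only delicate point.
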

The minimal such computable set of extensions is called the set of
\defin{algebraic extensions}
of~$H$, and is denoted by~$\Alg(H)$.
Note that $H \in \Alg(H)$
(see~\cite{MiasnikovAlgebraicExtensionsFree2007} for details).

Finally, we extend the previous scheme to arbitrary
%(pointed and involutive)
$A$-automata,
that is, we define the
\defin{Stallings reduction}
of an $A$-automaton $\Ati$ to be~$\stallings{\Ati} = \stallings{\gen{\Ati}}$. We usually abbreviate $\stallings{\Ati}$ to $\red{\Ati}$.
Note that we then have that
$\red{\Ati} = \red{\Atii} $ if and only if $ \gen{\Ati} = \gen{\Atii}$.

\begin{lem} \label{lem: red lang = red lang red}
If $\Ati$ is an involutive $A$-automaton, then $\rlang{\Ati} = \rlang{\red{\Ati}}$.
\end{lem}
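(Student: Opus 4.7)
The plan is to unpack the two sides of the claimed equality and reduce it to the content of Theorem~\ref{thm: Stallings bijection}. By definition of the subgroup recognized by an automaton, we have $\rlang{\Ati} = \gen{\Ati}$ and $\rlang{\red{\Ati}} = \gen{\red{\Ati}}$, so the lemma is equivalent to the statement $\gen{\Ati} = \gen{\red{\Ati}}$. Now, recalling that $\red{\Ati}$ was defined as $\stallings{\gen{\Ati}}$, the task reduces to showing that $\gen{\stallings{H}} = H$ for every subgroup $H \leqslant \Fn$; but this is precisely the fact that the assignment $H \mapsto \stallings{H}$ and the assignment $\Ati \mapsto \gen{\Ati}$ are mutually inverse in the Stallings bijection~\eqref{eq: Stallings bijection}.

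An alternative, more constructive route would be to exhibit $\red{\Ati}$ as the result of a concrete sequence of \emph{subgroup-preserving} operations applied to $\Ati$: first, restrict to the basepoint-containing connected component (which does not alter $\gen{\Ati}$, as only walks in this component can be $\bp$-walks); second, iteratively prune hanging trees to obtain $\core(\Ati)$, which by the definition of the core satisfies $\gen{\core(\Ati)} = \gen{\Ati}$; third, successively fold pairs of arcs sharing origin and label until the automaton becomes deterministic, invoking item~(iv) of Remark~\ref{rem: effect of foldings} to conclude that each folding preserves $\gen{\cdot}$. The outcome of this procedure is a reduced $A$-automaton recognizing $\gen{\Ati}$, and by the uniqueness half of Theorem~\ref{thm: Stallings bijection} it must coincide with $\stallings{\gen{\Ati}} = \red{\Ati}$, whence $\gen{\red{\Ati}} = \gen{\Ati}$.

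Either way, the conclusion $\rlang{\Ati} = \gen{\Ati} = \gen{\red{\Ati}} = \rlang{\red{\Ati}}$ follows immediately. There is no real obstacle here: the lemma is essentially a bookkeeping consequence of the Stallings bijection, recorded because it will be convenient to invoke it as $\rlang{\Ati} = \rlang{\red{\Ati}}$ in later arguments that manipulate automata rather than subgroups directly.
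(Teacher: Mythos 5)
Your first argument is correct and genuinely different from the paper's. Unwinding the definitions ($\rlang{\Ati} = \gen{\Ati}$ and $\red{\Ati} = \stallings{\gen{\Ati}}$) does reduce the lemma to $\gen{\stallings{H}} = H$, which is exactly the "mutually inverse" content of the cited Theorem~\ref{thm: Stallings bijection} (and is already observed just before it: $\stallings{H}$ recognizes $H$ by construction, being the core of the Schreier automaton). The paper instead gives a self-contained, elementary proof that never touches the bijection: it takes a single folding $\Ati \to \Ati'$ of two arcs $\verti \xarc{a\,} \vertii$ and $\verti \xarc{a\,} \vertii'$, notes that $\lang{\Ati} \subseteq \lang{\Ati'}$ (walks push forward with the same label), shows conversely that every $u \in \lang{\Ati'}$ lifts to some $u' \in \lang{\Ati}$ by inserting factors $a^{-1}a$, so that $\red{u'} = \red{u}$ and the reduced languages agree, and then iterates over the folding sequence. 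What that buys is precisely the content your argument black-boxes: the single-folding invariance is the engine behind Remark~\ref{rem: effect of foldings}(iv) and behind the claim that the folding process computes $\stallings{H}$ at all, so proving it directly keeps the development non-circular; your route is legitimate only because the paper imports Theorem~\ref{thm: Stallings bijection} as an external black box. Your second, "constructive" route is weaker and I would drop it: it explicitly invokes Remark~\ref{rem: effect of foldings}(iv), which is stated without proof and is essentially the assertion at stake here; its terminating fold-and-prune procedure only makes sense for finite automata, whereas the lemma is stated for arbitrary involutive $A$-automata; and the paper's $\core$ (pruning of hanging trees) is only defined for deterministic automata, so the pruning step should in any case come after folding, not before.
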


\begin{proof}
It suffices to consider one folding at the time. More precisely, let $\Ati'$ be obtained from~$\Ati$ by folding the arcs
\smash{$\verti \xarc{a\,} \vertii$}
and
\smash{$\verti \xarc{a\,} \vertii'$}. Clearly, $\lang{\Ati} \subseteq \lang{\Ati'}$. On the other hand, for every $u \in \lang{\Ati'}$, we can find some $u' \in \lang{\Ati}$ by inserting factors of the form $a^{-1}a$ into $u$. It follows that $\overline{u'} = \overline{u}$ and so
$\rlang{\Ati} =
\red{\lang{\Ati}} = \red{\lang{\Ati'}} =
\rlang{\Ati'}$.
By iterating this argument for a sequence of foldings, we get the desired claim.
\end{proof}

\medskip
%\subsection{On the rank of subgroups and automata}
%In this section

In the remainder of the section we summarize several consequences
(on the relation between the graphical and the algebraic rank)
of applying the Stallings bijection \eqref{eq: Stallings bijection}
to previous results.

The first one is an immediate consequence of the fact that foldings never increase the rank of the affected automata.

\begin{rem}
    Let $\Ati$ be an automaton recognizing $H \leqslant \Fn$; then,
    $
        \rk H
        =
        \rk \red{\Ati}
        \leq
        \rk \Ati
    $.
\end{rem}
In particular (of course),
if $H,K \leqslant \Fn$,
then~$\rk (H \join K) \leq \rk H + \rk K$, but this bound is not necessarily tight. Below,
we use Stallings theory to precisely describe the maximum and minimum ranks attainable by extending a given finitely generated subgroup of $\Fn$.

\begin{prop}
Let $H$ be a finitely generated subgroup of $\Fn$. Then,
\begin{enumerate}[ind]
    \item \label{item: min extended rank}
    the minimum rank of an extension of $H$ is the minimum of the ranks of the algebraic extensions of $H$.% (and therefore computable).

\item \label{item: max extended rank}
the maximum rank of an extension of $H$ is either infinite (if $H$ is of infinite index in~$\Fn$) or equal to the rank of~$H$ (if $H$ is of finite index in~$\Fn$).
\end{enumerate}
In other words, for every $K \leqslant \Fn$,
\begin{equation*}
    \min_{H_i \in \Alg(H)}
    \rk (H_i)
    \,\leq\,
    \rk (H \join K)
    \,\leq\,
    \left\{ \!
    \begin{array}{ll}
         \infty & \text{if } \ind{\Fn}{H} = \infty  \\
         \rk (H) & \text{if } \ind{\Fn}{H} < \infty \, ,
    \end{array}
    \right.
\end{equation*}
and all the bounds are tight and computable.
\end{prop}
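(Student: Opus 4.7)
The plan is to establish each of the two inequalities separately, and then verify tightness and computability.

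For the lower bound (i), Takahasi's theorem (Theorem~\ref{thm: Takahasi theorem}) is directly applicable: any extension of $H$ — and in particular $H \join K$ — decomposes as $H_i * F$ for some $H_i \in \Alg(H)$ and some free subgroup $F \leqslant \Fn$. Hence
\[
\rk(H \join K) \,=\, \rk(H_i) + \rk(F) \,\geqslant\, \rk(H_i) \,\geqslant\, \min_{H_j \in \Alg(H)} \rk(H_j) \, .
\]
The bound is attained by choosing $H_{i_0} \in \Alg(H)$ of minimum rank and setting $K := H_{i_0}$, since $H \leqslant H_{i_0}$ forces $H \join K = H_{i_0}$. Computability reduces to that of the (finite) family $\Alg(H)$.

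For the upper bound in the finite-index case, $\ind{\Fn}{H} < \infty$ forces every extension $K' \geqslant H$ to have finite index with $\ind{\Fn}{K'} \leqslant \ind{\Fn}{H}$, so the Schreier index formula gives
\[
\rk(K') \,=\, \ind{\Fn}{K'}(n-1)+1 \,\leqslant\, \ind{\Fn}{H}(n-1)+1 \,=\, \rk(H) \, ,
\]
with equality realized trivially by $K = H$. Both the value $\rk(H)$ and the finiteness of the index are computable from $\stallings{H}$ by Remark~\ref{rem: fi iff finite and complete}.

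The infinite-index case is where the real work lies. The plan is to replace the finite Stallings automaton $\stallings{H}$ by the (now infinite) deterministic core automaton $\schreier{H}$: since the underlying graph of $\schreier{H}$ is $2n$-regular (assuming $n\geqslant 2$), formula~\eqref{eq: graph rank} gives it infinite cycle rank. Thus for every $p\geqslant 0$ one can choose a finite subautomaton $\Xi \leqslant \schreier{H}$ containing $\stallings{H}$ — for example the ball of sufficiently large radius around $\bp$ — with $\rk(\Xi) \geqslant \rk(H) + p$. Being a subautomaton of a deterministic automaton, $\Xi$ is itself deterministic, so $\core(\Xi)$ is a reduced $A$-automaton, i.e.\ the Stallings automaton of $\gen{\Xi}$; and trimming hanging trees preserves cycle rank by~\eqref{eq: graph rank}. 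Consequently $\gen{\Xi}$ is an extension of $H$ with $\rk(\gen{\Xi}) \geqslant \rk(H) + p$, and letting $p \to \infty$ yields the claimed infinite supremum.

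The main technical subtlety is precisely in this last paragraph: one must ensure that the algebraic rank of $\gen{\Xi}$ matches the graph rank of the chosen finite subautomaton $\Xi$, which rests on determinism (so that $\core(\Xi)$ is reduced and thus equals $\stallings{\gen{\Xi}}$) together with the rank-preservation of pruning hanging trees. Everything else — the Takahasi decomposition, the Schreier index formula, and the algorithmic aspects — is already in place in the excerpt.
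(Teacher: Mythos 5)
Your treatment of part (i) (Takahasi plus Grushko, with attainment via a minimum-rank algebraic extension) and of the finite-index half of part (ii) (Schreier index formula) is correct and matches the paper's argument in substance. The infinite-index case, however, contains a fatal error. You claim that $\schreier{H}$ has infinite cycle rank because it is $2n$-regular; but formula~\eqref{eq: graph rank} only applies to finite graphs, and the conclusion is false: $\schreier{H}$ is obtained from $\stallings{H}$ by attaching hanging trees, so its cycle rank equals $\rk(\stallings{H}) = \rk(H) < \infty$ (for $H = \Trivial$, for instance, $\schreier{H}$ is the Cayley tree of $\Fn$ and has rank $0$). Consequently, by Lemma~\ref{lem: diff of ranks} every finite subautomaton $\Xi \leqslant \schreier{H}$ satisfies $\rk(\Xi) \leqslant \rk(H)$, and no choice of ball can achieve $\rk(\Xi) \geqslant \rk(H) + p$ for $p \geqslant 1$.

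There is a second, independent problem with the same step: any subautomaton $\Xi$ with $\stallings{H} \leqslant \Xi \leqslant \schreier{H}$ recognizes a subgroup squeezed between $\gen{\stallings{H}} = H$ and $\gen{\schreier{H}} = H$, so $\gen{\Xi} = H$; your construction never produces a new extension at all. The fix is to leave the Schreier automaton entirely: since $\ind{\Fn}{H} = \infty$, the automaton $\stallings{H}$ is incomplete, say $a$-deficient at some vertex $\verti$, and one can attach at $\verti$ an $a$-source of infinite rank (a new infinite ray of $a$-arcs with a $b$-loop at each new vertex). The result is still deterministic and core, hence is the Stallings automaton of an extension of $H$ (indeed a free multiple of $H$) of infinite rank. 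This is exactly what the paper does.
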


\begin{proof}
\ref{item: min extended rank}
Let $L$ be an extension of $H$ of minimum rank.
Since the algebraic extensions of $H$ are certainly extensions of $H$, it is clear that
$\rk(L) \leq
\min\, \set{\rk(H_i) \st H_i \in \Alg(H) }
$.
On the other hand, since every extension of $H$ is a free factor of some element in  $\Alg(H)$ (see Theorem~\ref{thm: Takahasi theorem}),
from Grushko theorem
we have that
$\rk(L) \geq \min\, \set{\rk(H_i) \st H_i \in \Alg(H) }$.
Hence, $\rk(L) = \min\, \set{\rk(H_i) \st H_i \in \Alg(H) }$, as claimed.

\ref{item: max extended rank}
If the index of $H$ in $\Fn$ is infinite, then
%$\stallings{H}$ is incomplete, that is
there exists a vertex $\verti$ in $\stallings{H}$ and a generator $a \in A$ such that there is no $a$-arc leaving $\verti$. So, it is enough to attach to $\verti$
an $a$-source of infinite rank (for example the one in~Figure~\ref{fig: a-source infinite rank})
to obtain an extension (indeed a free multiple) of $H$ of infinite rank.

\begin{figure}[H]
  \centering
  \begin{tikzpicture}[shorten >=1pt, node distance=1.2 and 1.2, on grid,auto,>=stealth']
   \node[state,accepting] (0) {};
   \node[state] (1) [right = of 0]{};
   \node[state] (2) [right = of 1]{};
   \node[state] (3) [right = of 2]{};
   \node[state] (4) [right = of 3]{};
   \node[state] (5) [right = of 4]{};
   \node[] (dots) [right = 0.5 of 5]{$\cdots$};

   \path[->]
        (0) edge[red]
            node[below] {\scriptsize{$\R{a}$}}
            (1);

    \path[->]
        (1) edge[loop above,blue,min distance=10mm,in=55,out=125]
            node[left = 0.1] {\scriptsize{$b$}}
            (1)
            edge[red]
            %node[below] {\scriptsize{$\R{a}$}}
            (2);

    \path[->]
        (2) edge[loop above,blue,min distance=10mm,in=55,out=125]
            %node[] {\scriptsize{$b$}}
            (2)
            edge[red]
            %node[below] {\scriptsize{$\R{a}$}}
            (3);

    \path[->]
        (3) edge[loop above,blue,min distance=10mm,in=55,out=125]
            %node[] {\scriptsize{$b$}}
            (3)
            edge[red]
            %node[below] {\scriptsize{$\R{a}$}}
            (4);
    \path[->]
        (4) edge[loop above,blue,min distance=10mm,in=55,out=125]
            %node[] {\scriptsize{$b$}}
            (4)
            edge[red]
            %node[below] {\scriptsize{$\R{a}$}}
            (5);

    \path[->]
        (5) edge[loop above,blue,min distance=10mm,in=55,out=125]
            %node[] {\scriptsize{$b$}}
            (5);
            % edge[red]
            % %node[below] {\scriptsize{$\R{a}$}}
            % (6);
\end{tikzpicture}
\caption{An $a$-source of infinite rank}
\label{fig: a-source infinite rank}
\end{figure}

On the other hand, if $\ind{\Fn}{H} < \infty$ then $\stallings{H}$ is complete, and therefore any addition to $\stallings{H}$ would produce an identification of vertices. Therefore, after folding, we get a decrease in the index (and hence in the rank) of~$H$.
\end{proof}

If $R \subseteq \Fn$, then we usually abuse language and write  $\Ati + R = \Ati + \flower{R}$. Note that we then have that
$\gen{\Ati + R} =
\gen{\Ati} \join \gen{R}$.

\begin{lem} \label{lem: diff of ranks}
Let $\Atii$ and $\Ati$ be automata.
If $\Atii \leqslant \Ati$, then $\rk \Atii \leqslant \rk \Ati$.
Moreover, if $\Atii,\Ati$ are finite and connected then
there exists a subset $R$ of $\Fn$ of cardinal $\rk \Ati - \rk \Atii$ such that $\red{\Ati} = \red{\Atii + R}$.
\end{lem}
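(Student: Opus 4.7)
The plan is to prove the two parts separately, using cycle rank for the inequality and Proposition~\ref{prop: generators from tree} for the existence of $R$.

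For the first assertion, I would pick a minimum set $F \subseteq \Edgs \Ati$ whose removal turns $\Ati$ into a forest, so $|F| = \rk \Ati$ (the case $\rk \Ati = \infty$ being vacuous). Then $F \cap \Edgs \Atii$ breaks every cycle of $\Atii$, since $\Atii \setminus F$ sits inside the forest $\Ati \setminus F$ as a subgraph; hence $\rk \Atii \leq |F \cap \Edgs \Atii| \leq \rk \Ati$.

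For the main claim, set $d = \rk \Ati - \rk \Atii$ and fix a spanning tree $T$ of $\Atii$, which I extend greedily to a spanning tree $T'$ of $\Ati$ (possible because $\Ati$ is finite and connected). The key structural observation is that every edge of $\Edgs^+\Atii \setminus T$ already closes a cycle with $T$ and hence cannot lie in $T'$; consequently $T' \setminus T \subseteq \Edgs^+ \Ati \setminus \Edgs^+ \Atii$, and the cotree $\Edgs^+ \Ati \setminus T'$ splits disjointly into (a)~$\Edgs^+ \Atii \setminus T$ and (b)~$(\Edgs^+ \Ati \setminus \Edgs^+ \Atii) \setminus (T' \setminus T)$, a set of cardinality exactly $d$ by the formula $\rk = e - v + 1$ applied to both automata.

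Next I would invoke Proposition~\ref{prop: generators from tree} for $\Ati$ with $T'$: the cotree edges produce a generating set $S_{T'}$ of $\gen{\Ati}$. The part-(a) contributions recover the generating set $S_T$ of $\gen{\Atii}$, because $T \subseteq T'$ forces the unique reduced $T'$-path between any two vertices of $\Atii$ to run inside $T$. Letting $R_0$ collect the $d$ reduced labels arising from part~(b), I obtain $\gen{\Atii} \join \gen{R_0} = \gen{S_{T'}} = \gen{\Ati}$, which translates to $\red{\Atii + R_0} = \stallings{\gen{\Ati}} = \red{\Ati}$.

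The hard point is honouring $|R| = d$ \emph{exactly} rather than merely $\leq d$, since the part-(b) labels may repeat or equal the identity, making $R_0$ (as a set) strictly smaller than $d$. I would handle this by padding $R_0$ with arbitrary further elements already in $\gen{\Ati}$ until the size equals $d$; this is harmless for the join, and feasible because $d > 0$ forces $\rk \Ati \geq 1$, so $\gen{\Ati}$ is a non-trivial (hence infinite) free group that supplies all the padding needed. When $d = 0$ one simply takes $R = \varnothing$.
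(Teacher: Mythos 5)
Your proof is correct and follows essentially the same route as the paper's: extend a spanning tree $T$ of $\Atii$ to a spanning tree $T'$ of $\Ati$ and let $R$ come from the labels of the cotree edges lying outside $\Atii$, via Proposition~\ref{prop: generators from tree}. The only differences are minor refinements --- a direct feedback-edge-set argument for the first inequality in place of the paper's appeal to \eqref{eq: graph rank}, and the padding step forcing $\card{R}$ to equal $\rk \Ati - \rk \Atii$ exactly, a point the paper dismisses with ``clearly has the required cardinal.''
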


\begin{proof}
The first claim is obvious if $\Ati$ has infinite rank, and otherwise it follows from a straightforward analysis on \eqref{eq: graph rank} corresponding to the possible situations after removing an edge or an isolated vertex from~$\Ati$.
For the second claim,
it is enough to consider any spanning tree $T$ of the subgraph $\Atii$ and realize that it can be expanded
(\eg by using breadth-first search)
to a spanning tree $T'$ of $\Ati$.
We can then take $R$ to be $S_{T'} \setmin S_{T}$,
which clearly has the required cardinal.
Since $\gen{\Ati} = \gen{S_{T'}} = \gen{S_T \cup R} = \gen{\Atii + R}$, the claimed result follows from Theorem~\ref{thm: Stallings bijection}.
\end{proof}

Note that, in general, a strict subgraph $\Atii < \Ati$ can still have the same rank as $\Ati$
(\eg if~$\Ati$ has isolated vertices, or ``hanging trees''). Below we prove that this is no longer true if $\Ati$ is finite and reduced.

\begin{cor} \label{cor: strict inclusion}
The rank of a finite core automaton is strictly greater than the rank of any of its connected strict subautomata.
\end{cor}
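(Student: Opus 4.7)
\emph{The plan} is to apply the rank formula~\eqref{eq: graph rank} to both $\Ati$ and $\Atii$ (which are finite and connected), yielding $\rk \Ati - \rk \Atii = \Delta_E - \Delta_V$, where $\Delta_V = \card{\Verts \Ati} - \card{\Verts \Atii} \geq 0$ and $\Delta_E = \card{\Edgs^+ \Ati} - \card{\Edgs^+ \Atii} \geq 0$. Since subautomata inherit the basepoint, $\bp \in \Verts \Atii$, and it suffices to prove $\Delta_E > \Delta_V$. The case $\Delta_V = 0$ is immediate: the strict inclusion forces $\Delta_E \geq 1$, and no deleted edge can be a bridge of $\Ati$ (otherwise $\Atii$ would fail to be connected), so $\Delta_E - \Delta_V \geq 1$.

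For the main case $\Delta_V \geq 1$, the crucial ingredient is the following auxiliary fact: in a finite connected core automaton with more than one vertex, every non-basepoint vertex has at least two distinct incident edges in the underlying graph. I would prove this by examining the entry arc $e$ and the exit arc $e'$ of a reduced $\bp$-walk passing through $v \neq \bp$: reducedness forces $e' \neq e^{-1}$, so either $e$ and $e'$ lie on distinct underlying edges (giving two incident edges at once), or $e' = e$ is forced to be a loop at $v$, in which case the connectedness of $\Ati$ guarantees at least one further non-loop edge incident to~$v$.

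With this ingredient, I would conclude by double counting. Let $V_0 = \Verts \Ati \setmin \Verts \Atii \subseteq \Verts \Ati \setmin \set{\bp}$ and let $E_0$ be the set of edges of $\Ati$ incident to some vertex of $V_0$; all edges in $E_0$ are absent from $\Atii$. Partitioning $E_0 = E_0^{\mathrm{in}} \sqcup E_0^{\mathrm{bd}}$ according to whether an edge has both or exactly one endpoint in $V_0$, counting edge-endpoints in $V_0$ gives $2\,\card{V_0} \leq 2\,\card{E_0^{\mathrm{in}}} + \card{E_0^{\mathrm{bd}}}$ by the auxiliary fact, while connectedness of $\Ati$ (together with $\bp \in \Verts \Atii$ and $V_0 \neq \varnothing$) forces $\card{E_0^{\mathrm{bd}}} \geq 1$. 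Combining yields $\card{E_0} \geq \card{V_0} + 1$, so $\Delta_E \geq \card{E_0} \geq \Delta_V + 1$, as required. The delicate point is the auxiliary claim, specifically the subcase where the entry and exit arcs coincide on a loop at $v$, where connectedness must be invoked to extract a second incident edge; the remainder is a routine counting argument.
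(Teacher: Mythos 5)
Your proof is correct, but it takes a genuinely different route from the paper's. The paper argues dynamically: any connected strict subautomaton of $\Ati$ is reached by deleting arcs one at a time (discarding the non-basepoint components that may appear), coreness forces the very first deletion to strictly drop the rank, and Lemma~\ref{lem: diff of ranks} then guarantees the rank never climbs back. You instead make a single global count: applying \eqref{eq: graph rank} to the two connected automata gives $\rk \Ati - \rk \Atii = \Delta_E - \Delta_V$, and the inequality $\Delta_E \geq \Delta_V + 1$ follows from double-counting incidences between the missing vertices $V_0$ and the edges meeting them. Both arguments ultimately rest on the same consequence of coreness --- every non-basepoint vertex meets at least two distinct edges of the underlying graph, equivalently $\Ati$ has no hanging trees --- but you isolate and prove this explicitly (including the loop subcase), whereas the paper leaves it implicit in the remark that the first removed arc ``cannot produce an isolated vertex''. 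Your version buys a quantitative refinement ($\rk \Ati - \rk \Atii \geq \card{E_0} - \card{V_0} \geq 1$) and avoids any dependence on the order in which arcs are deleted; the paper's buys brevity and direct reuse of Lemma~\ref{lem: diff of ranks}. Two minor points: in your $\Delta_V = 0$ case the bridge observation is superfluous, since $\Delta_E \geq 1$ together with $\Delta_V = 0$ already gives the conclusion; and when invoking ``a reduced $\bp$-walk through $v$'' you should note that the paper defines \emph{core} via $\bp$-walks with \emph{reduced label}, which are automatically reduced walks (consecutive inverse arcs would create a factor $a a^{-1}$ in the label), so your auxiliary fact does follow from the definition as stated.
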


\begin{proof}
Let $\Ati$ be a finite core automaton. Note that any connected subautomaton of~$\Ati$ is obtained by successively removing arcs, and discarding the eventual connected components not containing the basepoint that may appear. Since $\Ati$ is core, the first removed arc cannot produce an isolated vertex; hence the rank decreases in view of Lemma~\ref{lem: diff of ranks}.
% Now, if the first removed arc does not disconnect $\Ati$, then the rank of the obtained automaton $\Ati'$  is $\rk \Ati - 1$;
% otherwise,
% the removed arc separates~$\Ati$ into two disjoint parts:
% $\Ati_{\!1}$ (containing the basepoint)
% and $\Ati_{\!2}$ (of strictly positive rank, since $\Ati$ is core).
% Then,
% it is easy to check that:
% $
%     \rk(\Ati_1) + 1
%     \leq
%     \rk(\Ati_1) + \rk(\Ati_2)
%      =
%     % \rk(\Ati_1 \sqcup \Ati_2)
%     % =
%     \rk(\Ati).
% $
% Hence,
% any automaton~$\Ati'$ obtained from~$\Ati$ after the first arc removal has rank at most $\rk \Ati - 1$. Since any connected strict subautomaton of $\Ati$ is a subautomaton of one of the $\Ati'$, the claimed result follows from Lemma~\ref{lem: diff of ranks}.
\end{proof}

\begin{lem} \label{lem: identification first}
    Let $\Ati$ be a finite Stallings automaton, and
    let $R \subseteq \Fn$ be a finite subset
    of size~$r$.
    % and
    % let $\widetilde{\Ati}= \red{\Ati + R}$.
    If
    $\card {\Verts (\red{\Ati + R})} < \card {\Verts \Ati}$,
    then
    $\red{\Ati + R} = \red{\Ati_{\!\!/\verti \shorteq \vertii} + R'}$,
    where
    $\verti,\vertii$
    are two different vertices in~$\Ati$, and $R' \subseteq \Fn$ is a finite subset of size at most $r - 1$.
\end{lem}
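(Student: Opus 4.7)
Since $\card{\Verts(\red{\Ati+R})} < \card{\Verts\Ati}$, the canonical folding surjection $\pi \colon \Ati+R \twoheadrightarrow \red{\Ati+R}$ cannot be injective on $\Verts\Ati$, so we can fix distinct $\verti,\vertii \in \Verts\Ati$ with $\pi(\verti) = \pi(\vertii)$ (the precise choice will be made below). Letting $u,v$ be reduced labels of walks from $\bp$ to $\verti$ and to $\vertii$ in $\Ati$, the determinism of $\red{\Ati+R}$ together with $\pi(\bp) = \bp$ force the $\bp$-walks in $\red{\Ati+R}$ spelling $u$ and $v$ to both end at the common vertex $\pi(\verti) = \pi(\vertii)$, so $\overline{uv^{-1}}$ labels a reduced $\bp$-walk there, giving $\overline{uv^{-1}} \in \gen{\red{\Ati+R}} = \gen{\Ati+R}$. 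Applying Proposition~\ref{prop: generators from tree} to a spanning tree of $\Ati$ after performing the identification yields $\gen{\Ati_{/\verti\shorteq\vertii}} = \gen{\Ati} \join \langle \overline{uv^{-1}} \rangle$, whence $\gen{\Ati_{/\verti\shorteq\vertii} + R} = \gen{\Ati+R}$, and so $\red{\Ati_{/\verti\shorteq\vertii} + R} = \red{\Ati+R}$ by Theorem~\ref{thm: Stallings bijection}.

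The plan to trim $R$ by one element is to carry out the Stallings reduction of $\Ati+R$ in an order that always defers any folding which would merge the images of two distinct $\Ati$-vertices; by the hypothesis, such a \emph{critical} folding must eventually be forced. I would take the first one: it identifies two arcs $\alpha, \beta$ of the current partially folded automaton sharing their origin and label, whose endpoints merge into the common image of two distinct $\Ati$-vertices, which I select as $\verti$ and $\vertii$. Since $\Ati$ itself is deterministic, $\alpha$ and $\beta$ cannot both be $\Ati$-arcs, so at least one of them---say $\alpha$---has in its preimage in $\Ati+R$ an arc belonging to some petal $\flower{w_i}$; I then set $R' = R \setminus \{w_i\}$, which has cardinality~$r-1$.

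The crux of the argument---and the step I expect to be the main obstacle---is to verify the membership
\[
w_i \ \in\ \gen{\Ati_{/\verti\shorteq\vertii} + R'}.
\]
Heuristically, the petal of $w_i$ is precisely what was forcing the merger $\verti = \vertii$ at the critical folding; once that merger is imposed on $\Ati$ directly, the arc $\alpha$ (coming from $\flower{w_i}$) becomes parallel to---and hence folds with---the arc $\beta$, whose preimage survives inside $\Ati_{/\verti\shorteq\vertii}+R'$ (either as an $\Ati$-arc or as an arc of a remaining petal of $R'$). Walking around $\flower{w_i}$ and substituting $\beta$ for $\alpha$ (and iterating for any subsequent foldings triggered by this substitution) should produce a $\bp$-walk in $\Ati_{/\verti\shorteq\vertii}+R'$ reading $w_i$. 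Granting this membership,
\[
\gen{\Ati_{/\verti\shorteq\vertii}+R'} \ =\ \gen{\Ati_{/\verti\shorteq\vertii}+R} \ =\ \gen{\Ati+R},
\]
whence $\red{\Ati_{/\verti\shorteq\vertii}+R'} = \red{\Ati+R}$ by Theorem~\ref{thm: Stallings bijection}, concluding the proof.
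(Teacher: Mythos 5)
Your first paragraph is fine, and your instinct that the membership $w_i \in \gen{\Ati_{\!\!/\verti \shorteq \vertii} + R'}$ is ``the main obstacle'' is exactly right: that step is a genuine gap, and in fact it is \emph{false} for the selection rule you describe. The flaw in the heuristic is that the petal $\flower{w_i}$ whose arc participates in the critical folding may do two independent jobs: it may (jointly with the other petals) force the merger $\verti = \vertii$ \emph{and} contribute additional generating information that is not recoverable from $\gen{\Ati}$, the single element $\overline{uv^{-1}}$, and the remaining petals. Concretely, take $\Ati = \stallings{\gen{a^2}} \leqslant \Free[2]$ (two vertices $\bp, p$) and $R = \set{bab,\, ba^2}$. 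Folding $\Ati + R$, the petals of $bab$ and $ba^2$ first amalgamate with each other and with $\Ati$ through $\Ati$-free foldings, and the first critical folding identifies $\bp$ with $p$ by folding a $b$-loop at $\bp$ against a $b$-arc from $p$ to $\bp$; the loop has arcs of \emph{both} petals in its preimage, so your rule permits choosing $w_i = ba^2$ and $R' = \set{bab}$. But $\gen{\Ati_{\!\!/\bp \shorteq p} + bab} = \gen{a} \join \gen{bab} = \gen{a, bab}$, in which every element has even $b$-exponent sum, so $ba^2 \notin \gen{a,bab}$, while $\gen{\Ati + R} = \gen{a^2, bab, ba^2} = \Free[2]$. (Here the other choice $w_i = bab$ happens to work, but nothing in your argument selects it, and nothing guarantees a good choice exists in general.)

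The paper's proof sidesteps this entirely by \emph{not} requiring $R'$ to be a subset of $R$. It performs all $\Ati$-free foldings to reach $\Ati'$, uses Corollary~\ref{cor: strict inclusion} and Lemma~\ref{lem: diff of ranks} to pin down $\rk \Ati + 1 \leq \rk \Ati' \leq \rk \Ati + r$, observes that the critical folding is open so $\rk \Ati'' = \rk \Ati'$ while $\rk \Ati_{\!\!/\verti \shorteq \vertii} = \rk \Ati + 1$, and hence gets $\rk \Ati'' - \rk \Ati_{\!\!/\verti \shorteq \vertii} \leq r-1$; a second application of Lemma~\ref{lem: diff of ranks} then \emph{manufactures} a fresh $R'$ of that cardinality from a spanning tree of $\Ati''$ extending one of $\Ati_{\!\!/\verti \shorteq \vertii}$. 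The point is that the $r-1$ surviving generators are read off the folded automaton, not inherited from $R$; the counting is done with ranks rather than with petals. To repair your argument you would need either to prove that some good choice of discarded petal always exists (which your construction does not provide), or to abandon $R' \subseteq R$ and argue by rank as the paper does.
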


\begin{proof}
    First note that the hypothesis
    $\card {\Verts (\red{\Ati + R})} < \card {\Verts \Ati}$
    entails the identification of two (different) vertices in $\Ati$ during the reduction of $\Ati + R$.

    Let us call \defin{$\Ati$-free} any folding that does not produce an identification of
    %(two different)
    vertices in $\Ati$.
    We then let $\Ati'$ be an automaton (note that it may not be uniquely determined) obtained after
    successively performing $\Ati$-free foldings on $\Ati + R$ until no more $\Ati$-free foldings are possible
    (in particular, $\Ati \leqslant \Ati'$ and $\red{\Ati'} = \red{\Ati + R}$).
    Note that $\Ati + R$ is core and so is $\Ati'$. But $\Ati'$ cannot be reduced
    (otherwise~$\red{\Ati + R} = \Ati' \geqslant \Ati$, contradicting the hypothesis that $\card {\Verts (\red{\Ati + R})} < \card {\Verts \Ati}$)
    and therefore strictly contains $\Ati$ as a subautomaton.
    Hence, from Corollary~\ref{cor: strict inclusion} and Lemma~\ref{lem: diff of ranks}, we have that  $\rk \Ati + 1 \leq \rk \Ati' \leq \rk \Ati +r$.

    Since $\Ati'$ is not reduced
    and there are no $\Ati$-free foldings remaining,
    there must be an available folding in $\Ati'$ identifying two different vertices
    (say $\verti , \vertii$)
    in~$\Ati$.
    Note that
    (since $\Ati$ is reduced by hypothesis) at least one of the arcs involved in the folding
    %(say reading $a_i$)
    %(say $\vertiii \xarc{\,  a_i \  } \vertii$)
    must lie outside~$\Ati$.
    Thus
    the following conditions hold:
    \begin{enumerate}[ind]
        % \item $\rk \Ati_{\!\!/\verti \shorteq \vertii} = \rk \Ati + 1$.

        % This follows from Lemma~\ref{lem: graph rank properties}.\ref{item: rk Ati + w}, since we produced the vertex identification in $\Ati$ (losing a vertex in $\Ati$) without losing any edge in $\Ati$.

        \item $ \Ati_{\!\!/\verti \shorteq \vertii}$ is a subautomaton of the automaton $\Ati''$ obtained after performing the folding in $\Ati'$.

        \item $\rk \Ati' = \rk \Ati''$.
        (This follows immediately from \eqref{eq: graph rank},  because we lose exactly one vertex and one positive arc in $\Ati'$ after the folding, keeping the number of connected components equal to $1$.)
    \end{enumerate}
    Therefore:
    \begin{equation} \label{eq: rank - 1}
    \begin{aligned}
    \rk \Ati'' - \rk \Ati_{\!\!/\verti \shorteq \vertii}
    &\,=\,
    \rk \Ati' - \rk \Ati_{\!\!/\verti \shorteq \vertii}\\
    &\,=\,
    \rk \Ati' - \rk \Ati - 1\\
    &\,\leq\,
    \rk \Ati + r - \rk \Ati - 1\\
    &\,=\, r - 1 \,,
    \end{aligned}
    \end{equation}
    where we have used that $\rk \Ati' \leq \rk \Ati +r$.

    Finally, since $\Ati_{\!\!/\verti \shorteq \vertii} \leqslant \Ati''$, it follows from \eqref{eq: rank - 1} and Lemma~\ref{lem: diff of ranks} that there exists a subset $R'\subseteq \Fn$ of size at most $r-1$ such that
    $
    \red{\Ati + R}
    = \red{\Ati''} = \red{\Ati_{\!\!/\verti \shorteq \vertii} + R'}
    $,
    which is what we wanted to prove.
\end{proof}

\section{Complements and coranks of subgroups} \label{sec: complements and coranks}

Given a bounded lattice $(L,\join,\meet)$ with maximum $1$ and minimum $0$, we say that two elements $a,b \in L$ are \defin{$\join$-complementary} (\resp $\meet$-complementary)  if $a \join b = 1$ (\resp $a \meet b = 0$); then we also say that each of the elements is a \defin{$\join$-complement} (\resp \defin{$\meet$-complement}) of the other.
Two elements $a,b \in L$ are said to be
\defin{directly complementary}
if they are both \mbox{$\join$-complementary}
and \mbox{$\meet$-complementary}; that is
if $a \join b = 1$ and $a \meet b =0$;
then we also say that each is a
\defin{direct complement}
of the other.

Let $\SGP(G)$ denote the set of subgroups of a group $G$, and let $H,K \in \SGP(G)$.
We define the \defin{join}
of $H$ and $K$
to be
$H \join K = \gen{H \cup K}$.
It is easy to see that $(\SGP(G),\join,\cap)$ is a bounded lattice with maximum $G$ and minimum the trivial subgroup $\Trivial$.
This immediately provides the corresponding notions of complement in this setting.
If $H \cap K = \Trivial$, then we
say that $H$ and $K$ are
%direct summands
in \defin{direct sum},
and we write
$H \join K = H \oplus K $.
In particular, $H,K \leqslant G$ are directly complementary if and only if $H \oplus K = G$.

\begin{rem} \label{rem: free => oplus => join}
 Note that, if we denote the free product of two subgroups by $H * K$, then $H*K = G \Imp H \oplus K = G \Imp H \join K = G$, but both converses  are false. So, we have three natural increasingly restrictive ways of ``adding'' subgroups; namely joins ($\join$), direct sums ($\oplus$), and free products $(*)$. The corresponding notions of complement are summarized below.
 \end{rem}

 \begin{defi} \label{defi: subgroup complements}
Let $H,K \leqslant G$.
If $H \join K = G$
(\resp $H \oplus K = G$, $H * K = G$)
then we say that
$H$ and $K$ are~$\join$-\defin{complementary}
(\resp $\oplus$-\defin{complementary}, $*$-\defin{complementary})
in $G$, and that each of them is a
\defin{$\join$-complement}
(\defin{$\oplus$-complement}, \defin{$*$-complement})
of the other.
% \R{We denote by
% $\jComp{H}$,
% $\dComp{H}$,
% and
% $\fComp{H}$
% the respective sets of complements of a subgroup $H$.}
\end{defi}

 \begin{rem} \label{rem: conjug}
 We note that all three notions of complement are well behaved with respect to conjugation. More precisely, if $H,K \leqslant G$, then, for every $g \in G$:
 \begin{equation*}
     (H \join K)^{g} = H^{g} \join K^{g},
     \quad
     (H \oplus K)^{g} = H^{g} \oplus K^{g},
     \quad \text{and}
     \quad
     (H * K)^{g} = H^{g} * K^{g},
 \end{equation*}
 understanding that, in every equation, each of the sides is well defined if and only if the other side is well defined.
\end{rem}

We are interested in the behaviour of the complements of finitely generated subgroups within the free group. Concretely, we investigate which subgroups of $\Fn$ admit each kind of complement, and the
properties
of the respective sets of complements.
We note that, from the last remark, these notions work modulo conjugation.
The case of free products has been extensively studied and is well understood.
Namely, from Grushko Theorem, a free complement of a subgroup $H$ of $G$  necessarily has
(minimum and maximum)
rank equal to $\rk (G) - \rk (H)$. Moreover, given a finite subset $S$ of $\Fn$, one can decide whether $\gen{S}$ admits a free complement either using the classical Whitehead's peak reduction argument (see~\cite{WhiteheadEquivalentSetsElements1936}) or,
more efficiently,
using techniques based on the Stallings description of subgroups
(see~\cite{RoigComplexityWhiteheadMinimization2007,
SilvaAlgorithmDecideWhether2008,
PuderPrimitiveWordsFree2014}).
We aim to extend some of these results to our weakened notions of complement.

\begin{defi} \label{def: coranks}
Let $H$ be a subgroup of $G$; then
the
\defin{$\join$-corank}
%(\R{\defin{\resp $\meet$-corank}})
(\resp \defin{$\oplus$-corank})
of $H$
is the minimum
%(\resp maximum)
rank of a $\join$-complement
(\resp $\oplus$-complement)
of $H$.
These are denoted by
$\jcrk(H)$,
and
$\dcrk(H)$
respectively.
\end{defi}

\begin{rem}
Note that
%(from Lemma~\ref{rem: free => oplus => join}),
$\jcrk(H)
\leq
\dcrk(H)$,
whenever they are well defined.
Moreover,
if $H$ is a free factor of $G$,
then $\jcrk(H) = \dcrk(H) = \rk(G) - \rk(H)$,
which immediately follows from Grushko theorem.
In particular $\jcrk(\trivial) = \dcrk(\trivial) = \rk(G)$.
\end{rem}

A natural approach to this kind of questions is through the simplest possible complements, namely cyclic complements. We use the term cocycle to refer to cyclic complements (or their generators). Below is the precise definition in terms of each kind of complement.

\begin{defi}
 A \defin{$\join$-cocycle}
 (\resp \defin{$\cap$-cocycle}, \defin{$\oplus$-cocycle})
 of a subgroup  $H \leqslant \Fn$ is a cyclic $\join$-complement
 (\resp $\cap$-complement, $\oplus$-complement)
 of $H$, or any of its generators. We denote the corresponding sets of cocyles by
 $\Coc_{\join} (H)$, $\Coc_{\cap} (H)$, and $\Coc_{\oplus} (H)$
 respectively.
 A subgroup is said to be \defin{$\join$-cocyclic} (\resp \defin{$\oplus$-cocyclic}) if it admits a cyclic complement of the corresponding kind.
\end{defi}

Note that there is no need for the concept of $\cap$-cocyclic subgroup since admitting a cyclic $\cap$-complement is equivalent to admitting a (general) $\cap$-complement.

\section{Join complements} \label{sec: join}

Recall that a subgroup $K$ of $G$ is a $\join$-complement of $H \leqslant G$ (in $G$) if $H \join K = G$, and we define the $\join$-corank of $H$ to be the minimum possible rank for such a subgroup $K$.
We note that this concept has previously appeared in the literature under other names. Concretely, it appears in~\cite{PuderPrimitiveWordsFree2014} under the name of \emph{distance between subgroups}, where the author also  provides bounds for it and proves its computability.
We will present our own proofs here, obtained independently, for the sake of completeness.

Obviously, every subgroup $H$ of $\Fn$ admits a $\join$-complement of rank at most $n$
(namely~$\Fn$). It is easy to see that the same holds for proper $\join$-complements if the involved subgroup is not trivial.

\begin{lem}
Every nontrivial subgroup of $\Fn$ admits a proper $\join$-complement of rank $n$.
\end{lem}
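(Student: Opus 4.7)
The plan is to produce, for each nontrivial $H \leqslant \Fn$, an explicit proper subgroup $K$ of rank $n$ satisfying $H \join K = \Fn$. First I dispatch $n = 1$: if $H = k\mathbb{Z} \leqslant \mathbb{Z}$, I take $K = m\mathbb{Z}$ for any integer $m \geq 2$ coprime with $k$; existence is immediate. From now on I assume $n \geq 2$ and write $N_i := \gen{a_j : j \neq i} \leqslant \Fn$.

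The main construction will be $K := \gen{a_i h} \join N_i$ for suitable $i \in \{1, \ldots, n\}$ and $h \in H \setminus \{1\}$. Since $H \neq \Trivial$ and $\bigcap_i N_i = \Trivial$ (a nontrivial reduced word must use at least one letter, hence lies outside the corresponding $N_i$), there exists an index $i$ for which $H$ contains an element involving $a_i$. By replacing an initial candidate with its square if needed (using that free groups are torsion-free with unique roots), I further arrange that $\sigma_i(h) \neq -1$, where $\sigma_i$ is the $a_i$-exponent sum, and that $h \notin a_i^{-2} N_i$. These two technical conditions will ensure, respectively, that $K$ has rank exactly $n$ and that $K$ is proper.

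Then I verify three claims. (a)~$H \join K = \Fn$: immediate from $a_i = (a_i h)\, h^{-1} \in H \join K$ together with $N_i \leqslant K$. (b)~$\rk K = n$: the abelianization of the generating set is an $n \times n$ integer matrix of determinant $1 + \sigma_i(h)$, nonzero by the refined choice of $h$; hence $K$'s image in $\mathbb{Z}^n$ has rank $n$, giving $\rk K \geq n$, and equality follows from the $n$-generation. (c)~$K$ is proper: if $\sigma_i(h) \notin \{0, -2\}$, the above determinant has $|\det| \geq 2$ and detects $K \subsetneq \Fn$ already at the abelianization level; otherwise one invokes the free-product decomposition $\Fn = \gen{a_i} * N_i$, observing that the endomorphism $\phi\colon a_i \mapsto a_i h,\ a_j \mapsto a_j$ (for $j \neq i$) fixes $N_i$ pointwise and induces either identity or inversion on the quotient $\gen{a_i}$. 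The classification of automorphisms of a free product fixing one free factor pointwise forces $h \in N_i \cup a_i^{-2} N_i$ for $\phi$ to be surjective, both excluded by construction.

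The hard part will be step (c) in the degenerate sub-case $\sigma_i(h) \in \{0, -2\}$: the abelianization test is inconclusive, and one must either invoke the structure of automorphisms of a free product fixing one factor pointwise, or else carry out an explicit Stallings folding analysis of the flower automaton on the generators of $K$, verifying that $\stallings{K}$ has strictly more than one vertex and hence is not a bouquet, so that $K \neq \Fn$ is proper.
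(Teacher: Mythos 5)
Your steps (a) and (b) are sound, but step (c) fails in exactly the degenerate case $\sigma_i(h)\in\{0,-2\}$ that you identify as the hard part, and the failure is in the construction itself, not merely in the justification. The set of $h$ for which $\phi\colon a_i\mapsto a_ih,\ a_j\mapsto a_j$ is surjective is strictly larger than $N_i\cup a_i^{-2}N_i$: a Stallings folding of the petal for $w=a_ih$ against the bouquet $\stallings{N_i}$ shows that $\gen{a_ih}\join N_i=\Fn$ precisely when $a_ih\in N_i\,a_i^{\pm1}\,N_i$, that is, when $h\in a_i^{-1}N_ia_iN_i\cup a_i^{-1}N_ia_i^{-1}N_i$, and this set properly contains $N_i\cup a_i^{-2}N_i$. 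Concretely, take $n=2$, $A=\{a,b\}$, $H=\gen{a^{-1}bab}\leqslant\Free[2]$, and let $i$ be the index of $a$. Then $h=a^{-1}bab$ involves $a$, has $\sigma_a(h)=0\neq-1$, and satisfies $h\notin a^{-2}\gen{b}$, so it passes every filter you impose; yet
\begin{equation*}
K \,=\, \gen{ah}\join\gen{b} \,=\, \gen{bab,\,b} \,\ni\, b^{-1}(bab)b^{-1}=a ,
\end{equation*}
so $K=\Free[2]$ is not proper. In particular the fallback you propose (checking that $\stallings{K}$ has more than one vertex) cannot rescue the argument: here $\stallings{K}$ genuinely is the bouquet.

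To repair the proof you would have to choose $h$ (or the index $i$, or a power of $h$) so that $a_ih\notin N_ia_i^{\pm1}N_i$ --- equivalently, so that the normal form of $a_ih$ in the free product $\gen{a_i}*N_i$ has either at least two $a_i$-syllables or one $a_i$-syllable of exponent of absolute value at least $2$ --- and this is precisely the step your proposal leaves unaddressed. The paper sidesteps the issue entirely: it first reduces to cyclic subgroups $H=\gen{u}$ with $u$ cyclically reduced (legitimate, since a complement of a nontrivial cyclic subgroup of $H$ is a complement of $H$), and then writes down a rank-$n$ subgroup generated by the conjugates $ua_ju^{-1}$ together with one extra element chosen according to the first and last letters of $u$, whose properness and joining property are read off directly from its Stallings automaton rather than from an abelianization or automorphism argument.
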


% \R{
% \begin{lem}
% Every nontrivial subgroup of $\Fn$ admits a proper $\join$-complement of any rank greater or equal than $n$.
% \end{lem}
% It is enough to conveniently attach an $a$-source of the appropriate rank to some $a$-deficient vertex in the complement of rank $n$.
% }

\begin{proof}
    It is enough to prove the claim for nontrivial cyclic subgroups $\gen{u} \leqslant \Fn$, where $u$ is cyclically reduced.

    If~$n = 1$, then, given a nontrivial subgroup $\gen{a^k} \leqslant
    \ZZ =
    \pres{a}{-}$, it is enough to consider any proper subgroup $\gen{a^l} \leqslant \ZZ$, where $k,l$ are coprime integers.

    If $n\geq 2$, we distinguish two cases:
    \begin{enumerate}[dep]
        \item if the first and last letters of $u$ are equal
        (say
        %$u = a v a$, where
        to
        $a
        \in A^{\pm}
        $), then consider a subgroup of the form
        %generated by %the words
        $ K =
        \gen{ \set{u a_i u^{-1} \st a_i \in A \setmin \set{a}}
        \cup \set{b a b^{-1}}}$, where $b \in A \setmin \set{a}$.

        \item if the first and last letters in $u$ are different (say equal to $b,a \in A^{\pm}$ respectively), then consider the subgroup
         $ K =
          \Gen{ \set{u a_i u^{-1} \st a_i \in A \setmin \set{a}}
         \cup \set{a}} $.
    \end{enumerate}
    Now, from the Stallings representation of $K$ it is clear that
    in both cases:
    $K \neq \Fn$, %(since $\stallings{K} \neq \flower{A}$),
    $H \join K = \Fn$,
    and $\rk(K) = n$.
    We have that $K$ is a proper complement of $H$ of rank $n$,
    and the proof is concluded.
\end{proof}

Our next goal is to compute the $\join$-corank (\ie the minimum possible rank of a $\join$-complement) of a given finitely generated subgroup $H$ of $\Fn$.
Note that,
in the context of free groups,
%if $\Ati = \Stallings{S}$, then
we can restate this notion in graphical terms in the following way:
\begin{equation*}
    \jcrk(H)
    \,=\,
    \min \,
    \set{\,
    |S| \st
     S \subseteq \Fn \text{ and } \red{\stallings{H} + S} = \bouquet{n}
    \,} \,.
\end{equation*}

First, we describe the range of possible values for the $\join$-corank of a
%finitely generated
subgroup of $\Fn$.

\begin{lem} \label{lem: join-corank bounds}
If  $H \leqslant \Free[n]$, then
\begin{equation} \label{eq: join-corank bounds}
    \max\, \set{0,n - \rk (H)}
    \,\leq\,
    \jcrk(H)
    \,\leq\,
    n\,.
\end{equation}
Moreover, for $n > 1$,
this is the only general restriction between the rank and the $\join$-corank of a subset of~$\Fn$. Namely,
for every pair $(r,c) \in [1,\infty] \times [1,n]$ satisfying $n - r \leq c \leq n$ there exists a subgroup of $\Fn$ with rank $r$ and $\join$-corank $c$.
\end{lem}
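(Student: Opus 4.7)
My plan is to handle the two inequalities first, then construct, for every admissible pair $(r,c)$, a subgroup of $\Fn$ realizing them. The upper bound $\jcrk(H) \leq n$ is immediate: $K = \Fn$ is itself a $\join$-complement of rank $n$. For the lower bound, any generating sets of $H$ and $K$ together generate $H \join K$, so $H \join K = \Fn$ forces $n = \rk \Fn \leq \rk H + \rk K$, giving $\rk K \geq n - \rk H$; combined with the trivial $\rk K \geq 0$, this yields $\jcrk(H) \geq \max\{0, n - \rk H\}$.

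For the realization part, my key tool would be the abelianization $\phi \colon \Fn \twoheadrightarrow \ZZ^n$: since $H \join K = \Fn$ implies $\phi(H) + \phi(K) = \ZZ^n$, subadditivity of rank in the free abelian group $\ZZ^n$ yields $\rk K \geq \rk \phi(K) \geq n - \rk \phi(H)$. So it suffices to exhibit, for each admissible $(r,c)$, a subgroup $H \leqslant \Fn$ with $\rk H = r$ and $\rk \phi(H) = n - c$ admitting an explicit rank-$c$ complement: the first condition forces $\jcrk(H) \geq c$, and the second $\jcrk(H) \leq c$.

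For $c < n$ (so $n - c \geq 1$), I would set $k = r - (n-c) \in [0,\infty]$ and take the reduced $A$-automaton $\Gamma_{r,c}$ with basepoint $v_0 = \bp$ carrying the loops $a_1, \ldots, a_{n-c}$, together with a chain of vertices $v_1, \ldots, v_k$ joined by consecutive $a_n$-arcs $v_{i-1} \to v_i$, each $v_i$ ($i \geq 1$) carrying a further $a_1$-loop. The cycle-rank formula \eqref{eq: graph rank} gives $\rk \Gamma_{r,c} = (n-c) + 2k - (k+1) + 1 = n - c + k = r$; using the chain as spanning tree, Proposition~\ref{prop: generators from tree} produces the free basis $\{a_1, \ldots, a_{n-c}\} \cup \{a_n^i a_1 a_n^{-i} : 1 \leq i \leq k\}$ for $H := \gen{\Gamma_{r,c}}$, whose image under $\phi$ spans exactly the rank-$(n-c)$ subgroup $\gen{e_1, \ldots, e_{n-c}} \subseteq \ZZ^n$, while $K = \gen{a_{n-c+1}, \ldots, a_n}$ of rank $c$ visibly satisfies $H \join K = \Fn$. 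For the remaining case $c = n$, I would take $H$ to be any rank-$r$ subgroup of the commutator subgroup $[\Fn, \Fn]$ (which is free of countably infinite rank since $n \geq 2$), forcing $\phi(H) = 0$ and hence $\jcrk(H) = n$, with the trivial complement $K = \Fn$ supplying the reverse inequality.

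The delicate point I expect is ensuring $\rk H = r$ \emph{exactly} while simultaneously keeping $\rk \phi(H) = n - c$. The chain-with-loops construction handles $c < n$ cleanly because each added $a_n$-arc/$a_1$-loop pair contributes one to the cycle rank yet nothing new to the abelianization; for $c = n$ the argument is a priori non-explicit, and I would make it concrete by exhibiting a specific family such as $\gen{[a_1, a_2^i] : 1 \leq i \leq r}$ and verifying its rank via a direct Stallings folding computation, which the earlier sections of the paper render routine.
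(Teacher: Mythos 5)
Your proof is correct and follows essentially the same strategy as the paper's: the bounds come from the same elementary rank count, and the realization of each admissible pair $(r,c)$ is done by an explicit chain-shaped Stallings automaton (whose extra cycles add graph rank without adding rank in a rank-$n$ quotient of $\Fn$) together with an explicit rank-$c$ complement, the only differences being that you certify the lower bound via the abelianization $\ZZ^n$ where the paper projects onto $(\ZZ/2\ZZ)^n$, and that you split off the case $c=n$ separately. These variations are harmless (your family $\gen{[a_1,a_2^i] : 1\leq i\leq r}$ does fold to a rank-$r$ automaton, as claimed), so the argument stands as written.
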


\begin{proof}
It is clear that $0 \leq \jcrk(H) \leq n$ since any basis of $\Fn$ has $n$ elements and is enough to generate $\Fn$. On the other hand, since one cannot generate $\Fn$ with fewer than $n$ elements, it is also clear that $\rk (H) + \jcrk(H) \geq n$. The condition in~\eqref{eq: join-corank bounds} follows.

Let $\Fn = \pres{a_1, a_2, \ldots, a_n}{-}$. For each $c \in [1,n]$ and
each positive $r \geq n-c$,
consider the subgroup $H(r,c)$ generated by
\begin{multline*}
\hspace{60pt}
\set{ a_{c+1}, \ldots , a_n }
%\,\cup\,
 \\
 \,\cup\,
\set{\, (a_1a_2)^ia_1^2(a_2^{-1}a_1^{-1})^i : 2i+1 \in [1, r-n+c] \,}\\
\,\cup\,
\set{\, (a_1a_2)^ja_1a_2^2a_1^{-1}(a_2^{-1}a_1^{-1})^j : 2j+2 \in [2, r-n+c]\,} \,. \hspace{30pt}
\end{multline*}
Its Stallings automaton has loops at the basepoint labelled by $a_{c+1}, \ldots , a_n$ and a chain obtained by concatenating $r-n+c$ alternated cycles labelled by $a_1^2$ and $a_2^2$. For instance, if $n = 3$ then $\Stallings{H(4,2)}$ is
\vspace{-5pt}
\begin{figure}[H]
  \centering
  \begin{tikzpicture}[shorten >=1pt, node distance=1.5 and 1.5, on grid,auto,>=stealth']
   \node[state,accepting] (0) {};
   \node[state] (1) [right = of 0]{};
      \node[state] (2) [right = of 1]{};
   \node[state] (3) [right = of 2]{};
%   \node (legend) [right = 4 of 1]{$(\forall i\in[1,k],\ \forall j\in [k+1,r])$};
%   \node () [left = 4 of 0] {\ };

   \path[->]
        (0) edge[bend left]
            node[above] {\scriptsize{$a_1$}}
            (1)
        edge[loop left,min distance=10mm,in=145,out=215]
            node[left = 0.1] {\scriptsize{$a_3$}}
            (0);
    \path[->]
        (1) edge[bend left]
            node[below] {\scriptsize{$a_1$}}
            (0)
            (1) edge[bend left]
            node[above] {\scriptsize{$a_2$}}
            (2);
    \path[->]
        (2) edge[bend left]
            node[below] {\scriptsize{$a_2$}}
            (1)
        (2) edge[bend left]
            node[above] {\scriptsize{$a_1$}}
            (3);
    \path[->]
        (3) edge[bend left]
            node[below] {\scriptsize{$a_1$}}
            (2);
\end{tikzpicture}
\vspace{-15pt}
\end{figure}
It is clear that
$H(r,c)$ has rank $r$. To compute its $\join$-corank, we define the subgroup
\begin{equation*}
    K(r,c) = \gen{a_3,\ldots,a_c, a_1a_2, a_2(a_2a_1)^r}.
\end{equation*}
The given generating set is clearly a basis (every reduced word on the generators is actually reduced as a word on $A^{\pm}$); hence $K(r,c)$ has rank $c$.

Now $\Stallings{H(r,c)} + \Stallings{K(r,c)}$ has loops at the basepoint labelled by $a_3,\ldots,a_n$. After folding, we also get loops labelled by $a_1$ and $a_2$. Hence $\Stallings{H(r,c)} \join \Stallings{K(r,c)} = \Free[n]$ and so
$\jcrk(H(r,c))
    \,\leq\,
    c$. On the other hand, if we project $\Free[n]$ onto $(\ZZ / 2\ZZ)^n$, then most of the generators of $H(n,c)$ collapse and the image has rank $n-c$. Since $(\ZZ / 2\ZZ)^n$ has rank $n$, then it is impossible to find a $\join$-complement of $H(r,c)$ with rank strictly less than~$c$. Therefore $\jcrk(H(r,c)) = c$ and we are done.
\end{proof}

\begin{lem} \label{lem: inductivestep}
% Let $R$ be a finite subset
% Let $H$ be a finitely generated subgroup of
Let $\Ati$ be a nontrivial finite reduced automaton;
then, for every $r \geq 1$,
the join corank
$\jcrk\gen{\Ati} \leq r$
if and only if
$\jcrk \gen{\Ati_{\!\!/\verti \shorteq \vertii}} \leq r-1$, for some pair of distinct vertices $\verti,\vertii$ in~$\Ati$.
\end{lem}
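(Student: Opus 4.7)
My plan is to prove the biconditional by splitting it into its two directions, using Lemma~\ref{lem: identification first} for the forward implication and exhibiting an explicit ``identifying'' word for the converse.

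For the forward direction, I would start from the assumption $\jcrk \gen{\Ati} \leq r$, which yields a finite set $R \subseteq \Fn$ of size at most $r$ with $\gen{\Ati} \join \gen{R} = \Fn$, equivalently $\red{\Ati + R} = \bouquet{n}$. Since $\Ati$ is nontrivial, $\card{\Verts \Ati} \geq 2$, while $\card{\Verts(\red{\Ati + R})} = \card{\Verts \bouquet{n}} = 1$, so the hypothesis of Lemma~\ref{lem: identification first} is met. Applying it gives two distinct vertices $\verti, \vertii$ of $\Ati$ and a subset $R' \subseteq \Fn$ of size at most $\card{R} - 1 \leq r - 1$ with $\red{\Ati + R} = \red{\Ati_{\!\!/\verti \shorteq \vertii} + R'}$. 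Therefore $\gen{\Ati_{\!\!/\verti \shorteq \vertii}} \join \gen{R'} = \Fn$, yielding $\jcrk \gen{\Ati_{\!\!/\verti \shorteq \vertii}} \leq r - 1$.

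For the converse, the plan is to produce a single word whose addition to $\Ati$ collapses the two given vertices. I would fix a spanning tree $T$ of $\Ati$ (which exists since $\Ati$ is connected), set $u = \red{\lab}(\bp \xleadsto{\scriptscriptstyle{T}} \verti)$, $v = \red{\lab}(\bp \xleadsto{\scriptscriptstyle{T}} \vertii)$, and let $w = uv^{-1}$. The key step is the identity $\red{\Ati + \set{w}} = \red{\Ati_{\!\!/\verti \shorteq \vertii}}$: if we attach the petal $\flower{w}$ at $\bp$ and fold, the $u$-portion of the petal collapses onto the tree path from $\bp$ to $\verti$, the $v^{-1}$-portion collapses onto the tree path from $\vertii$ back to $\bp$, and because the two portions share the central vertex of the petal, this forces $\verti$ and $\vertii$ to be identified while introducing no other new structure. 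Granting this identity, from $\jcrk \gen{\Ati_{\!\!/\verti \shorteq \vertii}} \leq r - 1$ I obtain some $R' \subseteq \Fn$ with $\card{R'} \leq r - 1$ and $\gen{\Ati_{\!\!/\verti \shorteq \vertii}} \join \gen{R'} = \Fn$; then $R = R' \cup \set{w}$ has size at most $r$ and satisfies $\gen{\Ati} \join \gen{R} = \gen{\Ati} \join \gen{w} \join \gen{R'} = \gen{\Ati_{\!\!/\verti \shorteq \vertii}} \join \gen{R'} = \Fn$, giving $\jcrk \gen{\Ati} \leq r$.

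The main obstacle is justifying the folding identity $\red{\Ati + \set{w}} = \red{\Ati_{\!\!/\verti \shorteq \vertii}}$. By Theorem~\ref{thm: Stallings bijection}, it suffices to check that the two sides recognize the same subgroup. One inclusion is immediate: every generator of $\gen{\Ati}$ lies in $\gen{\Ati_{\!\!/\verti \shorteq \vertii}}$ (since every walk in $\Ati$ survives in the quotient), and $w = uv^{-1}$ is the reduced label of a $\bp$-walk in $\Ati_{\!\!/\verti \shorteq \vertii}$ passing through the merged vertex. For the reverse inclusion, any $\bp$-walk in $\Ati_{\!\!/\verti \shorteq \vertii}$ can be broken into maximal subwalks lifting to walks in $\Ati$ separated by transitions between the two preimages of the merged vertex, and each such transition can be absorbed using a conjugate of $w$ by an element of $\gen{\Ati}$, thereby expressing the overall reduced label as a product of elements from $\gen{\Ati} \cup \set{w, w^{-1}}$.
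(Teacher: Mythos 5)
Your proof is correct and follows essentially the same route as the paper: the forward direction is exactly the paper's application of Lemma~\ref{lem: identification first}, and the converse rests on the same identity $\red{\Ati_{\!\!/\verti \shorteq \vertii}} = \red{\Ati + w}$, which the paper simply asserts for ``some element $w$'' while you make it explicit (as $w = \red{uv^{-1}}$ for tree-path labels $u,v$) and justify it. The only nitpick is that your transition-absorbing elements are sandwiches $g_1 w^{\pm1} g_2$ with $g_1,g_2 \in \gen{\Ati}$ rather than literal conjugates, but the conclusion you draw from them is the right one.
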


\begin{proof}
    $[\Rightarrow]$
    If $\jcrk(\gen{\Ati}) \leq r$ then there exists a reduced automaton $\Atii$ of rank at most~$r$ such that~$\red{\Ati + \Atii} = \bouquet{n}$. Then, from Lemma~\ref{lem: identification first}, there exists a reduced automaton~$\Atii'$ of rank at most $r-1$ such that
    $\red{\Ati_{\!\!/\verti \shorteq \vertii} + \Atii'} =  \bouquet{n}$.
    Hence $\jcrk\gen{\Ati_{\!\!/\verti \shorteq \vertii}} \leq r-1$.

    $[\Leftarrow]$
    Since
   $\overline{\Ati_{\!\!/\verti \shorteq \vertii}} = \overline{\Ati + w}$, for some element $w\in\Fn$,
    and
    %(by hypothesis)
    $\jcrk \gen{\Ati_{\!\!/\verti \shorteq \vertii}} \leq r-1$,
    there exists a reduced automaton $\Atii'$ of rank at most $r-1$ such that:
    \begin{equation*}
    \red{\Ati + (w + \Atii') }
    \,=\,
    \red{(\Ati + w) + \Atii'}
    \,=\,
    \red{\Ati_{\!\!/\verti \shorteq \vertii} + \Atii'}
    \,=\,
    \bouquet{n}.
    \end{equation*}
    So, $\gen{w + \Atii'}$ is a $\join$-complement of $\gen{\Ati}$ of rank at most $r$.
    Hence $\jcrk(\gen{\Ati}) \leq r$.
\end{proof}

\begin{thm}[\cite{PuderPrimitiveWordsFree2014}]
\label{thm: join-corank is computable}
    % The $\join$-corank problem is solvable for free groups. That is,
    There exists an algorithm that, given a finite subset
    $S$ of $\Fn$, outputs the $\join$-corank of $\gen{S}$ in $\Fn$.
\end{thm}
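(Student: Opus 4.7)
The plan is to combine the finite range given by Lemma~\ref{lem: join-corank bounds} with the induction principle of Lemma~\ref{lem: inductivestep} to produce a recursive decision procedure. Given the input $S$, first compute the Stallings automaton $\Ati = \stallings{\gen{S}}$ by the standard folding algorithm, which is effective on any finite input and produces a finite reduced automaton. By Lemma~\ref{lem: join-corank bounds} we know $\jcrk \gen{S} \in \{0, 1, \ldots, n\}$, so it suffices to decide, for each $r$ in this range, whether $\jcrk \gen{\Ati} \leq r$, and return the least such $r$.

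To decide $\jcrk \gen{\Ati} \leq r$, I proceed by recursion on $r$. The base case $r=0$ holds precisely when $\gen{\Ati} = \Fn$, equivalently when the reduction of $\Ati$ equals the bouquet $\bouquet{n}$; this is trivially checkable. For $r \geq 1$, Lemma~\ref{lem: inductivestep} asserts that $\jcrk \gen{\Ati} \leq r$ if and only if there exists a pair of distinct vertices $\verti, \vertii$ of $\Ati$ such that $\jcrk \gen{\Ati_{\!\!/\verti \shorteq \vertii}} \leq r-1$. Since $\Ati$ is finite, there are only finitely many such pairs to try; for each one I compute the Stallings reduction $\red{\Ati_{\!\!/\verti \shorteq \vertii}}$ (again finite and reduced, hence a valid input for a recursive call at parameter $r-1$) and recurse. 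If any branch succeeds, return true; otherwise return false.

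Termination is immediate because each recursive call decreases $r$ by one, so the recursion tree has depth at most $n$; moreover, vertex identification decreases the vertex count by one, and Stallings reduction does not increase it, so all intermediate automata remain finite and in fact bounded in size by $\card{\Verts \Ati}$. Hence the overall procedure halts after finitely many steps. The main point that requires care is ensuring the hypotheses of Lemma~\ref{lem: inductivestep} are met at every recursive call: each automaton considered must be a nontrivial finite reduced automaton. Finiteness and reducedness are preserved by the combined operation ``quotient followed by Stallings reduction''; the corner case where the recursion reaches a trivial or complete automaton is precisely handled by the $r=0$ base case (if $\Ati$ has already become $\bouquet{n}$, return true; if it has become trivial, then we would only conclude $\jcrk = n$, consistent with the upper bound). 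This exhausts the argument and proves the theorem.
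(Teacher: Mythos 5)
Your overall strategy coincides with the paper's: bound the corank by Lemma~\ref{lem: join-corank bounds} and decide $\jcrk(H) \leq r$ by recursion on $r$ via Lemma~\ref{lem: inductivestep}, testing all pairs of distinct vertices. However, there is a genuine gap in how the recursion treats single-vertex automata. Lemma~\ref{lem: inductivestep} only applies to automata with at least two vertices (that is what ``nontrivial'' must mean there: with a single vertex the right-hand condition is vacuously false, while the left-hand one need not be), and the operation ``quotient then reduce'' can perfectly well produce a one-vertex automaton whose loops are labelled by a proper subset $B$ of $A$ with $0 < \card{B} < n$. On such an automaton, for $r \geq 1$, your procedure finds no pair of distinct vertices and returns false, even though the correct answer to $\jcrk \leq r$ is true whenever $n - \card{B} \leq r$. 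Your $r=0$ base case only catches the full bouquet $\bouquet{n}$, and your remark that a trivial automaton ``would only conclude $\jcrk = n$'' is not something the procedure as written actually computes --- it simply answers false at every level, including $r = n$, contradicting the known upper bound.

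This failure propagates to wrong top-level answers. Take $n = 3$ and $H = \gen{a, bab^{-1}}$: its Stallings automaton has two vertices, and identifying them yields (after folding) the one-vertex automaton with loops labelled $a$ and $b$, whose join-corank is $1$; hence $\jcrk(H) = 2$. But your recursive call on that one-vertex automaton at parameter $1$ returns false (it is not $\bouquet{3}$ and has no distinct vertices to identify), so every branch fails and the procedure never certifies $\jcrk(H) \leq 2$. The fix is exactly the extra base case in the paper's proof: whenever the current automaton is a bouquet, answer $n - \rk(\Ati) \leq r$ directly instead of invoking Lemma~\ref{lem: inductivestep}. With that one addition, your argument goes through.
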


\begin{proof}
    Let us write $\Ati = \stallings{S}$.
    It is clear that it is enough to be able to decide, for every $k \in [0,n-1]$, whether $\jcrk (\Ati) \leq k$.
    We proceed by induction on $k$. The case $k = 0$ being trivial, we assume that $k > 0$ and that $\jcrk (\Ati) \leq k-1$ is decidable.

    Suppose first that $\Ati$ is a bouquet (\ie it has a single vertex); then the $\join$-corank problem is trivial: $\jcrk{\Ati} = n-\rk(\Ati)$. Hence we may assume that $\Ati$ has at least two vertices.

    In view of Lemma~\ref{lem: inductivestep}, we have $\jcrk (\Ati) \leq k$
if and only if
$\jcrk \gen{\Ati_{\!\!/\verti \shorteq \vertii}} \leq k-1$, for some pair of different vertices $\verti,\vertii$ in~$\Ati$. By the induction hypothesis, we can check if this condition holds for every pair of vertices.
\end{proof}

\begin{cor}
    Let $H$ and $K$ be two
    finitely generated
    subgroups of $\Fn$
    (given by respective finite generating sets);
    then one can
    algorithmically
    decide whether $H\leqslant K$, and, if so, compute the $\join$-corank of $H$ in $K$.
\end{cor}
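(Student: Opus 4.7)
The plan is to reduce the corollary to Theorem~\ref{thm: join-corank is computable} by realizing $K$ itself as a finitely generated free group and rewriting the generators of $H$ in a free basis of $K$.

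First, to decide whether $H\leqslant K$ I would compute the Stallings automaton $\stallings{K}$ and test, for each of the finitely many given generators $h_i$ of $H$, whether the reduced word $\red{h_i}$ labels a $\bp$-walk in $\stallings{K}$. Since $\stallings{K}$ is deterministic and finite, each such membership test is effective (this is one of the classical consequences of the Stallings construction noted after Theorem~\ref{thm: Stallings bijection}), and $H \leqslant K$ holds precisely when all these tests succeed.

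Now suppose $H \leqslant K$, and set $m = \rk(K)$, which is computed directly from $\stallings{K}$ via \eqref{eq: graph rank}. I choose a spanning tree $T$ of $\stallings{K}$ and apply Proposition~\ref{prop: generators from tree} to obtain a free basis $B = S_T$ of $K$, so that $K$ is canonically isomorphic to $\Free[B] \cong \Free[m]$. For each generator $h_i \in H$, I trace the unique $\bp$-walk $\walki_i$ in $\stallings{K}$ that reads $\red{h_i}$, and decompose it into maximal tree-segments alternating with single non-tree arcs; each non-tree arc traversed contributes one letter of $B^{\pm}$ (inverted or not, depending on orientation), producing a word $w_i$ over $B^{\pm}$ such that $h_i$ and $w_i$ represent the same element of $K$. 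Collecting these yields a finite generating set $\set{w_1,\dots,w_k}$ for $H$ written in the basis $B$.

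Since the $\join$-corank is an isomorphism invariant of the pair (subgroup, ambient group), the $\join$-corank of $H$ in $K$ equals the $\join$-corank of $\gen{w_1,\dots,w_k}$ in $\Free[B] \cong \Free[m]$, and the latter is computable by Theorem~\ref{thm: join-corank is computable}. The main obstacle is the intermediate Schreier-style rewriting step that produces the $w_i$ from the $h_i$, but this is routine: while running $\red{h_i}$ through $\stallings{K}$, one only needs to record each non-tree arc traversed, together with its orientation, to read off the letters of $w_i$ relative to the chosen basis $S_T$.
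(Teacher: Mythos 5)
Your proposal is correct and follows essentially the same route as the paper: test membership of each generator of $H$ by reading it as a $\bp$-walk in $\stallings{K}$, use the spanning-tree basis from Proposition~\ref{prop: generators from tree} to rewrite those generators as words over a free basis of $K$, and then invoke Theorem~\ref{thm: join-corank is computable} inside $K \cong \Free[m]$. The only difference is that you spell out the Schreier-style rewriting step explicitly, which the paper leaves implicit.
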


\begin{proof}
The first claim immediately follows from the solvability of the membership problem for free groups.
Namely, $H \leqslant K$ if and only if every one of the given generators for $H$ belongs to $K$, which can be checked by trying to read them as \mbox{\bp-closed} walks in $\Stallings{K}$.
In the affirmative case, the previous procedure provides the expression of the given $H$-generators as words in some
%\R{(cyclomatic)}
free basis
$\mathcal{B}$ of~$K$. Hence, the $\join$-corank of $H$ in $K$ is exactly the $\join$-corank of the
(subgroup generated by the)
new words in $\Free[\mathcal{B}]$, which is computable using Theorem~\ref{thm: join-corank is computable}.
\end{proof}

We show next that, for a fixed ambient $\Fn$ (and its canonical basis), we can compute the corank of a finitely generated subgroup $H$ of $\Fn$ in polynomial time with respect to the
size of $H$ (the number of vertices of $\Stallings{H}$).
% To provide a precise statement, we recall now the big $O$ notation from complexity theory. Let $f,g:\mathbb{N} \to \mathbb{R}^+$ be functions. Then $f$ is $O(g)$ if there exist constants $K,N \in \mathbb{N}$ such that $f(n) \leq Kg(n)$ for every $n \geq N$.

\begin{cor}
Let $n \geq 1$ be fixed. There exists an algorithm which computes the $\join$-corank of a given $H \leq_\fg \Fn$ of size $m$ in time $O(m^{2(n+1)})$.
\end{cor}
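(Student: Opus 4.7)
The plan is to turn the recursive characterization of Lemma~\ref{lem: inductivestep} into a bounded-depth search and tally the work, treating $n$ as a constant throughout. After computing $\Stallings{H}$ (via Touikan's folding algorithm, or even a naive implementation), we have a reduced automaton with $m$ vertices and, since the alphabet has fixed size $2n$, at most $O(m)$ arcs.

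The central routine is a recursive procedure that, given a reduced automaton $\Ati$ of current size $m'$ and an integer $k \geq 0$, decides whether $\jcrk \gen{\Ati} \leq k$. For the base case $k = 0$, the condition is equivalent to $\Stallings{\gen{\Ati}} = \bouquet{n}$, which for a reduced~$\Ati$ amounts to $\Ati$ having a single vertex and can be tested in $O(1)$. For $k \geq 1$, Lemma~\ref{lem: inductivestep} tells us $\jcrk \gen{\Ati} \leq k$ if and only if $\jcrk \gen{\Ati_{\!\!/\verti \shorteq \vertii}} \leq k - 1$ for some pair of distinct vertices $\verti, \vertii$ in $\Ati$; we enumerate the $\binom{m'}{2} = O((m')^2)$ such pairs, form each quotient, fold to obtain its Stallings reduction (an automaton of size at most $m' - 1$), and recurse at level~$k - 1$. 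Since $\jcrk \gen{H} \leq n$ by Lemma~\ref{lem: join-corank bounds}, running the procedure on $\Stallings{H}$ for $k = 0, 1, \ldots, n$ and returning the smallest $k$ that succeeds yields the exact $\join$-corank.

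Let $T(k, m)$ denote the worst-case cost of the procedure on an input of size at most $m$. A single quotient and its Stallings reduction cost $O(m^2)$ even by a naive folding: the positive arcs number $O(m)$, each folding strictly reduces this count, and each folding opportunity is located in $O(m)$ time. This yields the recurrence
\begin{equation*}
T(k, m) \,=\, O(m^2) \cdot \bigl(O(m^2) + T(k - 1,\, m - 1)\bigr), \qquad T(0, m) \,=\, O(1),
\end{equation*}
which unfolds to $T(k, m) = O(m^{2(k+1)})$. Hence the total cost is $T(n, m) = O(m^{2(n+1)})$, as claimed.

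The one step requiring care is justifying the $O(m^2)$ estimate for the Stallings reduction after a single vertex identification; once that is in hand, the stated complexity bound follows mechanically from the recurrence, with the constants and logarithmic factors absorbed by the $O(\cdot)$ notation. Sharper folding implementations would only improve the hidden constants, not the exponent.
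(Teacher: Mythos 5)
Your overall plan---iterating Lemma~\ref{lem: inductivestep} to depth at most $n$, folding after each single identification, and bounding the cost by the recurrence $T(k,m)=O(m^2)\bigl(O(m^2)+T(k-1,m-1)\bigr)$---is sound in outline and gives the right exponent. The paper organises the same count differently (it enumerates all subsets $I\subseteq \Verts\times\Verts$ with $|I|\leq n'$ at once, folds each $\Stallings{H}/I$ in one pass via Touikan's algorithm, and reads off the corank as $n-n'+r$ for the smallest successful $r=|I|$), but both versions amount to $O(m^{2n})$ folding calls at $O(m^2)$ each, so this is essentially the same argument with incremental rather than batch bookkeeping.

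There is, however, a concrete step that fails: your recursion mishandles bouquets. First, at $k=0$ the correct test is $\Ati=\bouquet{n}$, not merely ``$\Ati$ has a single vertex''; a reduced single-vertex automaton may be $\bouquet{n'}$ with $n'<n$ (e.g.\ $\Stallings{\gen{a}}$ inside $\Free[\set{a,b}]$ has one vertex but $\join$-corank $1$). Second, and more seriously, Lemma~\ref{lem: inductivestep} is only applicable to automata with at least two vertices: when $\Ati=\bouquet{n'}$ with $n'<n$ there are no pairs of distinct vertices, so the right-hand side of the equivalence is vacuously false even though $\jcrk\gen{\Ati}=n-n'$ may well be $\leq k$. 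Your recursion does reach such automata---the quotient-and-fold step can collapse everything to one vertex while budget remains, and the top-level input may itself be a proper bouquet---and on those inputs the procedure as written reports failure at every level $k$ and never returns the corank. This is exactly why the paper's proof of Theorem~\ref{thm: join-corank is computable} dispatches the bouquet case separately ($\jcrk = n-\rk(\Ati)$) before invoking Lemma~\ref{lem: inductivestep}; you need the same guard at every level of the recursion (on a bouquet $\bouquet{n'}$, succeed iff $k\geq n-n'$). With that base case added the algorithm is correct, and your complexity analysis, including the $O(m^2)$ naive folding bound, goes through unchanged.
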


\begin{proof}
We assume that $H$ is given through its Stallings automaton which, in turn, can be assumed to have more than one vertex.
Let $\Verts$ denote the vertex set of $\Ati = \Stallings{H}$, and let $m = \card \Verts$. Given $I \subseteq \Verts \times \Verts$, let $\Ati/I$ denote the involutive automaton obtained from $\Ati$ by identifying all pairs of vertices belonging to $I$. Let $n'$ be the number of letters from the canonical basis of $\Fn$ labelling edges in $\Stallings{H}$. In view of the proof of Theorem \ref{thm: join-corank is computable},
and folding being confluent,
in order to compute the corank of $H$ it would suffice to do the following:
\begin{itemize}
    \item to fold $\Stallings{H}/I$ for every $I \subseteq \Verts \times \Verts$ with $|I| \leq n'$, checking whether we get a bouquet;
    \item to register the smallest $r = |I|$ yielding a bouquet.
\end{itemize}
The corank of $H$ is then $n - n' + r$.

Thus it is enough to consider $O(m^{2n})$ subsets of $\Verts \times \Verts$.
Indeed, $|\Verts \times \Verts| = m^2$ and $\binom{m^2}{j} \leqslant m^{2j}$ for every $j \leqslant n$, yielding
\begin{equation*}
\textstyle{
\sum_{j=0}^n \binom{m^2}{j}
\,\leqslant\,
\sum_{j=0}^{n} m^{2j}
\,\leqslant\,
2m^{2n} .
}
\end{equation*}
By \cite[Theorem 1.6]{TouikanFastAlgorithmStallings2006}, we
know that we
can fold an involutive automaton with $v$ vertices and $e$ arcs in time $O(e + (v+e)\log^*(v))$, where $\log^*(v)$ is the smallest positive integer $k$ such that the $k$-fold iteration
%$\log_2^k$
of the base 2 logarithm
%$\log$
satisfies $\log_2^k(m) \leqslant 1$.

In the case of $\Stallings{H}/I$ we have $|\Verts| \leqslant m$, $|\Edgs| \leqslant 2mn$ and, although $m$ is a very coarse upper bound for $\log^*(m)$, it suffices to show that folding each $\Stallings{H}/I$
can be performed in time $O(m^2)$.
Therefore we can compute the $\join$-corank of $H$ in time $O(m^{2n}m^2) = O(m^{2(n+1)})$.
\end{proof}

\begin{lem} \label{lem: join-coc rational}
\label{pcor}
The set of $\join$-cocycles of a finitely generated subgroup $H$ of $\Fn$ is rational and we can effectively compute a finite automaton recognizing it.
\end{lem}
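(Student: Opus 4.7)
The plan is to construct an effectively computable finite involutive automaton $\mathcal{B}$ over $A^{\pm}$ such that the free reduction of its language (intersected with $\Red_n$) equals $\Coc_\join(H)$; Benois' theorem then yields the desired rationality as a subset of $\Fn$. Let $\Ati = \stallings{H}$, with vertex set $V$ and basepoint $\bp$. By the characterization $w \in \Coc_\join(H)$ iff $\overline{\Ati + w} = \bouquet{n}$, the problem reduces to identifying those $w$ whose petal $\flower{w}$ folds into $\Ati$, collapsing it to a single vertex that realizes all $n$ letters as loops.

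The state space of $\mathcal{B}$ is designed to capture the minimal combinatorial data tracking the folding of $\Ati + \flower{w}$ as $w$ is read letter by letter: a current position as a class of a partition $\pi$ of $V$, the partition $\pi$ itself (recording all vertex identifications forced so far), and a subset $M \subseteq A$ (marking letters realized as new loops via the reading). Finiteness of $V$ and $A$ bounds the number of states. Transitions come in two flavours: deterministic moves following an existing arc of $\Ati/\pi$ from the current class (leaving $\pi$ and $M$ unchanged), and nondeterministic ``virtual'' moves representing a petal-edge that will eventually be folded, which update $\pi$ by Stallings-closure (to propagate cascading identifications) and add the letter to $M$ when appropriate. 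The initial state is $(\bp, \pi_0, \varnothing)$ with $\pi_0$ discrete; accepting states are those in which $\pi$ has collapsed to a single class, the current position equals the class of $\bp$, and $M$ together with the labels already present at the basepoint class of $\Ati/\pi$ exhausts $A$. Since $V$ and $A$ are finite and the Stallings-closure is computable, $\mathcal{B}$ is effectively constructible.

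The main obstacle will be proving correctness: that a reduced word $w$ labels a $\bp$-walk of $\mathcal{B}$ if and only if $\overline{\Ati + w} = \bouquet{n}$. For the forward direction, one exhibits an accepting run by carefully aligning the simulation with the actual trajectory of foldings in $\Ati + \flower{w}$, using the confluence of the folding process (Remark~\ref{rem: effect of foldings}) to serialize the foldings in an order compatible with a left-to-right scan of~$w$. For the converse, one shows that any accepting run of $\mathcal{B}$ can be realized by a genuine folding sequence of $\Ati + \flower{w}$ producing the bouquet, using Lemma~\ref{lem: identification first} iteratively to convert the simulated ``virtual'' identifications into actual ones. Once correctness is established, intersection with the rational language $\Red_n$ and one application of Benois' theorem yield the rationality of $\Coc_\join(H)$ as a subset of $\Fn$, and the construction of $\mathcal{B}$ is manifestly algorithmic.
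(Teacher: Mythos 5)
Your route is genuinely different from the paper's, and the gap sits exactly where you yourself locate ``the main obstacle'': the correctness of the simulating automaton $\mathcal{B}$. The difficulty is not a technicality that confluence will dissolve. The nondeterministic ``virtual'' moves must guess where a petal edge (and the petal vertex it leads to) will end up after folding, and every natural way of constraining those guesses breaks one direction of correctness. If guesses may posit identifications that are not yet forced, soundness fails: take $H = \gen{b, aba^{-1}} \leqslant \Free[2]$, whose Stallings automaton has vertices $\bp$ and $p$ with $b$-loops at both and an $a$-arc $\bp \to p$, and take $w = a^2$. Reading the second $a$ from $p$ triggers a virtual move; a run that guesses the petal endpoint lands on $p$ fabricates a coincidence of two $a$-arcs into $p$, whose Stallings closure collapses $\pi$ to a single class and leads to acceptance --- yet $\overline{\Ati + a^2}$ has two vertices and $a^2 \notin \Coc_{\join}(H)$. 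If instead only identifications forced by data already read are admitted, completeness fails, because the folding that absorbs an early petal edge may be forced only by a later portion of $w$: the folding process is not serialized along a left-to-right scan of the petal, and Lemma~\ref{lem: identification first} controls ranks and vertex counts, not the timing of individual foldings. A genuine repair would have to guess the final partition up front and certify a posteriori that it is generated by real coincidences; none of that is in your sketch.

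The paper avoids the whole issue with one structural observation that your construction misses: if $\Ati = \stallings{H}$ has at least two vertices, then the petal of any $\join$-cocycle $w$ must be \emph{entirely} absorbed by $\Ati$, since otherwise $\Ati$ survives as a subautomaton of $\overline{\Ati + w}$, which then cannot be a one-vertex bouquet. Hence $w$ factors as a reduced product $w = u_1 u_2$ with $u_1 \in \lang[\bp\verti]{\Ati}$ and $u_2 \in \lang[\vertii\bp]{\Ati}$, the only new identification created is $\verti = \vertii$, and
\begin{equation*}
\Coc_{\join}(H)
\,=\,
\bigcup_{(\verti,\vertii)}
\bigl(\lang[\bp\verti]{\Ati}\,\lang[\vertii\bp]{\Ati}\bigr) \cap \Red_A ,
\end{equation*}
the union running over the finitely many (computable) pairs with $\overline{\Ati/_{\verti = \vertii}} = \bouquet{n}$; the case where $\Ati$ is a bouquet is handled separately by inspection. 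No partition-tracking is needed, and Benois' theorem is not even required since the right-hand side is manifestly a finite union of products of rational languages. I recommend you prove the absorption fact first; it collapses your entire state space to the single pair $(\verti,\vertii)$.
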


\begin{proof}
Let $\Fn = \pres{A}{-}$. Assume first that $\stallings{H}$ has a single vertex, that is, that $\stallings{H}$ is the bouquet $\stallings{H}$ for some~$B \subseteq A$.

We now assume that $\card{B} < \card{A} - 1$. It is immediate that we have no $\join$-cocycles. If $B = A$, then
$\Coc_{\join}(H) =  \Free[A]$ and we are also done.
Thus we may assume that $B = A \setminus \{ a \}$ for some $a \in A$. In this case, it is easy to see that
\begin{equation*}
\Coc_{\join}(H)
\,=\,
\Free[B] \, \set{ a,a^{-1}} \, \Free[B] \,,
\end{equation*}
where $\Red_B$ denotes the set of reduced words on the alphabet $B^{\pm}$. Hence $\Coc_{\join}(H)$ is rational whenever $\stallings{H}$ is a bouquet.

Thus we may assume that $\Ati = \stallings{H}$
has at least two vertices. Let $\Verti$ denote the set of all pairs of vertices $(\verti,\vertii) $ in $\Ati$ such that $\Ati/_{\verti = \vertii}$ is the bouquet ${\mathcal{B}}_n$ on $n$ letters. We claim that
\begin{equation} \label{pcor1}
    \Coc_{\join}(H)
    \,=\
    \bigcup_{
    \mathclap{(\verti,\vertii) \in \Verti}} \;
    (\lang[\bp\verti]{\Ati} \,
    \lang[\vertii\bp]{\Ati}) \cap \Red_A.
\end{equation}
Indeed, let $u \in \Red_A$. To compute the Stallings automaton of $H \join u$, we glue a cycle labeled by $u$ to the basepoint of $\Ati$ and fold. If some edge of this new cycle is not absorbed by $\Ati$ in the folding process, then $\Ati$ will be a subautomaton of $\stallings{H \join u}$. Since $\Ati$ has at least two vertices, then we won't get the desired bouquet. Hence we may assume that $u$ admits a reduced factorization $u = u_1u_2$ such that:
\begin{itemize}
    \item there exist paths $\bp \xleadsto{u_1} \verti$ and $\vertii \xleadsto{u_2} \bp$ in $\stallings{H}$;
    \item $\Ati/_{\verti = \vertii} = {\mathcal{B}}_n$.
\end{itemize}
Therefore (\ref{pcor1}) holds and we are done.
\end{proof}

\section{Meet complements} \label{sec: meet}

Again, it is obvious that every subgroup admits a $\cap$-complement (namely, the trivial subgroup).
In addition, if a subgroup admits a nontrivial  $\cap$-complement then it admits a nontrivial $\cap$-cocycle as well.
% \begin{lem} \label{lem: proper cap-compl characterization}
% The following conditions are equivalent for a subgroup $H$ of a torsion-free group $G$:
% \begin{enumerate}[dep]
%     \item \label{item: H admits meet-complement}
%     $H$ admits a nontrivial  $\cap$-complement in $G$;
%     %\item $H$ intersects trivially with some nontrivial subgroup of $\Fn$;
%     \item \label{item: H admits meet-cocycle}
%     $H$ admits a nontrivial $\cap$-cocycle in $G$;
%     \item \label{item: there exists a no-root of H}
%     there exists $g \in G$ such that $g^k \notin H$  for every integer $k \geq 1$.
%     %\item \label{item: not all roots} not every element in $\Fn$ is a root of $H$.
% \end{enumerate}
% \end{lem}

% \begin{proof}
% The implications
% $[\ref{item: H admits meet-complement}
% \Leftrightarrow
% \ref{item: H admits meet-cocycle}
% \Leftarrow
% \ref{item: there exists a no-root of H}]$ trivially hold
% for any
% (\emph{not necessarily torsion-free})
% group.
% For
% $[
% \ref{item: H admits meet-cocycle}
% \Rightarrow
% \ref{item: there exists a no-root of H}
% ]$,
% let $g \in G \setmin \set{\trivial}$ be such that $\gen{g} \cap H = \Trivial$.
% Then,
% since $G$ is torsion-free,
% for every $k \geq 1$, $g^k \neq \trivial$, and therefore
% $g^k \notin H$.
% \end{proof}

\begin{lem} \label{lem: incomplete => meet-compl}
If $\Stallings{H}$ is incomplete then $H$ admits a $\cap$-complement of any
rank.
\end{lem}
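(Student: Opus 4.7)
The plan is to exploit the incompleteness of $\stallings{H}$ at some vertex $v$ (missing, say, an outgoing $a$-arc for some $a \in A$) by constructing $K$ as a conjugate of a carefully chosen subgroup $W_r$ whose nontrivial elements are forced to pass through this ``gap'' and hence fall outside $H$. Fix such a pair $(v,a)$, and let $u \in \Fn$ be the reduced label of some walk from $\bp$ to $v$ in $\stallings{H}$. The last letter of $u$ cannot be $a^{-1}$, for otherwise the last arc of that walk would provide, in the involutive representation, an outgoing $a$-arc at $v$, contradicting the choice of $v$.

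For $r = 0$ take $K = \trivial$. For $r \geq 1$ we may assume $n \geq 2$, since for $n = 1$ the only $H$ with $\stallings{H}$ incomplete is $H = \trivial$, which clearly admits $\cap$-complements of every attainable rank. Fix any $b \in A \setminus \set{a}$ and consider
\begin{equation*}
    W_r \,=\, \gen{\, b^i a b^{-i} \st i \in [1,r] \,} \,\leqslant\, \Fn,
\end{equation*}
with the obvious extension to $r = \infty$. A direct folding computation on $\flower{\set{b^i a b^{-i} \st i \in [1,r]}}$ shows that $\stallings{W_r}$ is a $b$-path $\bp' \xarc{b} y_1 \xarc{b} \cdots \xarc{b} y_r$ with an $a$-loop attached at each $y_i$; by \eqref{eq: graph rank}, $\rk W_r = 2r - (r+1) + 1 = r$. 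Moreover, $\bp'$ has $y_1$ as its unique involutive neighbour, so every nontrivial reduced element of $W_r$ begins with $b$ and ends with $b^{-1}$; in particular, none begins with $a^{-1}$ nor ends with $a$.

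Set $K = (ua) W_r (ua)^{-1}$. Conjugation preserves rank, so $\rk K = r$. Any nontrivial element of $K$ is of the form $u a w a^{-1} u^{-1}$ with $w \in W_r$ nontrivial reduced; since $u$ ends in a letter different from $a^{-1}$, $w$ begins with $b \neq a^{-1}$, and $w$ ends with $b^{-1} \neq a$, the expression $u a w a^{-1} u^{-1}$ is freely reduced. Reading it from $\bp$ in $\stallings{H}$ follows $u$ as far as $v$ and then stalls, as $v$ has no outgoing $a$-arc; therefore $u a w a^{-1} u^{-1} \notin H$, yielding $H \cap K = \trivial$ as desired.

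The only genuinely technical step is verifying the claimed shape of $\stallings{W_r}$ (from which the rank and the first/last-letter properties follow immediately). This is obtained by tracking the foldings of $\flower{\set{b^i a b^{-i} \st i \in [1,r]}}$ along the common $b$-prefixes of its petals, which propagate in a completely controlled way; I do not foresee any real obstacle.
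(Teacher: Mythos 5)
Your proof is correct and follows essentially the same route as the paper's: both realize the complement as a conjugate by $ua$ (the paper's $wa$) of a subgroup of the prescribed rank hanging off the $a$-deficient vertex, so that every nontrivial element has a reduced form beginning with $ua$ and therefore cannot be read from the basepoint of $\stallings{H}$. Your explicit choice of $W_r$, where the paper allows an arbitrary digraph $\Delta$, just makes the no-cancellation check at the junctions fully explicit.
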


\begin{proof}
Let $\verti$ be a vertex in $\Stallings{H}$ with no outgoing arc labelled by (say) $a \in A$, and let $w \in \Fn$ be a reduced word reading a walk from the basepoint to $\verti$ in $\Stallings{H}$. Then the subgroup recognized by any automaton of the form \smash{$\bp \xleadsto{wa}\Delta$}, where $\Delta$ is a labelled digraph (\emph{of any rank}) sharing no vertex with the walk labelled by $w$,
has trivial intersection with~$H$.
\end{proof}

The characterization of the \emph{finitely generated}
proper $\cap$-complements in $\SGP(\Fn)$ follows from the previous result and a well-known general property of finite index subgroups.

\begin{prop} \label{prop: meet-complementable <=> inf index}
A finitely generated subgroup of $\Fn$ admits a nontrivial $\cap$-complement
(of any rank)
if and only if it has infinite index in $\Fn$.
Hence it is decidable whether a given finitely generated subgroup of $\Fn$ admits a nontrivial $\cap$-complement.
\end{prop}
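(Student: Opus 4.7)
The plan is to reduce the proposition to the two results immediately above it, plus the classical fact that a finite-index subgroup of a torsion-free group meets every nontrivial subgroup nontrivially.

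For the forward implication, suppose $H$ is finitely generated and $\ind{\Fn}{H} = \infty$. Since $H$ is finitely generated, $\stallings{H}$ is finite, so by Remark~\ref{rem: fi iff finite and complete} it cannot be complete. Now Lemma~\ref{lem: incomplete => meet-compl} furnishes, for every prescribed rank, a $\cap$-complement of $H$; choosing any positive rank yields a nontrivial one. In particular, all ranks (including $\infty$) are realized.

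For the backward implication I would argue the contrapositive: if $H$ has finite index $d = \ind{\Fn}{H}$, then $H$ meets every nontrivial $K \leqslant \Fn$ nontrivially. Pick any $g \in K \setmin \{1\}$; since $\Fn$ is torsion-free, the cosets $H, Hg, Hg^2, \dots, Hg^d$ cannot all be distinct, so $Hg^i = Hg^j$ for some $0 \leq i < j \leq d$, giving $g^{j-i} \in H \cap \gen{g} \subseteq H \cap K$, and $g^{j-i} \neq 1$. Hence no nontrivial $\cap$-complement of $H$ can exist.

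For decidability, given a finite generating set $S$ of $H$ we compute $\stallings{H}$ (Theorem~\ref{thm: Stallings bijection}, effectively) and check whether every vertex has outgoing arcs for every letter of $A$: by Remark~\ref{rem: fi iff finite and complete} this exactly decides whether $\ind{\Fn}{H} < \infty$, which by the equivalence just proved decides the existence of a nontrivial $\cap$-complement.

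No step is genuinely delicate here; the only thing worth checking is that the $\cap$-complement produced by Lemma~\ref{lem: incomplete => meet-compl} is guaranteed to be nontrivial, which follows by choosing the labelled digraph $\Delta$ in that lemma to contain at least one edge (e.g.\ a single loop), so that the resulting automaton recognizes a nontrivial subgroup.
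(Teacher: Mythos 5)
Your proposal is correct and follows essentially the same route as the paper: one direction via Remark~\ref{rem: fi iff finite and complete} and Lemma~\ref{lem: incomplete => meet-compl}, the other via the pigeonhole argument that finite index forces some power $g^k$ of any nontrivial element into $H$, combined with torsion-freeness of $\Fn$. Your closing remark about choosing $\Delta$ with at least one edge to guarantee nontriviality is a sensible (if minor) point the paper leaves implicit.
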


\begin{proof}
$[\Rightarrow]$
If $H$ is a subgroup of finite index of $\Fn$ then the set $\set{H u^k \st k\in \NN}$ must be finite for every $u \in \Fn$. Therefore, for every $u \in \Fn$, there exists $k\geq 1$ such that $u^k \in H$. Since $\Fn$ is torsion-free, it follows that $H$ admits no nontrivial $\cap$-complement.

$[\Leftarrow]$ Since a subgroup $H$ of $\Fn$ has finite index if and only if its Stallings automaton is finite and complete
(see Remark~\ref{rem: fi iff finite and complete}),
the claim follows from Lemma~\ref{lem: incomplete => meet-compl}.
\end{proof}

\begin{lem} \label{lem: char cr meet-cocycles}
\label{nmcom}
Let $H \leqslant \Fn
%= \pres{A}{-}
$ be finitely generated,
let $m$ be the number of vertices of $\Ati = \stallings{H}$,
and let $u \in \Cred_{n} \setminus \{ 1\}$;
then, the following statements are equivalent:
\begin{enumerate}[dep]
    \item\label{item: nontrivial meet complement}
    $\gen{u} \cap H = \Trivial$
    \quad(\ie $\gen{u}$ is a (nontrivial) $\cap$-complement of $H$);
    \item\label{item: for all notin}
    $u^{k} \notin H$ for every $k \geq 1$;
        \item\label{item: factorial notin}
    $u^{m!} \notin H$;
    \item\label{item: for finitely many notin}
    $u^{k} \notin H$ for every $k = 1, \ldots,m$;
    \item\label{item: no m-walks}
    $u^m \notin \lang[\sbp \Verts\Ati]{\Ati}$
    \quad(\ie $u^m$ cannot be read from the basepoint within $\Ati$).
\end{enumerate}
\end{lem}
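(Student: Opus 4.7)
The whole argument will pivot on the following observation: since $\stallings{H}$ is deterministic (and involutive), the partial transformation $v \mapsto v \cdot u$ sending a vertex to the endpoint of reading $u$ there is injective, as one can recover its unique preimage by reading $u^{-1}$ backwards. Because $u$ is cyclically reduced each $u^k$ remains reduced, so reading $u^k$ from a vertex $v$ is nothing but the $k$-fold iterate $v \cdot u^k$. I will set $v_0 = \bp$ and $v_i = v_{i-1} \cdot u$ whenever defined, noting that $u^k \in H$ exactly when $v_k = \bp$, and that any coincidence $v_i = v_j$ (with $0 \leq i < j$) transfers back through the partial inverse to give $v_0 = v_{j-i}$, yielding $u^{j-i} \in H$.

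With this setup, the easy implications fall quickly. (1)$\,\Leftrightarrow\,$(2) is immediate since $\Fn$ is torsion-free; (2)$\,\Rightarrow\,$(3) and (2)$\,\Rightarrow\,$(4) are definitional; and for (3)$\,\Rightarrow\,$(4), if $u^k \in H$ for some $1 \leq k \leq m$ then $k \mid m!$, so $u^{m!} = (u^k)^{m!/k} \in H$ contradicts (3). The substantive implications then close the ring. For (4)$\,\Rightarrow\,$(2): if $u^k \in H$ for some $k \geq 1$, I let $p$ be the least such exponent; injectivity forces $v_0, v_1, \ldots, v_{p-1}$ to be distinct (any equality $v_i=v_j$ with $j - i < p$ would contradict minimality of $p$), so these $p$ vertices fit inside $\Verts\Ati$, giving $p \leq m$ and a witness $u^p \in H$ violating (4). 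For (5)$\,\Rightarrow\,$(2): if some $u^k \in H$ then $u^{mk}$ also labels a closed $\bp$-walk, so its prefix $u^m$ is readable from $\bp$, contradicting (5).

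The most delicate step is (2)$\,\Rightarrow\,$(5): if $u^m$ were readable from $\bp$, the vertices $v_0, v_1, \ldots, v_m$ would all be defined, and the pigeonhole principle applied to $m+1$ entries in a set of size $m$ would force $v_i = v_j$ for some $0 \leq i < j \leq m$; injectivity of $v \mapsto v \cdot u$ would then collapse this to $v_0 = v_{j-i}$, i.e.\ $u^{j-i} \in H$ with $1 \leq j-i \leq m$, contradicting (2). This is the only implication where both the injectivity of $v \mapsto v\cdot u$ and the cyclic-reduction of $u$ (which is what guarantees that each $u^k$ is reduced and hence that the reading of $u^k$ really is the $k$-fold iterate) are simultaneously indispensable; everything else is bookkeeping.
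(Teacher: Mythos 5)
Your proof is correct and follows essentially the same route as the paper: the decisive ingredient in both is the pigeonhole argument on the $m$ vertices combined with the injectivity of reading $u$ (coming from determinism of the involutive automaton), which collapses any coincidence $v_i=v_j$ back to the basepoint. You merely organize the implications slightly differently (proving (4)$\Rightarrow$(2) and (2)$\Rightarrow$(5) separately where the paper closes the cycle with a single (4)$\Rightarrow$(5)), but the underlying ideas coincide.
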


\begin{proof}
Note that
$\ref{item: nontrivial meet complement}
\Leftrightarrow
\ref{item: for all notin}$
by definition, whereas the implications
$
\ref{item: no m-walks}
 \Rightarrow
\ref{item: for all notin}
\Rightarrow
\ref{item: factorial notin}
\Rightarrow
\ref{item: for finitely many notin}
$ are clear
(and only the first one needs the assumption of $u$ being cyclically reduced).

Finally, to see that
$\ref{item: for finitely many notin}
\Rightarrow\ref{item: no m-walks}
$,
suppose (by contraposition) that
$u^m$ is readable in $\stallings{H}$ from the basepoint,
that is, there exists a walk% in $\Ati$
\begin{equation} \label{eq: repeated vertex walk}
    \bp = \verti_0  \xleadsto{u} \verti_1 \xleadsto{u} \cdots \xleadsto{u} \verti_m
\end{equation}
in $\stallings{H}$.
Since there are only $m$ vertices in $\stallings{H}$,
there must be a repetition among the vertices $\verti_0,\verti_1,\ldots,\verti_m$
in \eqref{eq: repeated vertex walk},
and, since $\stallings{H}$ is reduced,
the first repeated vertex --- say $\verti_k$, for some $k \in [1, m]$ --- must be the basepoint
(otherwise there would be two different walks reading $u$ arriving at the first repeated vertex). Hence there is a $\bp$-walk in $\stallings{H}$ reading $u^k$, that is, there exists some $k \in [1, m]$, such that $u^k \in H$, contrary to the condition in \ref{item: for finitely many notin}.
Therefore, $[\ref{item: for finitely many notin} \Rightarrow \ref{item: no m-walks}]$, and all the five statements are equivalent, as claimed.\end{proof}

The previous characterization provides a description of the set of cyclically reduced $\join$-cocycles as a regular subset of $\Fn$.

\begin{lem} \label{lem: cr cap-coc rational}
The set of cyclically reduced $\cap$-cocycles of a finitely generated subgroup $H$ of~$\Fn$ is a rational subset of $\Fn$,
and we can effectively compute a finite automaton recognizing it.
\end{lem}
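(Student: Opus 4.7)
The plan is to apply the characterization from Lemma~\ref{lem: char cr meet-cocycles} (in particular, the equivalence~\ref{item: nontrivial meet complement}$\Leftrightarrow$\ref{item: no m-walks}) and express the resulting set as a Boolean combination of the rational languages of walks between pairs of vertices in $\Ati = \stallings{H}$.

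Set $m = \card{\Verts \Ati}$. By Lemma~\ref{lem: char cr meet-cocycles}, a cyclically reduced word $u \neq 1$ is a $\cap$-cocycle of $H$ if and only if $u^m \notin \lang[\sbp \Verts\Ati]{\Ati}$, that is, $u^m$ cannot be read as the label of any walk starting at the basepoint of $\Ati$. Since $\Ati$ is finite, each language $\lang[\verti\vertii]{\Ati}$ (for $\verti, \vertii \in \Verts\Ati$) is rational and effectively computable by Kleene's Theorem. The key observation is that $u^m \in \lang[\sbp \Verts\Ati]{\Ati}$ precisely when there exist vertices $\verti_1, \ldots, \verti_m \in \Verts \Ati$ such that, setting $\verti_0 = \bp$, the word $u$ labels a walk from $\verti_{i-1}$ to $\verti_i$ in $\Ati$ for every $i \in \{1, \ldots, m\}$; equivalently,
\begin{equation*}
\set{u \in (A^{\pm})^{*} \st u^m \in \lang[\sbp \Verts\Ati]{\Ati}}
\,=\, \bigcup_{(\verti_1, \ldots, \verti_m) \in (\Verts\Ati)^m} \,\bigcap_{i=1}^{m} \lang[\verti_{i-1}\verti_i]{\Ati} \,.
\end{equation*}
This is a finite union of finite intersections of rational languages, hence itself rational and effectively computable. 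The set of cyclically reduced $\cap$-cocycles of $H$ is then $\Cred_n \setminus \{1\}$ minus the set above, and is therefore rational and effectively computable by the standard closure constructions (difference, complement, intersection).

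The only subtlety worth flagging is that the $m$-th power map $u \mapsto u^m$ is not a rational operation on languages, so one cannot directly pull back $\lang[\sbp \Verts\Ati]{\Ati}$ through it. The decomposition above circumvents this obstacle by case-splitting over the (finitely many) possible sequences of intermediate vertices visited while reading $u^m$, which converts the problem into a finite Boolean combination of pairwise reachability languages, each of which is rational by Kleene's Theorem.
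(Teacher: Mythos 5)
Your proof is correct and takes essentially the same route as the paper's: both invoke the equivalence \ref{item: nontrivial meet complement}$\Leftrightarrow$\ref{item: no m-walks} of Lemma~\ref{lem: char cr meet-cocycles} and express the set of excluded words as $\bigcup_{\boldsymbol{\verti}} \bigcap_{i=1}^{m} \lang[\verti_{i-1}\verti_{i}]{\Ati}$ over tuples of vertices beginning at the basepoint, then conclude by closure of rational languages under Boolean operations. The only (immaterial) difference is that the paper counts the trivial word as a $\cap$-cocycle and adjoins $\Trivial$ back after the set difference, whereas you discard it; this does not affect rationality or effectiveness.
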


\begin{proof}
Let
$\Ati = \stallings{H}$ and $m = \card{\Verts\Ati}$. Let
$X = \set{\bp} \times (\Verts\Ati)^{m}$ be the set of $(m+1)$-tuples of vertices in $\Ati$ starting at the basepoint;
then,
for every $\boldsymbol{\verti} = (\verti_0,\verti_1,\ldots,\verti_m) \in X$,
\begin{equation*}
K_{\boldsymbol{\verti}} \,=\,
\textstyle{
\bigcap\nolimits_{i=1}^{m}    \lang[\verti_{i-1}\verti_{i}]{\Ati}}
\end{equation*}
is the set of words which are readable between any of the successive vertices in~$\boldsymbol{\verti}$.
Note that $\bigcup_{\boldsymbol{\verti} \in X} K_{\boldsymbol{\verti}}$ is
exactly
the set of nontrivial words in $A^{\pm}$ whose $m$-th power is readable from the basepoint in $\Ati$.
Hence, according to Lemma~\ref{lem: char cr meet-cocycles}, the set of cyclically reduced $\cap$-cocycles of $H$ is precisely:
\begin{equation} \label{eq: cyclically reduced cocycles}
\Coc_{\cap}(H) \cap \Cred_{A}
\,=\,
    (\mathcal{C}_{A} \setmin
    \textstyle{
    \bigcup\nolimits_{\boldsymbol{\verti} \in X} K_{\boldsymbol{\verti}}})
    \cup \Trivial \,.
\end{equation}
Since the sets $\mathcal{C}_{A}$ and $K_{\boldsymbol{\verti}}$ are rational (for every $\boldsymbol{\verti}\in X$),
and $X$ is finite, the claimed result follows immediately from \eqref{eq: cyclically reduced cocycles} and the closure properties of rational languages.
\end{proof}

% \R{
% In a very recent preprint studying orders of elements relative to subgroups (see~\cite{}), the authors obtain a recursive description of the elements of order zero in a finitely generated subgroup $H \leqslant \Fn$,
% that is the full set of $\cap$-cocycles of $H$.

% \begin{prop}[\citenry{}]
% The set of $\cap$-cocycles of a finitely generated subgroup $H \leqslant \Fn$ is a rational subset of $\Fn$.
% \end{prop}
% }

\section{Direct complements} \label{sec: direct}

Finally, we address the study of $\oplus$-complements of finitely generated subgroups of~$\Fn$.
First, we address the question of their existence (\ie the existence of $\cap$-complements that are also $\join$-complements).
Below we prove that the second requirement
does not suppose any real restriction
(\ie a finitely generated subgroup admits a $\oplus$-complement if and only if it admits a $\cap$-complement,
see Proposition~\ref{prop: meet-complementable <=> inf index}).

\begin{thm}
\label{complement}
Let $H$ be a finitely generated subgroup of $\Fn$ where $H \neq \Trivial \neq \Fn$;
then $H \leqslant \Fn$ admits a
nontrivial
$\oplus$-complement if and only if
it has infinite index in $\Fn$.
Hence it is decidable whether a given finitely generated subgroup of $\Fn$ admits a nontrivial $\oplus$-complement.
\end{thm}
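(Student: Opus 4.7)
The forward direction is immediate from Proposition~\ref{prop: meet-complementable <=> inf index}: a nontrivial $\oplus$-complement of $H$ is \emph{a fortiori} a nontrivial $\cap$-complement, which forces $H$ to have infinite index in $\Fn$. For the converse, suppose $H$ has infinite index; then $\Stallings{H}$ is finite and incomplete by Remark~\ref{rem: fi iff finite and complete}. Fix an incomplete vertex $v$ in $\Stallings{H}$ with a missing outgoing $a$-arc (for some $a\in A^{\pm}$) and a reduced word $w$ labelling a walk from $\bp$ to $v$.

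The plan is to follow the template of Lemma~\ref{lem: incomplete => meet-compl}: let $K$ be the subgroup recognized by an involutive automaton of the form $\bp\xleadsto{wa}\Delta$, where $\Delta$ is an auxiliary reduced subautomaton on fresh vertices (disjoint from $\Stallings{H}$) attached at the end of the stem via the missing $a$-arc. Lemma~\ref{lem: incomplete => meet-compl} gives $K\cap H=\Trivial$ for any such $\Delta$, so the remaining task is to choose $\Delta$ ensuring $H\join K=\Fn$. Since $K=(wa)\gen{\Delta}(wa)^{-1}$, this amounts, after conjugating by $(wa)^{-1}$, to requiring $\gen{\Delta}$ to be a $\join$-complement of the conjugate subgroup $H':=(wa)^{-1}H(wa)$. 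Conjugation preserves both the (infinite) index and the $\join$-corank, so Theorem~\ref{thm: join-corank is computable} provides such a (computable) $\join$-complement of $H'$, which can be realised as $\gen{\Delta}$ for a suitable $\Delta$.

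The main obstacle is to guarantee that the combined automaton $\bp\xleadsto{wa}\Delta$ actually remains reduced, so that the conjugation identity $K=(wa)\gen{\Delta}(wa)^{-1}$ holds as stated and Lemma~\ref{lem: incomplete => meet-compl} applies verbatim. This is a bookkeeping issue at the attachment vertex of $\Delta$: if the chosen $\Delta$ has an incoming $a$-arc at its basepoint, one prepends a short fresh auxiliary stem (equivalently, conjugates $\Delta$ by a letter different from $a^{\pm}$) to eliminate such incoming arcs without altering the underlying subgroup nor the $\join$-complement property. Decidability then follows immediately from the equivalence just established, since Remark~\ref{rem: fi iff finite and complete} lets us effectively check whether $\Stallings{H}$ is complete (and hence whether $H$ has finite index) from any finite generating set of~$H$.
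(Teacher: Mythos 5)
The forward implication and the decidability claim are fine and agree with the paper. The converse is where the proposal breaks, and the problem is precisely the issue you defer as ``bookkeeping'': it is not bookkeeping but a genuine obstruction that makes the stem-plus-$\Delta$ template unworkable. Suppose you do manage to make $\bp \xleadsto{wa} \Delta$ reduced, which (given that $wa$ is reduced and $\Delta$ is reduced) amounts to the basepoint $\bp_{\Delta}$ of $\Delta$ having no outgoing $a^{-1}$-arc. Now compute $\stallings{H \join K}$: gluing $\stallings{K}$ to $\stallings{H}$ at the basepoint and folding absorbs the $w$-portion of the stem into the walk $\bp \xleadsto{w} v$ of $\stallings{H}$ and nothing more, leaving the automaton $\stallings{H} \cup \{ v \xarc{a\,} \bp_{\Delta} \} \cup \Delta$. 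Since $v$ is $a$-deficient in $\stallings{H}$ and $\bp_{\Delta}$ has no other outgoing $a^{-1}$-arc, this automaton is already deterministic and core, so it \emph{is} $\stallings{H \join K}$; it has more than one vertex, hence $H \join K \neq \Fn$. Equivalently: $\stallings{(wa)^{-1}H(wa)}$ has a degree-one basepoint whose unique outgoing arc is labelled $a^{-1}$, so \emph{every} $\join$-complement of $(wa)^{-1}H(wa)$ must contain a reduced element beginning with $a^{-1}$ (otherwise no folding can occur at the glued basepoint and the sum is already reduced). So the reducedness you need for the trivial-intersection argument is incompatible with $\gen{\Delta}$ being a $\join$-complement of the relevant conjugate; there is no admissible $\Delta$ to choose. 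Prepending a fresh arc labelled $c \neq a^{\pm 1}$ merely relocates the identical obstruction to the letter $c$, and moreover it \emph{does} change both $\gen{\Delta}$ (to a conjugate) and the conjugate of $H$ that it must complement, contrary to what you assert. (Note also that Lemma~\ref{lem: incomplete => meet-compl} cannot be applied ``for any such $\Delta$'': taking $\Delta = \bouquet{n}$ gives $K = \Fn$, which certainly does not meet $H$ trivially; the lemma implicitly requires the combined automaton to be reduced, which is exactly the condition in tension with the join requirement.)

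The paper's proof avoids this dead end by abandoning the stem. After conjugating so that the basepoint of $\stallings{H}$ is itself the degree-one vertex at the end of an $a$-arc attached to the $a$-deficient vertex, it takes $K = \gen{(A\setminus\{a\}) \cup \{a^{-1}uababa^{-1}ua\}}$, where $a^{-1}ua$ is a nontrivial element of $H$. The long generator deliberately has a prefix that \emph{is} readable in $\stallings{H}$ --- this is what forces the foldings that close the bouquet and give $H \join K = \Fn$ --- while a direct prefix analysis (rather than a separating stem) shows that no nontrivial reduced element of $K$ labels a closed walk at the basepoint of $\stallings{H}$, whence $H \cap K = \Trivial$. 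Any correct argument must thread this needle: the complement has to interact with $\stallings{H}$ enough to generate everything, yet no entire element of it may close up there. Keeping the two automata separated by a stem, as you do, makes the first requirement unattainable whenever the second one holds.
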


\begin{proof}
$[\Rightarrow]$ Since $\oplus$-complements are, in particular, $\cap$-complements, this is immediate from Proposition~\ref{prop: meet-complementable <=> inf index}.

$[\Leftarrow]$
% Since $\Fn$ and $\Trivial$ are
% obviously
% $\oplus$-complementary subgroups, it is enough to prove the claim for nontrivial finitely generated subgroups of infinite index.
From the hypotheses, we may assume that $\stallings{H}$ is nontrivial, finite, and incomplete (say $a$-deficient, for some generator $a$ of $\Fn$).
Note also that since, for any $w \in \Fn$, $K$ is a $\oplus$-complement of $H$ if and only if $wKw^{-1}$ is a $\oplus$-complement of $wHw^{-1}$, we may replace $H$ by any conjugate at our convenience.

In particular, $H$ has a conjugate which arises from adjoining the arc
$\verti  \xarc{a\,} \bp$ to an \mbox{$a$-deficient} vertex $\verti$ of an inverse graph $\Atii$ having no vertices of degree $1$. We replace $H$ by this conjugate.

Now, (since $H$ is nontrivial) there must exist a walk $\verti \xleadsto{} \verti$ in $\Atii$ reading some nonempty reduced word~$u$. Also, since $\Stallings{H}$ is incomplete, we have $n > 1$, and hence we may fix some letter $b \in A \setminus \{ a \}$. We claim that
\begin{equation*}
  K
  =
  \gen{(A\setminus \{ a \})
  \cup
  \set{ a^{-1}uababa^{-1}ua}}
\end{equation*}
is a $\oplus$-complement of $H$.

Note that $a^{-1}ua$ is reduced because it reads a reduced walk in (the reduced automaton) $\stallings{H}$, and, since $b \neq a$, then $a^{-1}uababa^{-1}ua$ is also reduced. It follows that
\begin{equation*}
    \Stallings{K}
    =
    \flower{(A\setminus \{ a \})
    \cup
    \set{ a^{-1}uababa^{-1}ua}}.
\end{equation*}
Suppose that $w \in \Fn$ represents a nontrivial element in $H \cap K$.
Since $w$ labels a~\mbox{$\bp$-walk} in $\Stallings{K}$, it has as prefix one of the words in $(A\setminus \{ a \}) \cup \{ a^{-1}uababa^{-1}ua \}$ or its inverse. However none of these words can be read off the basepoint of $\Stallings{H}$. This is obvious for $(A\setminus \{ a \})^{\pm}$. Suppose now that we have a walk
\begin{equation*}
   \bp \xleadsto{a^{-1}ua} \vertii \xleadsto{baba^{-1}ua} \cdots
\end{equation*}
in $\Stallings{H}$. Since $\Stallings{H}$ is reduced, we necessarily have that $\vertii = \bp$. But then we would be unable to read $b$ from the basepoint, since the only arc leaving it has label~$a^{-1}$. Similarly, we show that $w$ cannot have $a^{-1} u^{-1} a b^{-1} a^{-1} b^{-1} a^{-1} u^{-1} a
$ as a prefix. Therefore $H \cap K = \{ 1 \}$.

On the other hand, we can obtain $\Stallings{H \join K}$ by identifying the basepoints of $\Stallings{H}$ and $\Stallings{K}$, followed by a complete folding. Note that every $c \in A\setminus \{a\}$ labels a loop at the basepoint of $\Stallings{H \join K}$ because this already happens in $\Stallings{K}$. Hence it suffices to show that $a$ also labels a loop at the basepoint of $\Stallings{H \join K}$.

Since $a^{-1}uababa^{-1}ua$ lies in $K$ and is reduced, it is accepted by $\Stallings{K}$, and therefore by $\Stallings{H \join K}$. Thus we have a walk
\begin{equation*}
\bp \xleadsto{a^{-1}ua} \vertii_1 \xarc{b} \vertii_2 \xarc{a} \vertii_3 \xarc{b} \vertii_4 \xleadsto{a^{-1}ua} \bp
\end{equation*}
in $\Stallings{H \join K}$.
Since $ a^{-1}ua $ is accepted by $\Stallings{H}$ and $\Stallings{H \join K}$ is inverse, we obtain that $\vertii_1 = \vertii_4 = \bp$. Similarly, it follows from $b$ being accepted by $\Stallings{K}$ that
$\vertii_2 = \vertii_3 = \bp$. Thus $a$ labels a loop at the basepoint of $\Stallings{H \join K}$.

Hence $H \join K = \Fn$, and so $K$ is a $\oplus$-complement of $H$ as claimed.
\end{proof}

The following result is an immediate consequence of the above proof.

\begin{cor} \label{complebound}
Every $\oplus$-complementable finitely generated subgroup of $\Fn$ admits a $\oplus$-com\-ple\-ment of any rank greater than or equal to $n$.\qed
\end{cor}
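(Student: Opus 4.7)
My plan is to reuse the construction from the proof of Theorem~\ref{complement} and upgrade it from producing a single $\oplus$-complement of rank $n$ to producing ones of every rank $r \geq n$ (including $r = \infty$). Recall that that construction yields a $\oplus$-complement $K = \gen{(A\setminus\{a\}) \cup \{x\}}$ of $H$ of rank exactly $n$, with $x = a^{-1}uababa^{-1}ua$. The idea is to keep the loops $A\setminus\{a\}$ at the basepoint and replace the single petal $x$ by a family $x_1,\ldots,x_{r-n+1}$ of pairwise distinct petals all sharing the ``blocking'' pattern of $x$; a concrete choice is $x_i = a^{-1}u a\,b(ab)^i\,a^{-1}ua$, and the whole family $\set{x_i}_{i\geq 1}$ is used when $r=\infty$. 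Let $K_r = \gen{(A\setminus\{a\}) \cup \set{x_i \st 1 \leq i \leq r-n+1}}$.

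Three things need checking: (i) $\rk(K_r) = r$; (ii) $H \join K_r = \Fn$; (iii) $H \cap K_r = \Trivial$. Condition (ii) is immediate because $K \subseteq K_r$ and $K$ is already a $\join$-complement of $H$. For (iii) I would repeat verbatim the argument used in the proof of Theorem~\ref{complement}: any nontrivial $\bp$-walk in $\Stallings{K_r}$ begins either with a letter of $(A\setminus\{a\})^{\pm}$, which cannot be read off $\bp$ in $\Stallings{H}$ (the only outgoing letter there being $a^{-1}$), or with the prefix $(a^{-1}u a)^{\pm 1}$ of some $x_i^{\pm 1}$, which loops $\bp$ back to $\bp$ in $\Stallings{H}$ but cannot be continued, because the following letter is $b \in A\setminus\{a\}$.

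The only real obstacle is (i), because each $x_i$ starts with $a^{-1}$ and ends with $a$, so the naive flower $\flower{(A\setminus\{a\})\cup\{x_i\}_i}$ is not reduced and the Stallings reduction forces foldings at $\bp$. I would handle this either by bookkeeping the foldings via \eqref{eq: graph rank} — each closed folding at $\bp$ drops exactly one from the rank, so the final rank still comes out to $(n-1)+(r-n+1) = r$ — or by conjugating each $x_i$ so that its first and last letters avoid every outgoing label at $\bp$ in $\Stallings{K}$, making the corresponding flower automaton already reduced. Either way, combining (i)--(iii) yields a $\oplus$-complement of $H$ of rank exactly $r$, as required.
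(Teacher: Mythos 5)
The paper does not actually write out a proof of this corollary --- it is declared an immediate consequence of the proof of Theorem~\ref{complement} --- so your plan of making that argument explicit by replacing the single blocked petal $x=a^{-1}uababa^{-1}ua$ with a family of petals of the same shape is the right idea in spirit. The problem is that your concrete family collapses. Writing $x_i=a^{-1}ua\,b(ab)^i\,a^{-1}ua$, the inner factors $a^{-1}ua\cdot a^{-1}u^{-1}a$ cancel completely in the product $x_{i+1}x_i^{-1}$, and $b(ab)^{i+1}\cdot\big(b(ab)^i\big)^{-1}=b(ab)^{i+1}(b^{-1}a^{-1})^{i}b^{-1}=ba$, so that
\begin{equation*}
x_{i+1}x_i^{-1}\,=\,a^{-1}ua\cdot ba\cdot a^{-1}u^{-1}a\,=\,a^{-1}uabu^{-1}a
\qquad\text{for every } i\geq 1 .
\end{equation*}
Hence $x_i=(a^{-1}uabu^{-1}a)^{i-1}x_1$, every $x_i$ lies in the rank-two subgroup $\gen{x_1,\,a^{-1}uabu^{-1}a}$, and $\rk(K_r)\leq (n-1)+2=n+1$ for every $r$, including $r=\infty$. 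Your condition (i) therefore fails for every $r\geq n+2$, and the statement is not proved beyond rank $n+1$.

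Neither of your two suggested repairs addresses this. The folds forced at the basepoint by the non-cyclically-reduced petals are open folds (they identify arcs with distinct endpoints), hence rank-preserving and harmless; the rank is lost in closed folds occurring deep inside the automaton, where the $(ab)$-segments of distinct petals wrap onto one another --- which is exactly what the identity above expresses algebraically. (Your bookkeeping sentence is also internally inconsistent: if closed foldings did occur, the rank would come out strictly below $r$, not equal to it.) Conjugating each $x_i$ separately changes the subgroup $K_r$, so (ii) and (iii) would then have to be re-proved, and it does not remove the relations in any case. To make the strategy work you must choose middles $m_i$ (in place of $b(ab)^i$) which neither begin with $a^{-1}$ nor end with $a$ \emph{and} which freely generate a subgroup of the required rank --- the family $(ab)^i$ fails precisely because all its members are powers of the single element $ab$ --- and then verify both that no closed folds occur between distinct petals (so that the resulting generating set really is a basis) and that the prefix argument for $H\cap K_r=\Trivial$ still goes through.
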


Suggestively enough, a kind of dual of the previous situation applies within the subgroups admitting $\oplus$-complements; namely, the requirement of trivial intersection does not affect the rank bounds of the possible complements
(\ie they coincide with those obtained for $\join$-complements in Lemma~\ref{lem: join-corank bounds}).

\begin{lem} \label{lem: direct-corank bounds}
Let  $H$ be a $\oplus$-complementable subgroup of $\Free[n]$, then
\begin{equation} \label{eq: direct-corank bounds}
    \max\, \set{0,n - \rk (H)}
    \,\leq\,
    \dcrk(H)
    \,\leq\,
    n\,.
\end{equation}
Moreover, for $n > 1$
this is the only general restriction between the rank and the $\oplus$-corank of a subset of~$\Fn$. Namely,
for every pair $(r,c) \in [1,\infty] \times [1,n]$ satisfying $n - r \leq c \leq n$, there exists a subgroup of $\Fn$ with rank $r$ and $\oplus$-corank $c$.
\end{lem}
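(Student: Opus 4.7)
The proof naturally splits into two parts: first, establishing the two bounds in \eqref{eq: direct-corank bounds}; second, realizing every admissible pair $(r,c)$ as the rank / direct-corank of some subgroup.

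For the bounds, I would simply invoke the work already done. The upper bound $\dcrk(H) \leq n$ is immediate from Corollary~\ref{complebound}, which furnishes a $\oplus$-complement of rank exactly $n$ for any $\oplus$-complementable $H$. For the lower bound, every $\oplus$-complement of $H$ is in particular a $\join$-complement (see Remark~\ref{rem: free => oplus => join}), so $\dcrk(H) \geq \jcrk(H)$; combining this with the bound $\jcrk(H) \geq \max\{0, n - \rk(H)\}$ from Lemma~\ref{lem: join-corank bounds} finishes the bounds side.

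For the realization part, the plan is to reuse the family of examples $H(r,c)$ constructed in the proof of Lemma~\ref{lem: join-corank bounds}. Each $H(r,c)$ has rank $r$, and a quick inspection of its Stallings automaton (a loop-bouquet with $n-c$ external loops together with a finite alternating chain of $a_1^2$- and $a_2^2$-cycles hanging off the basepoint) shows that it is incomplete whenever $c < n$, and of infinite index otherwise by the abelianization argument already used there; hence, by Theorem~\ref{complement}, $H(r,c)$ is $\oplus$-complementable. The first part of the lemma then gives $\dcrk(H(r,c)) \geq \jcrk(H(r,c)) = c$, so only the matching upper bound $\dcrk(H(r,c)) \leq c$ remains.

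To obtain this upper bound, the candidate is the subgroup $K(r,c)$ already exhibited in Lemma~\ref{lem: join-corank bounds}, which has rank $c$ and satisfies $H(r,c) \join K(r,c) = \Fn$; what is missing is triviality of $H(r,c) \cap K(r,c)$. I would verify this using the Stallings pullback: form the fibered product of $\Stallings{H(r,c)}$ and $\Stallings{K(r,c)}$ at the pair of basepoints, and show that its basepoint connected component collapses to the trivial automaton. The main obstacle is precisely this combinatorial check, because the interlocking $a_1^2$- and $a_2^2$-cycles in $\Stallings{H(r,c)}$ interact delicately with the generator $a_2(a_2a_1)^r$ of $K(r,c)$; the argument should proceed by tracing every closed walk at the basepoint of the pullback and showing that its reduced label must be trivial, perhaps by induction on the number of times it crosses the distinguished arc. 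Edge cases (notably $c = 1$, where the displayed generating set of $K(r,c)$ has size greater than $c$ and the construction must be slightly adjusted, \eg by dropping $a_1 a_2$ or replacing it) should be handled separately with a small ad-hoc construction, checked directly by drawing the relevant Stallings automata.
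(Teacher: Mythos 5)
Your overall architecture matches the paper's proof exactly: the bounds follow from $\jcrk(H)\leq\dcrk(H)$ together with Lemma~\ref{lem: join-corank bounds} and Corollary~\ref{complebound}, and the realization of each pair $(r,c)$ is obtained by recycling the pairs $H(r,c)$, $K(r,c)$ from the proof of Lemma~\ref{lem: join-corank bounds}. However, there is a genuine gap: the only new mathematical content of this lemma is precisely the verification that $H(r,c)\cap K(r,c)=\Trivial$, and you do not carry it out. You correctly identify it as ``the main obstacle,'' propose computing the fibered product of the two Stallings automata, and then say the argument ``should proceed by tracing every closed walk\ldots perhaps by induction on the number of times it crosses the distinguished arc.'' That is a plan, not a proof; as written, the lemma is not established. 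For comparison, the paper closes this step with a short direct argument: take $u\in H(r,c)\cap K(r,c)$ written as a reduced word $u_1\cdots u_m$ in the free basis of $K(r,c)$; the generators $a_3,\ldots,a_c$ cannot occur since those letters label no edge of $\Stallings{H(r,c)}$; if some $u_i$ is $a_2(a_2a_1)^r$ or its inverse, the absence of cancellation between consecutive $u_j$ forces $(a_2a_1)^{r}$ or its inverse to survive as a factor of $u$, and no such word labels a path in $\Stallings{H(r,c)}$; hence $u=(a_1a_2)^m$, and the only such word accepted by $\Stallings{H(r,c)}$ is the trivial one. You should either execute your pullback computation in full or substitute a word-combinatorial argument of this kind.

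Two smaller remarks. First, your detour through incompleteness and Theorem~\ref{complement} to establish that $H(r,c)$ is $\oplus$-complementable is unnecessary: exhibiting $K(r,c)$ with $H(r,c)\oplus K(r,c)=\Fn$ already does that. Second, your observation about the case $c=1$ (where the displayed generating set $\set{a_3,\ldots,a_c,\,a_1a_2,\,a_2(a_2a_1)^r}$ has two ``non-letter'' generators and so has cardinality exceeding $c$) is a fair criticism of the inherited construction that the paper does not address; but deferring it to an unspecified ``ad-hoc construction, checked directly by drawing the relevant Stallings automata'' leaves that case unproved as well. If you flag an edge case, you need to resolve it.
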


\begin{proof}
Since $\jcrk{H} \leq \dcrk{H}$, the bounds in \eqref{eq: direct-corank bounds} are immediate from those in \eqref{eq: join-corank bounds} and Corollary \ref{complebound}.
For the second claim it is enough to consider again
the families of subgroups $H(r,c)$ and $K(r,c)$ introduced in the proof of Lemma \ref{lem: join-corank bounds} and check that $H(r,c) \cap K(r,c) = \{ 1 \}$.

Indeed, suppose that $u \in H(r,c) \cap K(r,c)$. If we write $u$ as a reduced word on the generators of $K(r,c)$, say $u = u_1\ldots u_m$, then the generators $a_3,\ldots,a_c$ must be absent because they don't label edges in $\Stallings{H(r,c)}$. Suppose that $u_i \in \{ a_2(a_2a_1)^r, (a_1^{-1}a_2^{-1})^ra_2^{-1} \}$ for some $i$. Since there is no cancellation between consecutive $u_j$, it follows that either $(a_2a_1)^r$ or $(a_1^{-1}a_2^{-1})^r$ is a factor of $u$. But neither of these words labels a path in $\Stallings{H(r,c)}$; hence $u = (a_1a_2)^m$ for some $m \in \mathbb{Z}$. The only such word accepted by $\Stallings{H(r,c)}$ is $u = 1$; thus $H(r,c) \cap K(r,c) = \{ 1 \}$ as required.
\end{proof}

So, whenever the direct corank exists, the restriction of having trivial intersection does not affect the range of values it can take. In Question~\ref{qu: direct-corank = join-corank} we ask whether the same thing happens for the coranks themselves.

\subsection{Direct cocycles}

Throughout this section, $H$ will denote a finitely generated subgroup of $\Fn$. Recall that
$C_n$ denotes the set of cyclically  reduced words in $\Fn$,
and the set of direct cocycles of $H$ is
$ %\begin{equation}
\dCoc{H}
\,=\,
\Set{ u \in \Fn \st H \oplus \gen{u} = \Fn } .
$ %\end{equation}
Since
$\Coc_{\oplus}(H)
=
\Coc_{\join}(H) \cap \Coc_{\cap}(H)
$,
the next result follows immediately from Lemmas \ref{lem: join-coc rational} and \ref{lem: cr cap-coc rational}.

\begin{cor}
\label{crrat}
The set of cyclically reduced $\oplus$-cocycles of a finitely generated subgroup $H$ of $\Fn$
(namely, $\dCoc{H} \cap \Cred_n$)
is a rational subset of $\Fn$ and we can effectively compute a finite automaton recognizing it. \qed
\end{cor}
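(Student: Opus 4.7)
The plan is to observe that this is an essentially immediate corollary of the two preceding lemmas, once one recognises the set-theoretic decomposition
\[
    \dCoc{H}
    \,=\,
    \Coc_{\join}(H) \,\cap\, \Coc_{\cap}(H),
\]
which holds by the very definition of $\oplus$-complement (a direct complement is precisely a join complement that intersects $H$ trivially). Intersecting both sides with $\Cred_n$ yields
\[
    \dCoc{H} \cap \Cred_n
    \,=\,
    \Coc_{\join}(H) \,\cap\, \bigl(\Coc_{\cap}(H) \cap \Cred_n\bigr).
\]

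From here I would invoke Lemma~\ref{lem: join-coc rational} to obtain an effectively constructible finite automaton recognising $\Coc_{\join}(H)$ as a rational subset of $\Fn$ (note that the automaton produced in that proof reads only reduced words, living inside $\Red_n$), and Lemma~\ref{lem: cr cap-coc rational} to obtain one recognising $\Coc_{\cap}(H) \cap \Cred_n$. Both are rational $A^{\pm}$-languages in the classical sense, so by Kleene's theorem (as recalled in the Preliminaries) the class of rational languages over $A^{\pm}$ is closed under finite intersection, and an automaton for the intersection is effectively computed via the standard product construction.

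The resulting automaton recognises a rational subset of $(A^{\pm})^*$ all of whose words are reduced (in fact cyclically reduced, since we have intersected with $\Cred_n$), and therefore corresponds to a rational subset of $\Fn$ equal to $\dCoc{H} \cap \Cred_n$, as required. I do not anticipate any obstacle: the two preceding lemmas supply the entire content, and the only work is to observe the intersection decomposition above and to apply the elementary closure of rational languages under intersection.
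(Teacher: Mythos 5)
Your proposal is correct and follows exactly the paper's route: the paper likewise observes that $\Coc_{\oplus}(H) = \Coc_{\join}(H) \cap \Coc_{\cap}(H)$ and deduces the corollary immediately from Lemmas~\ref{lem: join-coc rational} and~\ref{lem: cr cap-coc rational} together with closure of rational languages under intersection. Your additional remarks about intersecting with $\Cred_n$ and the product construction are just a more explicit spelling-out of the same argument.
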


However, the following example shows that rationality is no longer ensured for the full set of  $\oplus$-cocycles.

\begin{exa}
\label{exa: direct-Coc not rational}
Let $H = \gen{ a,b^2 } \leqslant
\Free[\{a,b\}] =
\Free[2]
%= \pres{a,b}{-}
$; then $\Coc_{\oplus}(H)$ is not a rational subset of $\Free[2]$.
\end{exa}

\begin{proof}
By Benois Theorem \cite{BenoisPartiesRationnellesGroupe1969}, a subset $S$ of $\Free[\set{a,b}]$ is rational
if and only if
it constitutes a rational language of $\set{a,b}^{*}$. We show that
\begin{equation} \label{nonra1}
a^*b(a^{-1})^* \setminus \Coc_{\oplus}(H)
\,=\,
\{ a^nba^{-n} \st n \geq 0 \} \,.
\end{equation}
Let $n \geq 0$. Clearly, $(a^nba^{-n})^2 = a^nb^2a^{-n} \in H \cap \gen{a^nba^{-n}}$. It follows that $a^nba^{-n} \in a^*b(a^{-1})^* \setminus \Coc_{\oplus}(H)$.

Conversely, assume that $m, n \geq 0$ are distinct and that $a^mba^{-n} \notin \Coc_{\oplus}(H)$.
From the Stallings automaton of $H$:
\vspace{-10pt}
\begin{figure}[H]
  \centering
  \begin{tikzpicture}[shorten >=1pt, node distance=1.5 and 1.5, on grid,auto,>=stealth']
   \node[state,accepting] (0) {};
   \node[state] (1) [right = of 0]{};

   \path[->]
        (0) edge[blue,bend left]
            node[above] {\scriptsize{$b$}}
            (1)
        edge[loop left,red,min distance=10mm,in=145,out=215]
            node[left = 0.1] {\scriptsize{$a$}}
            (0);

    \path[->]
        (1) edge[blue,bend left]
            %node[above] {\scriptsize{$b$}}
            (0);
\end{tikzpicture}
\vspace{-15pt}
\end{figure}
\noindent
it is clear that $H \join \gen{ a^mba^{-n} } = \Free[2]$ (since both extremes of the words $a^mba^{-n}$ would be collapsed by the loop reading $a$,
leaving a loop reading $b$ also attached to basepoint).
Hence $(a^mba^{-n})^k \in H$ for some $k \geq 1$.
But $\red{(a^mba^{-n})^k} = a^m(ba^{m-n})^{k-1}ba^{-n}$.
Since $m \neq n$, the reduced word $a^m(ba^{m-n})^{k-1}ba^{-n}$ is not accepted by $\stallings{H}$, a contradiction. Therefore $a^*b(a^{-1})^* \setminus \Coc_{\oplus}(H) \subseteq \{ a^nba^{-n} \st n \geq 0 \}$ and (\ref{nonra1}) holds.

% Suppose now that $\Coc_{\oplus}(H)$ is a rational subset of $\Free[2]$. Then the reduced forms of $\Coc_{\oplus}(H)$ constitute a rational language. Since $a^*b(a^{-1})^*$ is rational and rational languages are closed under boolean operations, it follows from (\ref{nonra1}) that $L = \{ a^nba^{-n} \st n \geq 0 \}$ is a rational language. This contradicts the Pumping Lemma
% (see~\cite[Lemma 1.12]{SakarovitchElementsAutomataTheory2009}),
% but we can provide a direct argument: if $L$ is recognized by an automaton $\Ati$ with $m$ states, any successful path with label $a^{m}ba^{-m}$ must go through a closed path labeled by $a^j$ for some $j \in \{ 1,\ldots,m\}$ before reading~$b$. Hence $a^{m+j}ba^{-m} \in \lang{\Ati} = L$, a contradiction. Therefore $\Coc_{\oplus}(H)$ is not a rational subset of $\Free[2]$.
Now $\{ a^nba^{-n} \st n \geq 0 \}$ is a classical example of a non-rational language (see \eg \cite[Exercise 4.1.1.c]{HopcroftIntroductionAutomataTheory2006}). Since rational languages are closed under boolean operations, it follows from (\ref{nonra1}) that $\Coc_{\oplus}(H)$ is not a rational subset of $\Free[2]$.
\end{proof}

We recall now the definition of a context-free language. This can be done through the concept of a context-free grammar, a particular type of rewriting system. Let $A$ be a finite alphabet. A {\em context-free $A$-grammar} is a triple ${\mathcal{G}} = (V,P,S)$ such that $V$ is a finite set disjoint from $A$, $S \in V$ and $P$ is a finite subset of $V \times (V \cup A)^{*}$. The language generated by~$\mathcal{G}$ is
\begin{equation*}
\lang{\mathcal{G}}
\,=\,
\set{\, w \in A^{\! *} \st S \overset{*\, }{\Rightarrow} w \,} \,,
\end{equation*}
that is the set of all words on $A$ obtained from $S$ by successively replacing a letter on the left-hand side of some element of $P$ by its right-hand side. A language $L \subseteq A^{\!*}$ is {\em context-free} if $L = \lang{\mathcal{G}}$ for some context-free $A$-grammar ${\mathcal{G}}$.
A subset $X$ of $\Fn$ is said to be context-free if the set of reduced forms of $X$ constitutes a context-free $A^{\pm}$-language.

\begin{prop}
\label{cfree}
Let $H$ be a finitely generated subgroup of $\Fn$; then $\Coc_{\oplus}(H)$ is a context-free subset of $\Fn$, and we can effectively compute a context-free grammar generating it.
\end{prop}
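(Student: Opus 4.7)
The plan is to describe $\Coc_{\oplus}(H)$ through the canonical conjugation decomposition of reduced words, thereby reducing the problem to the cyclically reduced case (handled by Corollary~\ref{crrat}) wrapped in a palindrome-like copy structure. Every non-trivial reduced $u \in \Fn$ admits a unique factorization $u = wvw^{-1}$ with $v$ cyclically reduced and non-trivial, and by the conjugation invariance recorded in Remark~\ref{rem: conjug}, one has $u \in \Coc_{\oplus}(H)$ if and only if $v \in \Coc_{\oplus}(w^{-1}Hw)$. The task is then to capture the relevant conjugates $w^{-1}Hw$ using only the finitely many vertices of $\stallings{H}$.

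The central geometric step is the following claim: if $u = wvw^{-1} \in \Coc_{\oplus}(H)$ with $v$ non-trivial and cyclically reduced, then $w$ labels a walk in $\stallings{H}$ from the basepoint~$\bp$ to some vertex~$\verti$. To prove it I would analyse $\stallings{H \join \gen{u}} = \red{\stallings{H} + \flower{u}}$. Folding $\flower{u}$ internally first produces a ``tadpole'': a simple path labelled $w$ from $\bp$ ending at a vertex $\verti^*$, with a cycle labelled $v$ attached at $\verti^*$; no further folding is available within the tadpole because $v$ is cyclically reduced. When the tadpole is glued at $\bp$, if $w$ fails to trace a walk in $\stallings{H}$ then the exterior portion of the handle together with the $v$-cycle contributes new vertices outside $\stallings{H}$; a case analysis of the arcs incident to each such vertex (the reducedness of $wvw^{-1}$ and the cyclic reducedness of $v$ rule out any folding at the handle/cycle junction, and the reducedness of $w$ and $v$ rules out foldings in their interiors) shows that no subsequent folding can collapse these vertices. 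Hence $\stallings{\gen{H \cup \{u\}}}$ retains the exterior of the tadpole, contradicting $u \in \Coc_{\join}(H)$.

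Granting this lemma, for each $\verti \in \Verts\stallings{H}$ let $W_\verti = \rlang[\bp\verti]{\stallings{H}}$ denote the rational language of reduced labels of $(\bp,\verti)$-walks in $\stallings{H}$, and let $L_\verti = \Coc_{\oplus}(H_\verti) \cap \Cred_n$, where $H_\verti$ is the (effectively computable) conjugate recognised by $\stallings{H}$ after rebasing at $\verti$ and taking the core; by Corollary~\ref{crrat} each $L_\verti$ is rational and effectively given. Combining the geometric lemma with the uniqueness of the canonical decomposition, one obtains
\begin{equation*}
\Coc_{\oplus}(H) \setminus \{1\}
\,=\,
\bigcup_{\verti \in \Verts \stallings{H}}
\set{\, wvw^{-1} \st w \in W_\verti,\ v \in L_\verti,\ wvw^{-1} \in \Red_n \,} \,.
\end{equation*}

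Each set on the right is a copy-style language with a rational middle flanked by rational halves, and is therefore context-free by a standard pushdown construction: the PDA pushes each letter of $w$ onto the stack while simulating the DFA for $W_\verti$, then switches to simulating the DFA for $L_\verti$ to read $v$, and finally pops the stack letter-by-letter while matching $w^{-1}$ on the input, using finite-state book-keeping to enforce reducedness at the $w|v$ and $v|w^{-1}$ boundaries. A context-free grammar is extracted from the PDA via the standard triple construction, and since context-free languages are closed under finite union, the full $\Coc_{\oplus}(H)$ (with $\{1\}$ appended in the trivial case $H = \Fn$) is context-free, every step being effective from $\stallings{H}$. I expect the main obstacle to be the geometric lemma above: the case analysis of foldings around the $v$-cycle and the exterior handle has to be carried out carefully to rule out Case B.
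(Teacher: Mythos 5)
Your proposal is correct and follows essentially the same route as the paper: the same decomposition of $\Coc_{\oplus}(H)$ as a union over vertices $\verti$ of conjugates $wvw^{-1}$ with $w$ readable from the basepoint to $\verti$ and $v$ a cyclically reduced $\oplus$-cocycle of the rebased subgroup, intersected with $\Red_n$, relying on Corollary~\ref{crrat} for the rational core. The only (inessential) differences are that the paper obtains your geometric lemma by citing the factorization $u = u_1u_2$ already established in the proof of Lemma~\ref{lem: join-coc rational} (so $v$ is a prefix of $u_1$ or $u_2^{-1}$) rather than by a fresh tadpole-folding analysis, and it establishes context-freeness via a grammar for $\{v\$v^{-1}\}$ plus a rational substitution instead of your equivalent pushdown construction.
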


\begin{proof}
Let $\Ati = \stallings{H}$. It is clear that the claim holds whenever $\Ati$ is a bouquet; so, we may assume that $\Ati$ has at least two vertices.
Then, for every $\vertiii \in \Verts \Ati$, we denote by $H_\vertiii$ the conjugate $u^{-1}Hu$, where $u \in \lang[\bp \vertiii]{\Ati}$. It is easy to see that $\stallings{H_\vertiii}$ can be obtained from $\stallings{H}$ by making $\vertiii$ the new basepoint and successively erasing vertices  of degree $1$ distinct from~$\vertiii$. Let $K_\vertiii = \dCoc{H_\vertiii} \cap C_n$.
We show that:
\begin{equation} \label{eq: cfree1}
\dCoc{H}
\,=\,
\left(
\bigcup\nolimits_{\vertiii \in \Verts \Ati}
\Set{ vwv^{-1} \st v \in \lang[\bp \vertiii]{\Ati}, w \in K_\vertiii }
\right)
\,\cap\,
\Red_n.
\end{equation}
Let $vwv^{-1}$ belong to the right hand side as prescribed. Then $w \in \Coc(v^{-1}Hv) \cap C_n$ and so $v^{-1}Hv \oplus \gen{ w } = \Fn$, yielding $H \oplus \gen{ vwv^{-1} } = \Fn$. Thus $vwv^{-1} \in \dCoc{H}$.

Conversely, let $u \in \dCoc{H}$. As we saw in the proof of Lemma~\ref{lem: join-coc rational},
there exists
some $(\verti,\vertii) \in P$ and paths
$\vertii \xleadsto{\smash{u_2}} \bp \xleadsto{\smash{u_1}} \verti$
in $\stallings{H}$ such that $u = u_1u_2$. Write $u = vwv^{-1}$ with $w \in C_n$. Since $vwv^{-1} = u_1u_2$,
$v$ must be a prefix of $u_1$ or $u_2^{-1}$ (or both).
So we get $v \in \lang[\bp \vertiii]{\Ati}$ for some $\vertiii \in \Verts \Ati$. It remains to prove that $w \in K_\vertiii = \Coc(v^{-1}Hv) \cap C_n$; but $H \oplus \gen{ vwv^{-1} } = \Fn$ yields $v^{-1}Hv \oplus \gen{ w } = \Fn$ and so \eqref{eq: cfree1} holds.

Since the class of context-free languages is closed under finite union and intersection with rational languages (see, for example \cite[Section 10.5]{DavisComputabilityComplexityLanguages1994}), it suffices to show that the language $\{ vwv^{-1} \st v \in \lang[\bp \vertiii]{\Ati}, w \in K_\vertiii \}$ is context-free for every $\vertiii \in \Verts \Ati$.

Let $\mathcal{G} = (V,P,S)$ be the context-free
$(A^{\pm} \cup \set{\$})$-grammar defined by $V = \{ S\}$ and $P = \{ S \} \times (\{ \$\} \cup \{ aSa^{-1} \st a \in A^{\pm} \})$; then $\lang{\mathcal{G}} = \{ v\$ v^{-1} \st v \in (A^{\pm})^* \}$ is context-free. Since
$\lang[\bp \vertiii]{\Ati} \$ \lang[\bp \vertiii]{\Ati}^{-1}$ is rational, it follows that
$\lang{\mathcal{G}} \cap \lang[\bp \vertiii]{\Ati} \$ \lang[\bp \vertiii]{\Ati}^{-1}$ is context-free as well.

Define a homomorphism $\varphi$ from $(A^{\pm} \cup \{ \$\})^*$ into the monoid of rational $A^{\pm}$-languages (under product) defined by $a\varphi = a$ $(a \in A^{\pm})$ and $\$ \varphi = K_\vertiii$. This is an example of a {\em rational substitution}. Since
\begin{equation*}
\Set{ vwv^{-1} \st v \in \lang[\bp \vertiii]{\Ati}, w \in K_\vertiii }
\,=\,
(\lang{\mathcal{G}} \cap \lang[\bp \vertiii]{\Ati}\$ \lang[\bp \vertiii]{\Ati}^{-1})\varphi
\end{equation*}
and the class of context-free languages is closed under rational (in fact, context-free) substitution (see \cite[Theorem 4.1.1]{ShallitSecondCourseFormal2008}), we obtain the desired result.
\end{proof}

\subsection{The cocyclic case and a question}

A subgroup is said to be ($\join$ or $\oplus$)-\defin{cocyclic} if it admits a cyclic complement of the corresponding type.
Obviously, a $\oplus$-cocyclic subgroup is necessarily $\join$-cocyclic, but the converse fails for finite index proper subgroups in view of Theorem \ref{complement}.

\begin{thm}
Let $H$ be a finitely generated subgroup of infinite index of $\Fn$; then $H$ is $\oplus$-cocyclic if and only if it is $\join$-cocyclic.
\end{thm}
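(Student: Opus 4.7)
The forward direction is immediate: a cyclic $\oplus$-complement is in particular a cyclic $\join$-complement, so $\oplus$-cocyclicity implies $\join$-cocyclicity. For the converse, assume $H$ is $\join$-cocyclic with $\ind{\Fn}{H}=\infty$, and pick $u\in\Coc_{\join}(H)$. Since both cocyclicity notions are preserved by conjugation (Remark~\ref{rem: conjug}), after replacing $H$ by a suitable conjugate we may assume $u$ is cyclically reduced. The plan is then to produce a $\oplus$-cocycle of $H$, either equal to $u$ itself (in the best case) or obtained by multiplying $u$ on the right by a carefully chosen element of $H$.

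By Lemma~\ref{lem: char cr meet-cocycles} applied to $u$ in $\Ati=\stallings{H}$, either $u$ is already a $\cap$-cocycle (hence a $\oplus$-cocycle, and we are done), or else the deterministic iteration $\bp=v_0\xleadsto{u}v_1\xleadsto{u}\cdots$ in $\Ati$ is defined through $m=\card{\Verts\Ati}$ steps and returns to $\bp$ at some step $k\leq m$, giving $u^k\in H$. In this latter case I would modify $u$ to $v=u\cdot h$ for a suitable $h\in H$. Since $h\in H$, we recover $u=vh^{-1}\in\gen{v}\join H$, which forces $\Fn=H\join\gen{u}\subseteq H\join\gen{v}$, so $v$ is automatically a $\join$-cocycle.

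To also make $v$ a $\cap$-cocycle, the choice of $h$ must ensure that the reading of $v$ in $\Ati$ from the basepoint aborts inside the first copy of $v$. After reading $u$ we sit at $v_1$, and it suffices that the first letter $c$ of $h$ label no outgoing arc at $v_1$ (being careful not to let $c$ cancel with the tail of $u$). Since $\ind{\Fn}{H}=\infty$, Remark~\ref{rem: fi iff finite and complete} guarantees that $\Ati$ is incomplete, so a deficient vertex--letter pair exists in $\Ati$; in the favourable situation where $v_1$ is itself deficient for a letter $c$ already labelling some outgoing arc at $\bp$, the connectedness of $\Ati$ lets one complete the $c$-arc at $\bp$ to a closed $\bp$-walk, i.e.\ to an element of $H$ starting with $c$. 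With this $v=uh$, the reading of $v$ from $\bp$ aborts after the first $u$-block, so no positive power of $v$ labels a $\bp$-walk in $\Ati$, and Lemma~\ref{lem: char cr meet-cocycles} applied to $\cred{v}$ yields $\gen{v}\cap H=\Trivial$; that is, $v$ is a $\oplus$-cocycle.

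The main obstacle is handling the case where no such deficiency lies at $v_1$, i.e.\ when every letter labelling an outgoing arc at $\bp$ is also readable from $v_1$. One must then either walk further along the $u$-orbit $v_1,\ldots,v_{k-1}$ and locate some $v_j$ where a suitable deficiency appears (replacing $u$ by $u^j h$), or else switch to a different $\join$-cocycle coming from another identification of vertices in the rational description of $\Coc_{\join}(H)$ from Lemma~\ref{lem: join-coc rational}, whose $\Ati$-orbit meets a deficient region of $\Ati$. Ruling out the pathological configurations and showing that one of these strategies always succeeds under the combined hypotheses ($\join$-cocyclicity plus infinite index) is where the bulk of the work lies.
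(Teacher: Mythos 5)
Your forward direction and the reduction to a cyclically reduced $\join$-cocycle $u$ with $u^k\in H$ are fine, and your core idea --- replace $u$ by $uh$ with some $h\in H$, so that the join is preserved automatically while the intersection is destroyed --- is exactly the mechanism of the paper's proof. But there is a genuine gap, and you have located it yourself: everything hinges on showing that a suitable modification always exists, and you only treat the ``favourable situation'' where the vertex $v_1$ (the endpoint of the $u$-walk from $\bp$) is deficient at a letter that also begins some element of $H$. That situation need not occur, and single-letter deficiency is the wrong invariant: $\Ati=\stallings{H}$ can be incomplete while every letter readable from $\bp$ is also readable from $v_1$. The paper resolves this at the level of whole languages rather than letters. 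It splits into the case where $\lang[q_{i-1}\Verts]{\Ati}\neq\lang[q_{i}\Verts]{\Ati}$ for some $i$ along the $u$-orbit (Case~1), in which one extracts a reduced word $v$ readable from $\bp$ but not from $q_1$, completes it to a closed $\bp$-walk $vw\in H$, and --- after padding with cyclically reduced elements $z_1^{m!}$, $z_2$ of $H$ to force cyclic reducedness of the product without re-entering $\lang[q_1\Verts]{\Ati}$ --- shows that the resulting $vwu$ is a $\oplus$-cocycle; and the case where all these languages coincide (Case~2), which is shown, via an argument with $\varepsilon$-arcs on $\Ati_{\!/\bp= q_1}$ and induction, to force $\Ati$ to be complete, contradicting $\ind{\Fn}{H}=\infty$. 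That Case~2 argument is precisely the ``ruling out of pathological configurations'' you defer to future work, and it is the only place the infinite-index hypothesis enters; without it the statement is false (finite-index proper subgroups are $\join$-cocyclic but never $\oplus$-cocyclic), so this step cannot be optional.

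A second, more local problem: even in your favourable case, Lemma~\ref{lem: char cr meet-cocycles} tests powers of a \emph{cyclically} reduced word against $\stallings{H}$. Your $v=uh$ is reduced if the first letter of $h$ does not cancel the tail of $u$, but nothing prevents the last letter of $h$ from cancelling the first letter of $u$, so $v$ need not be cyclically reduced; and $\gen{v}\cap H=\Trivial$ is \emph{not} equivalent to $\gen{\cred{v}}\cap H=\Trivial$, since cyclic reduction is a conjugation and moves $H$ as well. The paper's $z_1^{m!}$ and $z_2$ padding exists precisely to manufacture a cyclically reduced cocycle before the intersection test is applied; your sketch needs an analogous device.
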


\begin{proof}
We prove the nontrivial converse implication.
Let $\Ati = \stallings{H}$ and let $\Verts = \Verts\Ati$.
Assume that $H \join \gen{u} = \Fn$. In view of Remark~\ref{rem: conjug}, we may assume
that every vertex of $\Ati$ has
degree strictly greater than
$1$. We may also assume that $\Ati \neq \bouquet{n-1}$, a trivial case.

Suppose that $u$ is not cyclically reduced. Write $u = vwv^{-1}$ as a reduced product with $w$ cyclically reduced.
If $v \notin \lang[\bp \Verts]{\Ati}$
then $\Ati$ remains a subautomaton of $\overline{\Ati +u} = \bouquet{n}$ after folding, a contradiction.
Thus there exists a walk
\smash{$\bp \xleadsto{v} \vertiii$} in $\Ati$. By replacing $H$ by its conjugate $H^{v}$
it follows that $H^{v} \join \gen{w} = \Fn$. Therefore, in view of Remark~\ref{rem: conjug}, we may assume that $u$ is cyclically reduced.

We may assume that $H \cap \gen{u} \neq \Trivial$. Hence $u^k \in H$ for some $k \geq 1$
which we can assume to be minimum,
and therefore there exists a walk in $\Ati$ of the form
\begin{equation*}
\bp = \vertii_0 \xleadsto{u} \vertii_1 \xleadsto{u} \ldots \xleadsto{u} \vertii_k = \bp
\end{equation*}
\textbf{Case 1:} Suppose that
$\lang[\vertii_{i-1} \Verts]{\Ati} \neq \lang[\vertii_i \Verts] {\Ati}$ for some $i$.
Replacing $u$ by $u^{-1}$ if necessary, we may assume that
$\lang[\vertii_{i-1}\Verts]{\Ati} \not\subseteq \lang[\vertii_i \Verts]{\Ati}$.
Replacing the basepoint by $\vertii_{i-1}$ through conjugacy if needed, we may assume that
$\lang[\vertii_{0} \Verts]{\Ati} \not\subseteq \lang[\vertii_1 \Verts] {\Ati}$.
We claim that
then
there exists some reduced $v' \in \lang[\vertii_{0}\Verts]{\Ati} \setmin \lang[\vertii_1\Verts]{\Ati}$.

Indeed, if
$v \in \lang[\vertii_{0} \Verts]{\Ati} \setmin \lang[\vertii_1 \Verts]{\Ati}$
is not itself reduced, we may write $v = xaa^{-1}y$ where $xa$ is reduced and $a \in A^{\pm}$.
Then, if
$xa \notin \lang[\vertii_1\Verts]{\Ati}$, we can take $v'$ to be $xa$.
Otherwise, we can replace $v$ by $xy \in \lang[\vertii_{0}\Verts] {\Ati} \setminus \lang[\vertii_1\Verts] {\Ati}$ (note that $xa, xy \in \lang[\vertii_1\Verts] {\Ati}$ would imply $v = xaa^{-1}y \in \lang[\vertii_1\Verts] {\Ati}$ since $\Ati$ is inverse).
Iterating this procedure, we end up finding some reduced
$v' \in \lang[\vertii_{0}\Verts] {\Ati} \setminus \lang[\vertii_1\Verts] {\Ati}$.
Therefore we may assume that $v$ is itself reduced.

Thus there exists a walk $\gamma \colon \bp \xleadsto{v} \vertii$ in $\Ati$ reading the (reduced) word $v$.
We claim that there exists some walk
\smash{$\vertii \xleadsto{w} \bp$}
such that $vw$ is reduced.
Indeed, since every vertex of $\Ati$ has outdegree $> 1$, we can extend
$\walki$ as much as needed keeping its label reduced. But since~$\Ati$ is finite, we must reach a point when one of the new vertices has already appeared before in
the walk. Let $\verti$ be the first such repeated vertex;
then we have paths
\begin{equation*}
\bp \xleadsto{x} \verti \xleadsto{y} \verti
\end{equation*}
where $xy$ is reduced
and has $v$ as a prefix.
Now
$xyx^{-1} \in \lang[q_{0}\Verts] {\Ati}
\setminus
\lang[q_1\Verts] {\Ati}$
and is reduced by the minimality of $\verti$. Writing $xyx^{-1} = vw$, we get the desired $w$.

We would like to have $vwu$ cyclically reduced, but that may fail. To overcome this difficulty, we will prove the claim below.
\begin{clm} \label{clm: claim}
If $a,b \in A^{\pm} \cap \lang[\bp \Verts] {\Ati}$ are distinct, then $\lang{\Ati} \cap \Cred_n \cap a(A^{\pm})^*b^{-1} \neq \emptyset$.
\end{clm}
Once again, we may extend the arc $\bp \xarc{a\,} \vertii$ to a walk $\bp \xleadsto{x} \verti$ with reduced label, and assume that $\verti$ is the first repeated vertex within the added arcs; then we use the same backtracking technique as before to get some $y \in \lang{\Ati} \cap \Red_n \cap a(A^{\pm})^*$. If $y$ ends in $b^{-1}$, we are done; otherwise, we may extend the arc
\smash{$\bp \xarc{b\,} \vertii'$} to a walk
\smash{$\bp \xleadsto{x'} \verti'$} with reduced label, and assume that $\verti'$ is the first repeated vertex within the added arcs. Again, the previous backtracking technique provides some
$y' \in \lang{\Ati} \cap \Red_n \cap b(A^{\pm})^*b^{-1}$.
But then
$yy' \in \lang{\Ati} \cap \Cred_n \cap a(A^{\pm})^*b^{-1}$. Therefore Claim~\ref{clm: claim} holds.

Since every vertex of $\Ati$ has degree strictly greater than $1$, it follows from Claim~\ref{clm: claim} that there exist cyclically reduced words $z_1, z_2 \in \lang{\Ati}$ such that $z_1vwz_2u \in \lang[\bp q_1]{\Ati} \cap \Cred_n$.
However, we cannot ensure that $z_1vwz_2u \notin \lang[q_1 \Verts]{\Ati}$. Therefore we consider $z_1^{m!}vwz_2u \in \lang[\bp q_1]{\Ati} \cap \Cred_n$, where $m = \card \Verts$. Suppose that $z_1^{m!}vwz_2u \in \lang[q_1 \Verts] {\Ati}$.
In view of Lemma~\ref{lem: char cr meet-cocycles}, we get that $vwz_2u \in \lang[q_1 \Verts]{\Ati}$, contradicting $v \notin \lang[q_1 \Verts]{\Ati}$. Thus we may replace $v$ (respectively $w$) by $z_1^{m!}v$ (respectively $wz_2$) to assume that $vwu$ is cyclically reduced.

Clearly, $H \join \gen{vwu} = H \join \gen{u} = \Fn$. Since $v \notin \lang[q_1 \Verts] {\Ati}$ and $vwu$ is cyclically reduced, we have that $(vwu)^{2} \notin \lang[\bp \Verts] {\Ati}$. Hence $H \cap \gen{vwu} = \{ 1 \}$ and so $H$ is $\oplus$-cocyclic in case 1.

\textbf{Case 2:} Suppose now that Case 1 does not hold, that is
$\lang[q_i \Verts] {\Ati} = \lang[q_j \Verts] {\Ati}$ for all $i,j \in [0,k]$.
We claim that this is incompatible with our assumptions.
Since
$\bouquet{n} =
\red{\Ati + u} =
\overline{\Ati_{\!\!/\bp \shorteq q_1}}$, then
in view of Lemma~\ref{lem: red lang = red lang red},
$\Red_n \subseteq \rlang{\Ati_{\!\!/\bp \shorteq q_1}}$.

Let $\Ati'$ be the automaton obtained from $\Ati$ by adding the arcs $\bp \xarc{1\,} q_1$ and $q_1 \xarc{1\,} \bp$
(we may admit arcs labelled by the empty word in automata with all the obvious adaptations).
It is straightforward to see that
$\lang{\Ati_{\!\!/\bp \shorteq q_1}}
=
\lang{\Ati'}$. We show that
$\lang[\bp \Verts] {\Ati'} \subseteq \lang[\bp \Verts] {\Ati}$
by induction on the number $t$
%\R{of visits to the basepoint of the first walk (in $\Ati_{\!\!/\bp \shorteq \q_1}$)}/
of arcs labelled by $1$ appearing in a walk
\smash{$\bp \xleadsto{v} $ in $\Ati'$}. Then $v \in \lang[\bp \Verts] {\Ati}$ holds trivially for $t = 0$. Assume that $t > 0$ and that the claim holds for $t-1$. Let
\smash{$\verti \xarc{1\,} \vertii$}
be the last such arc occurring in the walk; then, we can split our walk into
\begin{equation*}
\bp \xleadsto{x} \verti \xarc{1\,} \vertii \xleadsto{y} \vertiii
\end{equation*}
where $\verti,\vertii \in \{ \bp, q_1\}$ and are different. Then
$y \in \lang[\vertii \Verts] {\Ati}
=
\lang[\verti \Verts]{\Ati}$
and so there exists a walk
\smash{$\verti \xleadsto{y} \vertiii'$}
in $\Ati$. Gluing this walk to
\smash{$\bp \xleadsto{x} \verti$}, we obtain a walk labelled by $v$ which has $t-1$ occurrences of arcs labelled by 1. By the induction hypothesis, we obtain a walk
\smash{$\bp \xleadsto{v} $ in $\Ati$}.
Thus, $\lang[\bp \Verts] {\Ati'} \subseteq \lang[\bp \Verts] {\Ati}$.

From the previous discussion we have that:
\begin{equation*}
\Red_n
\,\subseteq\,
\rlang{\Ati_{\!\!/\bp \shorteq q_1}}
\,=\,
\rlang{\Ati'}
\,\subseteq\,
\rlang[\bp \Verts] {\Ati'}
\,\subseteq\,
\rlang[\bp \Verts] {\Gamma}
\,\subseteq\,
\lang[\bp \Verts] {\Gamma}
\end{equation*}
(where the last inclusion follows from $\Ati$ being inverse)
contradicting the fact that $\Ati$ is incomplete (since $H$ has infinite index). Therefore Case 2 is excluded and the theorem is proved.
\end{proof}

So, we have just seen that, whenever the $\oplus$-corank is defined, $\jcrk{H} =1$ if and only if $\dcrk{H} =1$. This fact, together with the coincidence in the possible
ranges
for both coranks (see Lemmas~\ref{lem: join-corank bounds} and~\ref{lem: direct-corank bounds}), makes it natural to ask whether the same holds for any possible value of the $\oplus$-corank.

\begin{qu} \label{qu: direct-corank = join-corank}
Do the $\oplus$-corank and the $\join$-corank of a finitely generated subgroup $H$ $\Fn$ coincide whenever they are both defined (\ie when $\ind{\Fn}{H} = \infty$)?
\end{qu}

\section*{Acknowledgements}

Both authors were partially supported by CMUP (UID/MAT/00144/2019), which is funded by FCT (Portugal) with national (MCTES) and European structural funds through the programs FEDER, under the partnership agreement PT2020.

 % bibtex_version
% \bibliographystyle{plainnat}
%\bibliographystyle{plain}
 \bibliography{myBibtex}

%% biblatex_version
%\renewcommand*{\bibfont}{\small}
%\printbibliography

\end{document}